\newtheorem{theorem}{Theorem}[section]
\newtheorem*{theorem-non}{Theorem}
\newtheorem{lemma}[theorem]{Lemma}
\newtheorem{corollary}[theorem]{Corollary}
\newtheorem{proposition}[theorem]{Proposition}
\theoremstyle{definition}
\newtheorem{definition}[theorem]{Definition}
\newtheorem{example}[theorem]{Example}
\newtheorem{fact}[theorem]{Fact}
\theoremstyle{remark}
\newtheorem{remark}[theorem]{Remark}
\numberwithin{equation}{section}
\definecolor{gray}{rgb}{.5,.5,.5}
\definecolor{black}{rgb}{0,0,0}
\definecolor{blue}{rgb}{0,0,1}
\def\blue{\color{blue}}
\definecolor{red}{rgb}{1,0,0}
\definecolor{green}{rgb}{0,1,0}
\definecolor{yellow}{rgb}{1,1,.4}
\def\tot{\operatorname{tot}}
\newcommand{\crosspos}{
\begin{tikzpicture}[baseline=-2]
\draw[white,line width=1.5pt,double=black,double distance=.5pt] (0,0.2) -- (0.3,-0.1);
\draw[white,line width=1.5pt,double=black,double distance=.5pt] (0,-0.1) -- (0.3,0.2);
\end{tikzpicture}}
\newcommand{\crossposor}{
\begin{tikzpicture}[baseline=-2]
\draw[white,line width=1.5pt,double=black,double distance=.5pt] (0,0.2) -- (0.3,-0.1);
\draw[<-] (0,0.2) -- (0.3,-0.1);
\draw[white,line width=1.5pt,double=black,double distance=.5pt] (0,-0.1) -- (0.3,0.2);
\draw[->] (0,-0.1) -- (0.3,0.2);
\end{tikzpicture}}
\newcommand{\crosssmooth}{
\begin{tikzpicture}[baseline=-2]
\draw[white,line width=1.5pt,double=black,double distance=.5pt, rounded corners] (0,0.2) -- (.1,.05) -- (0,-0.1);
\draw[white,line width=1.5pt,double=black,double distance=.5pt, rounded corners] (0.3,-0.1) -- (.2,.05) -- (0.3,0.2);
\end{tikzpicture}}
\newcommand{\mirror}{\textsf{mirror}}
\newcommand{\merge}{\textsf{merge}}
\begin{document}

\title{Average signature and 4-genus of 2-bridge knots}

\author{Moshe Cohen}
\address{Mathematics Department, State University of New York at New Paltz, New Paltz, NY 12561}
\email{cohenm@newpaltz.edu}

\author{Adam M. Lowrance}
\address{Department of Mathematics and Statistics, Vassar College, Poughkeepsie, NY 12604}
\email{adlowrance@vassar.edu}

\author{Neal Madras}
\address{Department of Mathematics and Statistics, York University, Toronto, ON M3J 1P3}
\email{madras@yorku.ca}

\author{Steven Raanes}
\address{Department of Mathematics, The Ohio State University, Columbus, OH 43210}
\email{raanes.1@osu.edu}

\begin{abstract}
We show that the average or expected absolute value of the signatures of all 2-bridge knots with crossing number $c$ approaches $\sqrt{{2c}/{\pi}}$.  
Baader, Kjuchukova, Lewark, Misev, and Ray consider a model for 2-bridge knot diagrams indexed by diagrammatic crossing number $n$ and show that the average 4-genus is sublinear in $n$.  We build upon this result in two ways to obtain an upper bound for the average 4-genus of a 2-bridge knot:  our model is indexed by crossing number $c$ and gives a specific sublinear upper bound of $9.75c/\log c$.
\end{abstract}

\subjclass[2020]{57K10; 05A05; 60J10}
\keywords{rational knot; Markov chain}

\maketitle

\section{Introduction}
\label{sec:intro}

Many knot theory results read as follows:  here is an infinite family of knots whose one chosen knot invariant has some behavior and whose other chosen invariant has another behavior \cite{DiaErnThi,BriJen, LudEvaPaa, LowAltDist,BakMot,Mis,FucPurSte}.  In contrast, the present work is concerned with the average or expected behavior of a given knot with $c$ crossings, answering a different kind of question: we study the average signature and $4$-genus of the set of $2$-bridge knots with crossing number $c$.

One approach to  studying the typical behavior of a knot, not considered in this work, is to feed already-existing data about small knots into machine learning artificial intelligence models in order to see what connections can be found \cite{Hug:ml,machinelearning, LevHajSaz}.  Another approach, also not considered here, is to develop a random model for all knots from which this information can be computed directly \cite{EZHLN, EZ:survey, OwaTsv}.  However, it may be the case that certain models favor certain behaviors, as in \cite{McC}, and thus much attention is still needed for these efforts to be completely successful.

The approach of the current work may help to establish a baseline for the problem above by determining a point of comparison for random models:  do the behaviors of the random model match the behaviors of the set of all knots?  Certainly if the behaviors of the set of all knots could be easily understood, there would be no need for a random model.  Because this seems unlikely in general, we restrict our attention to the set of 2-bridge knots, for which we are able to better understand various behaviors.

We continue in the same spirit as \cite{Coh:lower, CohLow,CohLowURSI} by using a combinatorially rich method of tabulating $2$-bridge knots (also see \cite{Co:3-bridge, CoKr, CoEZKr} for further background).  One advantage of this setting is the abundance of recursive identities that underpin this work. Utilizing this setup, the first two authors computed in \cite{CohLow} the average Seifert genus of a 2-bridge knot, which was independently proven by \cite{SuzTra:genus} and confirmed experimentally by \cite{RayDiao}.  The first, second, and last authors of the present paper with others went on to show in \cite{CohLowURSI} that the distribution of genera for 2-bridge knots with crossing number $c$ approaches a normal distribution as $c$ approaches infinity.  The proof is technical in places, leading the authors to more questions than answers.  Does the result hold in general for all knots?  Does this follow from properties of genus like additivity?  
Do similar combinatorial random model results for other topological objects imply that 2-bridge knots can be seen as combinatorial? Do the distributions of other knot invariants over a fixed crossing number approach normal distributions as $c$ approaches infinity, and can we use these to say things about finite cases? 

Continuing in this vein, the second author with undergraduate students \cite{LowURSI:braids} and also Suzuki and Tran \cite{SuzTra:braids} independently computed the average braid index of 2-bridge knots.

\subsection{Signature and 4-genus}  
The \textit{signature} $\sigma(K)$ of the knot $K$, defined by Trotter \cite{Trotter}, is the difference between the number of positive and negative eigenvalues of $V+V^T$ where $V$ is any Seifert matrix of $K$.  Gordon and Rick Litherland (1953-2022) expressed the signature of a knot as the signature of the Goeritz matrix up to a diagrammatic `correction term' \cite[Theorem 2]{GorLit}.  Gallaspy and Jabuka  started \cite{GalJab} and Qazaqzeh, Al-Darabsah, and Quraan completed \cite{QazAlDQur} an analysis of this correction term  based on the parities of the number of crossings in each twist region.

Traczyk \cite{Tra} used the Gordon-Litherland method of computing knot signature to find a diagrammatic formula for the signature of alternating links. Specifically, if $D$ is an alternating diagram of a knot where $s_A$ is the number of circles in the all-$A$-smoothing of the knot diagram and where $c_+$ is the number of positively-oriented crossings, then $\sigma(K)=s_A - c_+ -1$, which we use here.  One benefit of using our approach below is that the orientations of the crossings can be easily determined, as in \cite{Coh:lower}, avoiding the difficulties of \cite{GalJab} and \cite{QazAlDQur}.

Dasbach and the second author continued to study signature for links with Turaev genus one \cite{DasLow:gT1}.  Ville and Soret \cite{SorVil} studied closures of certain random quasipositive 3-braids, including Lissajous toric knots, showing these have signature 0.  Dunfield and Tiozzo \cite{DunTio} studied closures of random positive 3-braids and are able to prove a central limit theorem on their signatures using Bj\"orklund-Hartnick's central limit theorem for quasimorphisms \cite{BjoHar}.

The \emph{4-genus} $g_4(K)$ of a knot $K$ is the minimum genus of a locally-flat surface embedded in the 4-ball whose boundary is the knot.  A knot is \emph{slice} if its $4$-genus is zero.  In recent years, much attention has been paid to the question of whether a knot is slice \cite{Pic:slice,Hom:slice,Lev:slice}.

Two inequalities involving the $4$-genus of a knot will be important in this article. First, Murasugi \cite{Mur:4genus} proved that half of the absolute value of the signature of a knot is a lower bound on the $4$-genus of the knot, that is, he showed 
\begin{equation}
    \label{eq:sigbound}
    \frac{|\sigma(K)|}{2} \leq g_4(K)
\end{equation}
for any knot $K$. The second inequality is an upper bound on the $4$-genus of a knot coming from the minimum crossing number $c(K)$ of a knot. By definition, the $4$-genus of a knot is bounded from above by the $3$-genus $g_3(K)$ of the knot, and applying Seifert's algorithm to a minimum crossing diagram of the knot implies $g_3(K)<\frac{c(K)}{2}$. Therefore for any knot $K$
\begin{equation}
    \label{eq:crossing}
    g_4(K) < \frac{c(K)}{2}.
\end{equation}

Recently Baader, Kjuchukova, Lewark, Misev, and Ray \cite{BKLMR} show that the average 4-genus of a 2-bridge knot diagram with $2n$ crossings is sublinear with respect to $n$.  In Section \ref{sec:4-genus}, we modify their proof techniques for our setting in order to say something more precise about this sublinearity.

\subsection{Results and Organization}
Let $\mathcal{K}(c)$ be the set of $2$-bridge knots of crossing number $c$ where only one of each chiral pair is an element of $\mathcal{K}(c)$. Define the \textit{average of the absolute value of the signature} $\overline{|\sigma|}(c)$ of the set of 2-bridge knots with crossing number $c$ by
\[\overline{|\sigma|}(c) = \frac{\sum_{K\in\mathcal{K}(c)} |\sigma(K)|}{|\mathcal{K}(c)|}.\]
Our first main theorem asymptotically computes $\overline{|\sigma|}(c)$.
\begin{theorem}
    \label{thm:avgsignature}
    The average absolute value of the signature $\overline{|\sigma|}(c)$  satisfies the limit 
    \[\lim_{c\to\infty}\left( \overline{|\sigma|}(c) - \sqrt{\frac{2c}{\pi}}\right) = 0.\]
\end{theorem}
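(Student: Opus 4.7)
The plan is to leverage the combinatorial model for $2$-bridge knots developed in \cite{CohLow,CohLowURSI} together with Traczyk's diagrammatic signature formula cited above. In that model, elements of $\mathcal{K}(c)$ (modulo chirality) are encoded by equivalence classes of sign sequences arising from continued-fraction expansions of the defining fraction $p/q$, and their enumeration is governed by a transfer matrix / Markov chain. The first step is to apply Traczyk's identity $\sigma(K) = s_A - c_+ - 1$ to an alternating $2$-bridge diagram to extract, after cancellation, a local formula $\sigma(K) = \sum_{i=1}^{c} X_i(s) + O(1)$, in which each $X_i$ takes values in a small set depending only on the entry $\epsilon_i$ of the sign sequence $s$ (and possibly its neighbors). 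The fact that in the sign-sequence model the orientation and hence $c_+$ is transparent \cite{Coh:lower} sidesteps the Gordon--Litherland correction-term difficulties discussed in \cite{GalJab,QazAlDQur}.

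Next, the problem reduces to a probabilistic one: estimate the average of $\bigl|\sum_i X_i(s)\bigr|$ under the uniform measure on $\mathcal{K}(c)$. The transfer-matrix / Perron--Frobenius framework used in \cite{CohLowURSI} to prove normality of the genus should yield here that the increments $X_i$ are asymptotically centered, bounded, and weakly correlated, so that $\sum_i X_i$ behaves like the endpoint $S_c$ of a simple symmetric random walk of length $c$. Combined with the exact identity
\[
\mathbf{E}|S_c| \;=\; \frac{c}{2^{c-1}}\binom{c-1}{\lfloor (c-1)/2\rfloor},
\]
Stirling's approximation gives $\mathbf{E}|S_c| = \sqrt{2c/\pi} + O(c^{-1/2})$, which is exactly the stated limit. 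The identification of each chiral pair as a single knot is harmless because chiral mirrors have opposite signatures but equal $|\sigma|$, and the handful of amphichiral $2$-bridge knots contribute only $O(1)$ to the sum.

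The main obstacle is producing an \emph{additive} error of $o(1)$ rather than the $o(\sqrt{c})$ error that one would get from a soft central limit theorem alone. Weak convergence of $\sigma/\sqrt{c}$ to a standard normal would only give $\overline{|\sigma|}(c) = \sqrt{2c/\pi} + o(\sqrt{c})$; to reach $o(1)$ one needs either a local central limit theorem with explicit error estimates for the underlying Markov chain, or --- more promisingly --- a combinatorial identification of the signature distribution over $\mathcal{K}(c)$ with that of a simple random walk of length $c$ up to bounded corrections, after which the Stirling asymptotics above transfer directly. Securing this sharp-form combinatorial reduction, rather than any soft probabilistic argument, is likely to be the delicate heart of the proof.
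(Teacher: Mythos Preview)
Your proposal is a strategy outline rather than a proof: you correctly identify that the crux is a ``sharp-form combinatorial reduction'' of the signature distribution to that of a simple random walk, but you explicitly leave that reduction undone. Without it, nothing in the proposal distinguishes the true $o(1)$ error from the $o(\sqrt{c})$ error you would get from a soft CLT, as you yourself note. So as written there is a genuine gap: the heart of the argument is named but not supplied.

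The paper fills exactly this gap, and in a way worth comparing to your sketch. Rather than working with a single crossing number, the paper pairs consecutive ones: over $T(2m+1)\cup T(2m+2)$ it proves the exact identity
\[
s(2m+1,\sigma)+s(2m+2,\sigma)=\binom{2m-1}{m-1+\sigma/2},
\]
so the signature on this doubled set is literally $S_{2m-1}+1$ for a simple symmetric walk $S_{2m-1}$ of length $2m-1$. From this one computes the two-row total $\sum|\sigma|=m\binom{2m}{m}$ exactly, and Wallis/Stirling gives the asymptotic. The remaining work is to descend from the pair $\{2m+1,2m+2\}$ back to a single $c$: a recursion of the form $\tot(2m+2)=2\tot(2m+1)+O\!\bigl(\binom{2m-1}{m}\bigr)$ pins down each $\tot(c)$ up to an error that vanishes after dividing by $|\mathcal{K}(c)|$; the palindromic correction is handled separately and is negligible for the same reason. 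Your transfer-matrix/local-CLT route could in principle reach the same conclusion, but the paper's exact binomial identity over paired crossing numbers is what makes the $o(1)$ error fall out cleanly without any analytic CLT machinery. If you want to complete your approach, that identity (or an equivalent exact match to a random-walk law) is the missing ingredient you must prove.
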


Define the \textit{average 4-genus} $\overline{g_4}(c)$ of the set of 2-bridge knots with crossing number $c$ by 
\[\overline{g_4}(c) = \frac{\sum_{K\in\mathcal{K}(c)} g_4(K)}{|\mathcal{K}(c)|}.\]
Our second main theorem gives an explicit sublinear upper bound for $\overline{g_4}(c)$.
\begin{theorem}
\label{thm:avg4genus}
For all $c\geq 3$, the average 4-genus $\overline{g_4}(c)$ is bounded above by $\displaystyle\frac{9.75c}{\log c}$ and hence is sublinear in the crossing number $c$.
\end{theorem}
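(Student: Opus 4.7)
The plan is to adapt the sublinearity argument of Baader--Kjuchukova--Lewark--Misev--Ray \cite{BKLMR} to our crossing-number indexed model and then to track constants closely enough to extract the explicit bound $9.75c/\log c$. Each $K \in \mathcal{K}(c)$ is encoded by a Conway sequence $(a_1,\ldots,a_k)$ with $\sum_i |a_i| = c$, and BKLMR isolate a finite family $\mathcal{P}$ of short ``slice subwords'' with the property that if the Conway sequence of $K$ contains an occurrence of some $P\in \mathcal{P}$, then a single band surgery converts $K$ into a 2-bridge knot whose Conway sequence is strictly shorter, at the cost of increasing the 4-genus by at most one. Iterating this reduction yields
\[g_4(K) \leq N(K) + g_4(K_{\mathrm{res}}),\]
where $N(K)$ counts the disjoint slice subwords removed and $K_{\mathrm{res}}$ is the residual 2-bridge knot on which we fall back on the trivial bound \eqref{eq:crossing}.

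First I would port this mechanism to the Markov-chain model of $\mathcal{K}(c)$ developed in \cite{Coh:lower,CohLow,CohLowURSI}, which is already the framework underlying Theorem~\ref{thm:avgsignature}. The key input is that any fixed pattern in $\mathcal{P}$ appears with positive asymptotic density along the Conway sequence of a uniformly chosen $K \in \mathcal{K}(c)$; this should follow from the same transfer-matrix and generating-function analysis used for the signature, since the underlying chain is irreducible and aperiodic and conditioning on $\sum|a_i| = c$ is a standard local-limit-theorem conditioning that preserves local frequencies up to lower-order error.

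Next I would partition the Conway word into disjoint blocks of length $L = L(c)$ and call a block \emph{good} if it contains at least one element of $\mathcal{P}$. Applying the one-band reduction inside each good block, the contribution to $g_4$ from good blocks is at most one per block and hence at most $c/L$, while the remaining bad blocks are absorbed through \eqref{eq:crossing}. Because the probability of a single block being bad decays geometrically in $L$, choosing $L$ a suitable multiple of $\log c$ makes the bad-block contribution $o(c/\log c)$. Summing the block contributions against the uniform measure on $\mathcal{K}(c)$ yields
\[\overline{g_4}(c) \leq C\,\frac{c}{\log c} + o\!\left(\frac{c}{\log c}\right),\]
and optimizing $L$ jointly with the choice of $\mathcal{P}$ is where the explicit constant $C \leq 9.75$ is to be extracted.

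The main obstacle will be the final constant-tracking step. BKLMR work with Conway words indexed by diagrammatic length $n$, in which successive entries are essentially independent; our indexing by the true crossing number $c$ introduces correlations that depress the slice-pattern density by an amount that has to be controlled quantitatively rather than only asymptotically. I expect the hardest part to be a careful comparison of the conditional and unconditional Markov models using the recursive identities in \cite{CohLow}, together with an explicit verification at small $c$, in order to promote the asymptotic constant to a universal bound that is valid for every $c \geq 3$.
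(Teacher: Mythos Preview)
Your description of the BKLMR mechanism is not accurate, and the alternative you sketch would not produce a sublinear bound. BKLMR do not work with a fixed finite family $\mathcal{P}$ of ``slice subwords'' each of which can be removed at cost one; rather, they (and the present paper) cut the diagram by saddle moves into a connect sum of many small summands, and then exploit the fact that $K\#(-\overline{K})$ is slice to cancel summands against their mirror images. The sublinearity comes from a \emph{random-walk cancellation}: among $t$ summands drawn from a fixed alphabet of size $\sim 2^s$, the expected number left unpaired with their mirrors is $O(\sqrt{2^s t})$, not $O(t)$. In the paper this is carried out on $T(2m+1)\cup T(2m+2)$ via the bijection with $\{\sigma_1,\sigma_2^{-1}\}^{2m-1}$, a Markov-chain analysis of orientations, and a choice $s=\lceil\log_{10} c\rceil$; the constant $9.75$ emerges from bounding five explicit summands and then passing from the two-row average to the single-crossing-number average.

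Your block argument has a structural gap. If $\mathcal{P}$ is a fixed finite family of patterns, then one band surgery on an occurrence of $P\in\mathcal{P}$ can shorten the Conway sequence by at most a \emph{bounded} amount, not by $L\sim\log c$. Hence ``one per good block'' is not justified: disposing of a block of length $L$ would require $\Theta(L)$ surgeries, giving only a linear bound $g_4(K)\lesssim c$. Allowing the patterns to grow with $c$ destroys the ``positive asymptotic density'' claim. What is missing is precisely the mirror-pairing idea: without a mechanism that makes \emph{most} summands contribute zero to $g_4$, you cannot beat linear growth. To repair the argument you would need to replace the good/bad block dichotomy by the connect-sum decomposition and the lattice random walk of Proposition~\ref{prop:newii} and Theorem~\ref{thm:taxicab}.
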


 Theorems \ref{thm:avgsignature} and \ref{thm:avg4genus} and Inequality \eqref{eq:sigbound} together imply the following upper and lower bounds for the average 4-genus. For any $\varepsilon>0$ there is a crossing number $c_0$ such that for all $c\geq c_0$
\[ \sqrt{\frac{c}{2\pi}}-\varepsilon < \overline{g_4}(c)\leq \frac{9.75c}{\log c}.
\]
The authors expect that the $\varepsilon$ term can be removed from the above inequality.

Together with Theorem 1.1 in \cite{CohLow} by the first two authors on the average 3-genus $\overline{g_3}(c)$, which can be defined similarly, Theorem \ref{thm:avg4genus} immediately implies the following.

\begin{corollary}
\label{cor:limit}
The limit of the average 4-genus over the average 3-genus is
$$\lim_{{c\to\infty}} \frac{\overline{g_4}(c)}{\overline{g_3}(c)}=0.$$
\end{corollary}

Baader, Kjuchukova, Lewark, Misev, and Ray \cite{BKLMR} prove in their Theorem 1 a version of this result indexed by the number of crossings $n$ in the diagram and conjecture that it holds when indexed by the crossing number $c$, which Corollary \ref{cor:limit} achieves.

A less major theorem but perhaps one of the most surprising results of the paper is Theorem \ref{thm:binomials}, which states that over the set $T(2m+1)\cup T(2m+2)$, the number of knot diagrams with a given signature is a binomial coefficient $\displaystyle\binom{2m-1}{k}$.  Perhaps this gives more evidence towards some of the big theoretical questions posed earlier in this section.

This paper is organized as follows. Section \ref{sec:background} provides background on the construction of the set $T(c)$ of words representing 2-bridge knots with a given crossing number $c$ from work by the first two authors \cite{CohLow}. Section \ref{sec:signature} considers the number $s(c,\sigma)$ of words in $T(c)$ corresponding to a knot with signature $\sigma$.  Two useful recursions are given in Lemmas \ref{lem:signaturerecursion} and \ref{lem:signaturerecursion2}, yielding the data in Table \ref{tab:sig}.  Section \ref{sec:average} uses these results to give the proof of Theorem \ref{thm:avgsignature}, which relies on several technical lemmas that are given and proved afterwards in Subsection \ref{subsec:technical}. Section \ref{sec:4-genus} establishes Theorem \ref{thm:avg4genus} on 4-genus. Subsection \ref{subsec:newmodel} sets up a slightly different random model for $T(2m+1)\cup T(2m+2)$ and adapts useful results from \cite{BKLMR} to this setting to build a cobordism from the knot to a connected sum of knots.  Subsection \ref{subsec:random} introduces a random walk model for this connected sum.  Subsection \ref{subsec:analysis} summarizes an upper bound for 4-genus and then provides the analysis of this upper bound in order to prove the theorem.

\subsection{Acknowledgements}  
The third author was supported in part by a Discovery Grant from NSERC Canada. We are grateful to the Banff International Research Station for hosting the first and third authors at the Knot Theory Informed by Random Models and Experimental Data (24w5217) workshop and to the organizers of the workshop. Thanks also to Robert Adler and Matt Kahle for helpful conversations.

\section{Background}
\label{sec:background}

In this section, we provide background on 2-bridge knots with a particular focus on the set $T(c)$ whose construction is given in Definition \ref{def:tc}.

Ernst and Sumners computed the number of 2-bridge knots with chiral pairs \textit{not} counted separately.
\begin{theorem}[Ernst-Sumners \cite{ErnSum}, Theorem 5]
\label{thm:ernstsumners}
The number $|\mathcal{K}(c)|$ of 2-bridge knots with $c$ crossings where chiral pairs are \emph{not} counted separately is given by
\[
|\mathcal{K}(c)| = 
\begin{cases}
\frac{1}{3}(2^{c-3}+2^{\frac{c-4}{2}}) & \text{ for }4 \leq c\equiv 0 \text{ mod }4,\\
\frac{1}{3}(2^{c-3}+2^{\frac{c-3}{2}}) & \text{ for }5\leq c\equiv 1 \text{ mod }4, \\
\frac{1}{3}(2^{c-3}+2^{\frac{c-4}{2}}-1) & \text{ for }6 \leq c\equiv 2 \text{ mod }4, \text{ and}\\
\frac{1}{3}(2^{c-3}+2^{\frac{c-3}{2}}+1) & \text{ for }3\leq c\equiv 3 \text{ mod }4.
\end{cases}
\]
\end{theorem}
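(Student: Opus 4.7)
My plan is to derive the count from Schubert's classification of $2$-bridge knots via continued fractions, and then to enumerate equivalence classes under Schubert's inversion relation together with chirality, using Burnside's lemma and a parity analysis that produces the four cases of $c \bmod 4$.

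By Schubert's theorem, unoriented $2$-bridge knots are in bijection with fractions $p/q$ with $p$ odd, $0 < q < p$, $\gcd(p,q) = 1$, modulo the identification $q \sim q^{-1} \pmod{p}$. The mirror of $K(p/q)$ is $K(p/(p-q))$, so counting chiral pairs only once identifies $q$ with $-q \pmod{p}$ as well, and $|\mathcal{K}(c)|$ counts orbits of the Klein four-group generated by $q \mapsto q^{-1}$ and $q \mapsto -q$ acting on $(\mathbb{Z}/p)^\times$, summed over all relevant $p$. I would parametrize each pair $(p, q)$ by its continued fraction $p/q = [a_1, \ldots, a_n]$ with $a_i \geq 1$ and $a_n \geq 2$, so that $c = a_1 + \cdots + a_n$ is the crossing number (using that every $2$-bridge knot admits an alternating minimum crossing diagram whose twist regions have lengths $a_i$). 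Under this parametrization, $q \mapsto q^{-1}$ becomes reversal of the composition, and $q \mapsto -q$ becomes a standard boundary-adjustment involution.

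There are exactly $2^{c-2}$ admissible compositions of $c$ in this normal form. Using the continuant recursion $p_j = a_j p_{j-1} + p_{j-2}$ reduced modulo $2$, one can show that asymptotically a proportion $\tfrac{2}{3}$ of these compositions have odd continuant $p$ (and hence correspond to genuine knots rather than $2$-bridge links), producing the main term $\tfrac{2}{3} \cdot \tfrac{2^{c-2}}{4} = \tfrac{2^{c-3}}{3}$ after applying Burnside's lemma to the Klein four action restricted to knot-giving compositions. I would make this rigorous by computing the fixed-point sets of each non-identity element separately: palindromic compositions for the reversal, negation-symmetric compositions for the mirror involution, and those fixed by both. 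Parity constraints on the central entry of a palindromic composition produce the subleading exponential terms $2^{(c-3)/2}$ when $c$ is odd and $2^{(c-4)/2}$ when $c$ is even, while exceptional orbits of size strictly less than $4$ (such as the near-constant composition $(1,1,\ldots,1,2)$ and its reverse) contribute the $\pm 1$ corrections in the cases $c \equiv 2, 3 \pmod{4}$.

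The main obstacle I anticipate is the continuant parity analysis: showing precisely that the proportion of odd-continuant compositions of $c$, together with its restriction to the palindromic and negation-symmetric fixed-point sets, yields exactly the stated closed-form expressions rather than merely the correct leading orders. This can be handled by a transfer-matrix argument on the continuant recursion reduced modulo $2$, whose eigenvalues produce the $\tfrac{2^{c-3}}{3}$ leading behavior from a geometric-series identity. The remaining bookkeeping is then a routine parity check across the four residue classes of $c \bmod 4$.
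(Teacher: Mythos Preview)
The paper does not prove this theorem: it is quoted verbatim from Ernst and Sumners \cite{ErnSum} as background (Theorem~\ref{thm:ernstsumners} in Section~\ref{sec:background}) and is used only as input to later estimates. There is therefore no proof in the paper to compare your proposal against.

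On its own merits, your outline is the standard route one would expect---Schubert's classification, Burnside on the Klein four-group generated by $q\mapsto q^{-1}$ and $q\mapsto -q$, and a parity analysis of continuants to separate knots from links---and the leading term $2^{c-3}/3$ falls out exactly as you describe. But the proposal is a sketch with the hard step deferred, and a few of the soft spots deserve flagging. Your description of the mirror involution on continued fractions as ``a standard boundary-adjustment involution'' is vague; in the all-positive normal form $[a_1,\dots,a_n]$ the map $q\mapsto -q$ does not act by a simple symmetry, and one typically passes to a signed or even-length normal form to make it explicit. The constraint $a_n\geq 2$ also breaks the left--right symmetry of compositions, so your palindrome count under reversal must accommodate $a_1\geq 2$ as well. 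Finally, attributing the $\pm 1$ corrections to a single explicit short orbit is not quite right: they emerge from the interaction of the parity offset in the transfer-matrix count with the fixed-point contributions, and pinning them down is exactly the ``routine parity check'' you have postponed. None of this is fatal, but as written the proposal stops just short of the actual computation.
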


Instead of considering $\mathcal{K}(c)$ directly, we consider the set $T(c)$ of partially-double counted 2-bridge knots with $c$ crossings, where chiral pairs are not counted separately.

\begin{definition}
\label{def:tc}
\cite{CohLow, CohLowURSI}
Define the \emph{partially double-counted set $T(c)$ of $2$-bridge words with crossing number $c$} as follows. Each word in $T(c)$ is a word in the symbols $\{+,-\}$. If $c$ is odd, then a word $w$ is in $T(c)$ if and only if it is of the form
\[
(+)^{\varepsilon_1}(-)^{\varepsilon_2}(+)^{\varepsilon_3}(-)^{\varepsilon_4}\ldots(-)^{\varepsilon_{c-1}}(+)^{\varepsilon_c}, \]
where $\varepsilon_i\in\{1,2\}$ for $i\in\{1,\ldots,c\}$, $\varepsilon_1=\varepsilon_c=1$, and the length of the word $\ell=\sum_{i=1}^{c}\varepsilon_i \equiv 1$ mod $3$. Similarly, if $c$ is even, then a word $w$ is in $T(c)$ if and only if it is of the form
\[(+)^{\varepsilon_1}(-)^{\varepsilon_2}(+)^{\varepsilon_3}(-)^{\varepsilon_4}\ldots(+)^{\varepsilon_{c-1}}(-)^{\varepsilon_c},\]
where $\varepsilon_i\in\{1,2\}$ for $i\in\{1,\ldots,c\}$, $\varepsilon_1=\varepsilon_c=1$, and the length of the word $\ell=\sum_{i=1}^{c}\varepsilon_i \equiv 1$ mod $3$. 

A \textit{run} in a word $w\in T(c)$ is a subword of the form $(+)^{\varepsilon_i}$ or $(-)^{\varepsilon_i}$. By construction, each word in $T(c)$ has exactly $c$ runs.
\end{definition}

As in \cite{Coh:lower}, each word $w\in T(c)$ produces an alternating diagram $D_w$ consisting of two horizontal rows of crossings capped off with a plat closure. The runs  $(+)^1$ and $(-)^2$ of $w$ correspond to $\sigma_1$ crossings $\tikz[baseline=.6ex, scale = .4]{
\draw (0,0) -- (1,1);
\draw (0,1) -- (.3,.7);
\draw (.7,.3) -- (1,0);
}
~$ in the lower row, and the runs $(-)^1$ and $(+)^2$ correspond to $\sigma_2^{-1}$ crossings $\tikz[baseline=.6ex, scale = .4]{
\draw (0,0) -- (.3,.3);
\draw (.7,.7) -- (1,1);
\draw (0,1) -- (1,0);
}
~$ in the higher row. The $2$-bridge knot with diagram $D_w$ is denoted by $K_w$.

\begin{example}
\label{ex:word}
The word $+--++--++-$ in $T(6)$ corresponds to the braid word $\sigma_1^2\sigma_2^{-1}\sigma_1\sigma_2^{-2}$ and gives the alternating diagram shown in Figure \ref{fig:ex1}.
\end{example}

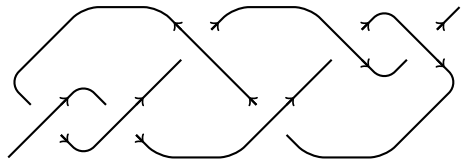
\begin{figure}[h!]
\begin{tikzpicture}[rounded corners = 2mm, thick]
    \draw (0,0) -- (1,1) -- (1.3,.7);
\draw (1.7,.3) -- (2,0) -- (3,0) -- (4.3,1.3);
\draw (4.7,1.7) -- (5,2) -- (6,1) -- (5,0) -- (4,0) -- (3.7,.3);
\draw (3.3,.7) -- (2,2) -- (1,2) -- (0,1) -- (.3,.7);
\draw (.7,.3) -- (1,0) -- (2.3,1.3);
\draw (2.7,1.7) -- (3,2) --(4,2) -- (5,1) -- (5.3,1.3);
\draw (5.7,1.7) -- (6,2);

\draw[->] (.7,.7)--(.8,.8);
\draw[->] (.7,.3) -- (.8,.2);
\draw[->] (1.7,.7) -- (1.8,.8);
\draw[->] (1.7,.3) -- (1.8,.2);
\draw[->] (2.3,1.7) -- (2.2,1.8);
\draw[->] (2.7,1.7) -- (2.8,1.8);
\draw[->] (3.3,.7) -- (3.2,.8);
\draw[->] (3.7,.7) -- (3.8,.8);
\draw[->] (4.7,1.7) -- (4.8,1.8);
\draw[->] (4.7,1.3) -- (4.8,1.2);
\draw[->] (5.7,1.7) -- (5.8,1.8);
\draw[->] (5.7,1.3) -- (5.8,1.2);

\end{tikzpicture}
\caption{The alternating diagram $D_w$ obtained from the word $+--++--++-$. Even though we do not draw it, the open strands on the left and right are understood to be connected.}
\label{fig:ex1}
\end{figure}

When $c$ is odd, a word $w\in T(c)$ is $\textit{palindromic}$ if $w$ and the reverse of $w$ are the same word, and when $c$ is even, $w$ is palindromic if $w$ is equal to the word obtained by reversing $w$ and exchanging all $+$'s and $-$'s. Let $T_p(c)$ be the subset of palindromic words in $T(c)$. The word $w$ in Example \ref{ex:word} is palindromic.

The next theorem describes the relationship between the set $\mathcal{K}(c)$ of 2-bridge knots and the set $T(c)$.  The statement below  appears as Theorem 2.5 in \cite{CohLow}. It follows from Lemma 2.20 in Cohen and Krishnan \cite{CoKr}, whose proof uses the work of Schubert \cite{Sch} and Koseleff and Pecker \cite{KosPec4}. A logically equivalent version of this result without the $T(c)$ notation can be found by combining Lemma 2.1, Assumption 2.2, and Remark 2.3 in Cohen \cite{Coh:lower}. 

\begin{theorem}[\cite{CohLow}]
\label{thm:list}
    Each knot in $\mathcal{K}(c)$ is represented by one or two words in $T(c)$. A knot is represented by exactly one word $w$ in ${T(c)}$ if and only if $w$ is palindromic.
\end{theorem}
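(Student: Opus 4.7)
The plan is to invoke Schubert's classification of 2-bridge knots and translate it into the combinatorial language of $T(c)$. Schubert's theorem states that the 2-bridge knot $K(p/q)$ is equivalent to $K(p'/q')$ as an unoriented knot (with chiral pairs identified) if and only if $p=p'$ and $q^{\pm 1}\equiv \pm q' \pmod{p}$, giving a group of up to four symmetries: identity, inversion, negation, and negation-inversion of $q$ modulo $p$.

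First I would establish surjectivity of the map $w\mapsto K_w$ from $T(c)$ to $\mathcal{K}(c)$. Each $w\in T(c)$ reads off a continued fraction $[\varepsilon_1,\varepsilon_2,\ldots,\varepsilon_c]$ with $\varepsilon_i\in\{1,2\}$ and hence a reduced fraction $p/q$, and the 4-plat closure $D_w$ yields $K(p/q)$. Conversely, given a 2-bridge knot with fraction $p/q$, one uses the standard continued fraction identities (e.g.\ $[\ldots,a,1]=[\ldots,a+1]$ and the expansion $a=1+1+\cdots+1$ interpreted carefully through negative-regular conversions) to produce a representation with every partial quotient in $\{1,2\}$ and total length exactly equal to the crossing number $c$, with $\varepsilon_1=\varepsilon_c=1$, and with $\ell=\sum \varepsilon_i\equiv 1\pmod{3}$. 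The modular condition on $\ell$ is precisely what selects one representative from each chirality orbit.

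Next, the key observation is that reversing a word in $T(c)$ (and, when $c$ is even, additionally swapping $+\leftrightarrow -$ so that the result still starts with $+$) produces another word in $T(c)$ that represents the same knot. This follows from the classical identity that reversing the continued fraction $[\varepsilon_1,\ldots,\varepsilon_c]$ gives $p/q^{*}$ with $qq^{*}\equiv (-1)^{c+1}\pmod{p}$, so $q^{*}\equiv \pm q^{-1}\pmod p$, which is one of Schubert's four equivalences. Combined with the chirality identification built into $\mathcal{K}(c)$ and with the condition $\ell\equiv 1\pmod 3$ eliminating the mirror symmetry $q\mapsto -q$ at the level of words, the net effect is that the only nontrivial equivalence on $T(c)$ producing the same knot is reversal.

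The main obstacle is verifying that the constraints $\varepsilon_1=\varepsilon_c=1$ and $\ell\equiv 1\pmod 3$ really cut Schubert's fourfold equivalence down to just the reversal equivalence. Concretely, one must check that of the three fractions $-q, q^{-1}, -q^{-1}\pmod p$ equivalent to $q$, only the value $\pm q^{-1}$ admits a continued fraction expansion of length $c$ with entries in $\{1,2\}$ satisfying the boundary and mod-$3$ conditions; the negation $q\mapsto -q$ corresponds to a sign flip of the braid word that lands outside $T(c)$, and the composed symmetry is handled by combining this sign flip with reversal. Once this bookkeeping is in place, the conclusion of Theorem~\ref{thm:list} is immediate: if $K_{w_1}=K_{w_2}$ with $w_1,w_2\in T(c)$, then $w_2\in\{w_1,\mathrm{rev}(w_1)\}$, and these coincide precisely when $w_1$ is palindromic.
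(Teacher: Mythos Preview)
The paper does not prove this theorem at all: it is quoted from the earlier paper \cite{CohLow} of two of the authors, so there is no ``paper's own proof'' to compare against. Your overall plan---invoke Schubert's classification, identify which of the four Schubert symmetries survive the normalizations defining $T(c)$, and conclude that only word reversal remains---is exactly the right strategy, and is essentially how the result is obtained in \cite{CohLow}.

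There is, however, a concrete error in your execution. You write that ``each $w\in T(c)$ reads off a continued fraction $[\varepsilon_1,\varepsilon_2,\ldots,\varepsilon_c]$ with $\varepsilon_i\in\{1,2\}$.'' This misidentifies the continued fraction. The $\varepsilon_i$ are the run lengths in the $\pm$-word, and by the paper's dictionary each \emph{run} becomes a \emph{single} crossing ($\sigma_1$ or $\sigma_2^{-1}$), not a twist region. The continued fraction entries for the Schubert fraction $p/q$ are the sizes of the twist regions, i.e.\ the exponents when you collect consecutive $\sigma_1$'s and consecutive $\sigma_2^{-1}$'s in the braid word. For instance, the unique word $+{-}{-}+\in T(3)$ has $(\varepsilon_1,\varepsilon_2,\varepsilon_3)=(1,2,1)$ but braid word $\sigma_1^3$, giving the trefoil $K(3/1)$; the continued fraction $[1,2,1]=4/3$ is not the trefoil fraction. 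Similarly, the word in Example~\ref{ex:word} has $\varepsilon=(1,2,2,2,2,1)$ but braid word $\sigma_1^2\sigma_2^{-1}\sigma_1\sigma_2^{-2}$, so the relevant continued fraction has entries $(2,1,1,2)$, not $(1,2,2,2,2,1)$.

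This matters because your later claims---that reversing the word inverts $q$ modulo $p$, and that the mod-$3$ condition on $\ell$ kills the mirror symmetry $q\mapsto -q$---are argued via the continued fraction, so they need to be reproved with the correct identification. Once you pass to the braid word and its associated continued fraction, the reversal statement does go through (reversing the braid word reverses the twist-region sequence, which is the classical $q\mapsto q^{-1}$ identity), and the chirality/mod-$3$ bookkeeping can be recovered, but you have not actually done either of these steps. As written, the proposal is a correct outline with one wrong technical ingredient and the ``main obstacle'' left as an assertion.
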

Theorem \ref{thm:list} implies that $2|\mathcal{K}(c)|=|T(c)|+|T_p(c)|$.

The leftmost crossing in the alternating diagram $D_w$ is always oriented horizontally and to the right. This determines the orientations of all the other crossings. The first author \cite{Coh:lower} determined the orientations of all the crossings directly from the word $w\in T(c)$, as indicated in Table \ref{tab:orientations}.
\begin{table}[h]
\begin{tikzpicture}[scale=.8]

\draw (0,.25) node {Possible};
\draw (0,-.25) node {orientation(s)};
\draw (0,1.5) node {Position mod 3};
\draw (0,2.5) node {Run in $T(c)$};
\begin{scope}[scale=.5,baseline=0, xshift=5cm, yshift = -1cm]
    \draw[->] (0,0) -- (1,1);
    \draw (0,1) -- (0.3,0.7);
    \draw[->] (0.7,0.3) -- (1,0);
    \draw [->] (1,2) -- (0,2);
\end{scope}

\draw (2.75,1.5) node{1};
\draw (2.75,2.5) node{$+$};

\begin{scope}[scale=.5,baseline=0, xshift=9cm, yshift = -1cm]
    \draw[<-] (0,0) -- (1,1);
    \draw (0,1) -- (0.3,0.7);
    \draw[->] (0.7,0.3) -- (1,0);
    \draw [<-] (1,2) -- (0,2);

    \draw (2,1) node{or};

    \begin{scope}[xshift = 3cm]
    \draw[->] (0,0) -- (1,1);
    \draw[<-] (0,1) -- (0.3,0.7);
    \draw (0.7,0.3) -- (1,0);
    \draw [<-] (1,2) -- (0,2);
    \end{scope}
\end{scope}

\draw (5.5,1.5) node{2 or 3};
\draw (5.5,2.5) node{$+$};

\begin{scope}[xshift=5.5cm]
\begin{scope}[scale=.5,baseline=0, xshift=5cm, yshift = -1cm]
    \draw[->] (0,2) -- (1,1);
    \draw (0,1) -- (0.3,1.3);
    \draw[->] (0.7,1.7) -- (1,2);
    \draw[->] (1,0) -- (0,0);
\end{scope}

\draw (2.75,1.5) node{1};
\draw (2.75,2.5) node{$-$};
\end{scope}

\begin{scope}[xshift = 5.5cm]
\begin{scope}[scale=.5,baseline=0, xshift=9cm, yshift = -1cm]
    \draw[->] (0,2) -- (1,1);
    \draw[<-] (0,1) -- (0.3,1.3);
    \draw (0.7,1.7) -- (1,2);
    \draw[<-] (1,0) -- (0,0);

    \draw (2,1) node{or};

    \begin{scope}[xshift = 3cm]
    \draw[<-] (0,2) -- (1,1);
    \draw (0,1) -- (0.3,1.3);
    \draw[->] (0.7,1.7) -- (1,2);
    \draw[<-] (1,0) -- (0,0);
    \end{scope}
\end{scope}

\draw (5.5,1.5) node{2 or 3};
\draw (5.5,2.5) node{$-$};
\end{scope}
\draw (-2,-1) rectangle (12.75,3);
\draw (-2,2) -- (12.75,2);
\draw (-2,1) -- (12.75,1);
\draw (1.9,-1) -- (1.9,3);
\draw (3.75,-1) -- (3.75,3);
\draw (7.25,-1) -- (7.25,3);
\draw (9.25,-1) -- (9.25,3);

\begin{scope}[yshift = -4.5cm]
\draw (0,.25) node {Possible};
\draw (0,-.25) node {orientation(s)};
\draw (0,1.5) node {Positions mod 3};
\draw (0,2.5) node {Run in $T(c)$};
\begin{scope}[scale=.5,baseline=0, xshift=5cm, yshift = -1cm]
    \draw[->] (0,2) -- (1,1);
    \draw (0,1) -- (0.3,1.3);
    \draw[->] (0.7,1.7) -- (1,2);
    \draw[->] (1,0) -- (0,0);
\end{scope}

\draw (2.75,1.5) node{2-3};
\draw (2.75,2.5) node{$++$};

\begin{scope}[scale=.5,baseline=0, xshift=9cm, yshift = -1cm]
    \draw[->] (0,2) -- (1,1);
    \draw[<-] (0,1) -- (0.3,1.3);
    \draw (0.7,1.7) -- (1,2);
    \draw[<-] (1,0) -- (0,0);

    \draw (2,1) node{or};

    \begin{scope}[xshift = 3cm]
    \draw[<-] (0,2) -- (1,1);
    \draw (0,1) -- (0.3,1.3);
    \draw[->] (0.7,1.7) -- (1,2);
    \draw[<-] (1,0) -- (0,0);
    \end{scope}
\end{scope}

\draw (5.5,1.5) node{3-1 or 1-2};
\draw (5.5,2.5) node{$++$};

\begin{scope}[xshift=5.5cm]
\begin{scope}[scale=.5,baseline=0, xshift=5cm, yshift = -1cm]
    \draw[->] (0,0) -- (1,1);
    \draw (0,1) -- (0.3,0.7);
    \draw[->] (0.7,0.3) -- (1,0);
    \draw [->] (1,2) -- (0,2);
\end{scope}

\draw (2.75,1.5) node{ 2-3};
\draw (2.75,2.5) node{$--$};
\end{scope}

\begin{scope}[xshift = 5.5cm]
\begin{scope}[scale=.5,baseline=0, xshift=9cm, yshift = -1cm]
    \draw[<-] (0,0) -- (1,1);
    \draw (0,1) -- (0.3,0.7);
    \draw[->] (0.7,0.3) -- (1,0);
    \draw [<-] (1,2) -- (0,2);

    \draw (2,1) node{or};

    \begin{scope}[xshift = 3cm]
    \draw[->] (0,0) -- (1,1);
    \draw[<-] (0,1) -- (0.3,0.7);
    \draw (0.7,0.3) -- (1,0);
    \draw [<-] (1,2) -- (0,2);
    \end{scope}
\end{scope}

\draw (5.5,1.5) node{3-1 or 1-2};
\draw (5.5,2.5) node{$--$};
\end{scope}
\draw (-2,-1) rectangle (12.75,3);
\draw (-2,2) -- (12.75,2);
\draw (-2,1) -- (12.75,1);
\draw (1.9,-1) -- (1.9,3);
\draw (3.75,-1) -- (3.75,3);
\draw (7.25,-1) -- (7.25,3);
\draw (9.25,-1) -- (9.25,3);

\end{scope}

\end{tikzpicture}
\caption{ Orientations of crossings in the alternating diagram associated with a word in $T(c)$.}
\label{tab:orientations}
\end{table}

Our main results take advantage of the underlying combinatorial structure of the set $T(c)$. The first result we mention highlighting this combinatorial structure gives the size of $T(c)$.

\begin{proposition}
\cite{CohLow}
The number $t(c) = |T(c)| = \frac{2^{c-2} - (-1)^c}{3}$ of words in the set $T(c)$ is the Jacobsthal number $J(c-2)$ \cite{OEIS1045} and satisfies the same recursive formula $t(c)=t(c-1)+2t(c-2)$ and the same initial values $t(2)=J(0)=0$ and $t(3)=J(1)=1$ as the Jacobsthal sequence.
\end{proposition}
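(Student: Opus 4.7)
The plan is to recast $T(c)$ as a constrained set of $\{1,2\}$-valued sequences and count them directly. Since the alternating sign pattern and endpoint values $\varepsilon_1 = \varepsilon_c = 1$ are forced by Definition \ref{def:tc}, the assignment $w \mapsto (\varepsilon_2, \ldots, \varepsilon_{c-1})$ is a bijection between $T(c)$ and the set
\[
S(n) = \{(a_1,\ldots,a_n) \in \{1,2\}^n : a_1 + \cdots + a_n \equiv 2 \pmod 3\}
\]
with $n = c-2$; the residue $2$ arises by subtracting the fixed contribution $\varepsilon_1 + \varepsilon_c = 2$ from the required total $\sum \varepsilon_i \equiv 1 \pmod 3$.

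Next I would count $|S(n)|$ via a roots-of-unity filter. Let $\omega = e^{2\pi i/3}$. Since the indicator of $s \equiv 2 \pmod 3$ equals $\tfrac{1}{3}\sum_{j=0}^{2}\omega^{j(s-2)}$, interchanging the sum over sequences with the sum over $j$ and using the factorization $\sum_{a \in \{1,2\}} \omega^{j a} = \omega^j + \omega^{2j}$ yields
\[
|S(n)| = \frac{1}{3}\sum_{j=0}^{2}\omega^{-2j}\,(\omega^j + \omega^{2j})^n.
\]
The $j=0$ summand is $2^n$. For $j \in \{1,2\}$ one has $\omega^j + \omega^{2j} = -1$, and the two summands combine as $(\omega^{-2}+\omega^{-4})(-1)^n = (\omega+\omega^2)(-1)^n = -(-1)^n$. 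Thus
\[
t(c) = |S(c-2)| = \frac{2^{c-2} - (-1)^{c-2}}{3} = \frac{2^{c-2} - (-1)^c}{3}.
\]

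Finally I would verify the Jacobsthal recursion and initial conditions directly from the closed form. Using $(-1)^{c-1} = -(-1)^c$ and $(-1)^{c-2} = (-1)^c$, substitution gives
\[
t(c-1) + 2t(c-2) = \frac{(2^{c-3}+(-1)^c) + (2^{c-3} - 2(-1)^c)}{3} = \frac{2^{c-2} - (-1)^c}{3} = t(c).
\]
The base cases $t(2) = (1-1)/3 = 0 = J(0)$ and $t(3) = (2+1)/3 = 1 = J(1)$ agree with the Jacobsthal initial values, so $t(c) = J(c-2)$ by induction. There is no substantial obstacle here: the only real content is the parametrization of $T(c)$ by $\{1,2\}$-sequences with a mod-$3$ sum constraint, after which the enumeration is routine.
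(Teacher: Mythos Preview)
Your argument is correct. The bijection $w\mapsto(\varepsilon_2,\ldots,\varepsilon_{c-1})$ is exactly what Definition~\ref{def:tc} provides, the roots-of-unity filter is carried out cleanly, and the recursion and initial values follow immediately from the closed form.

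The paper does not supply its own proof of this proposition; it is quoted from \cite{CohLow}. The argument implicit in that source, and summarized in the present paper via Table~\ref{tab:bijection}, is bijective: one partitions $T(c)$ according to the last three runs into four pieces $T_1(c),\ldots,T_4(c)$, and the replacement rules in Table~\ref{tab:bijection} give bijections $T_1(c)\cup T_2(c)\to T(c-1)$ and $T_3(c),T_4(c)\to T(c-2)$, from which $t(c)=t(c-1)+2t(c-2)$ follows directly; the closed form is then read off from the Jacobsthal recursion. Your route inverts this order: you obtain the closed form first by a direct count and then verify the recursion algebraically. Your approach is shorter and entirely self-contained, but it does not expose the combinatorial structure of $T(c)$ that the paper later exploits (e.g.\ in the proof of Lemma~\ref{lem:signaturerecursion}). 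Both are perfectly adequate for the proposition as stated.
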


\begin{fact}
\label{fact:Jn}
The Jacobsthal numbers $J(n)$ satisfy the property $J(n)+J({n+1})=2^n$.
\end{fact}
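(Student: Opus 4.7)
The plan is to give a direct closed-form computation. The preceding proposition effectively supplies the closed form $J(n) = \frac{2^n - (-1)^n}{3}$: indeed, since $t(c) = J(c-2) = \frac{2^{c-2} - (-1)^c}{3}$ and $(-1)^c = (-1)^{c-2}$, substituting $n = c-2$ yields the formula. I would either cite this closed form directly or, to keep the note self-contained, derive it in one line by solving the linear recursion $J(n) = J(n-1) + 2J(n-2)$ with $J(0) = 0$, $J(1) = 1$, whose characteristic polynomial $x^2 - x - 2 = (x-2)(x+1)$ has roots $2$ and $-1$.

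Given the closed form, the fact follows from a single algebraic manipulation:
\[
J(n) + J(n+1) \;=\; \frac{2^n - (-1)^n}{3} + \frac{2^{n+1} - (-1)^{n+1}}{3} \;=\; \frac{2^n + 2^{n+1}}{3} \;=\; \frac{3 \cdot 2^n}{3} \;=\; 2^n,
\]
where the cross terms $-(-1)^n$ and $-(-1)^{n+1} = (-1)^n$ cancel.

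A purely recursive alternative, avoiding the closed form, is induction on $n$. The base case $n=0$ reads $J(0)+J(1) = 0 + 1 = 2^0$. For the inductive step, use the Jacobsthal recursion twice to write $J(n) + J(n+1) = \bigl(J(n-1) + 2J(n-2)\bigr) + \bigl(J(n) + 2J(n-1)\bigr) = \bigl(J(n) + J(n-1)\bigr) + 2\bigl(J(n-1) + J(n-2)\bigr)$, and then apply the induction hypothesis to get $2^{n-1} + 2 \cdot 2^{n-2} = 2^n$.

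There is no real obstacle here; the identity is an immediate consequence of the closed form of the Jacobsthal numbers, and the only minor bookkeeping is the sign cancellation $(-1)^n + (-1)^{n+1} = 0$ in the direct calculation.
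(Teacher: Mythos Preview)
Your proof is correct. The paper states this as a \texttt{fact} without proof, treating it as a well-known property of the Jacobsthal numbers; your closed-form computation (and the inductive alternative) supplies exactly the one-line verification the paper omits.
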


In Section 3 of \cite{CohLow}, the first two authors found a useful recursive structure in the set $T(c)$; we will use this structure again to prove Lemma \ref{lem:signaturerecursion}.  Partition the set $T(c)$ into four subsets $T_i(c)$ for $1\leq i\leq 4$ according to the final three runs of the word as in Table \ref{tab:bijection}. So, for example, if $c$ is odd, then $T_1(c)$ is the subset of words in $T(c)$ whose final three runs are $+-+$.  The word $+--++--++-$ in Example \ref{ex:word} is in $T_2(6)$. Replacing the final three runs of a word $w$ with the runs in the fourth column of Table \ref{tab:bijection} yields a bijection between $T_i(c)$ and the subset in the fifth column of Table \ref{tab:bijection}.

\begin{table}[h]
    \begin{tabular}{|c|c|c|c|c|}
    \hline
    Parity of $c$ & $T_i(c)$ & Final 3 runs & Replacement & Replacement subset\\
    \hline
    \multirow{4}{*}{Odd} & $T_1(c)$ &  $+-+$ & $++-$ & $T_2(c-1)\cup T_3(c-1)$\\
    \cline{2-5}
     & $T_2(c)$ & $++--+$ & $+-$ & $T_1(c-1)\cup T_4(c-1)$\\
    \cline{2-5}
     & $T_3(c)$ & $+--+$ & $+$ & $T(c-2)$\\
    \cline{2-5}
     & $T_4(c)$ & $++-+$ & $+$ & $T(c-2)$\\
    \hline
    \multirow{4}{*}{Even} & $T_1(c)$ &  $-+-$ & $--+$ & $T_2(c-1)\cup T_3(c-1)$\\
    \cline{2-5}
     & $T_2(c)$ & $--++-$ & $-+$ & $T_1(c-1)\cup T_4(c-1)$\\
    \cline{2-5}
     & $T_3(c)$ & $-++-$ & $-$ & $T(c-2)$\\
    \cline{2-5}
     & $T_4(c)$ & $--+-$ & $-$ & $T(c-2)$\\
    \hline
    \end{tabular}
    \caption{The set $T_i(c)$ is defined to be all words in $T(c)$ whose final 3 runs are as indicated in the third column. Replacing those final 3 runs with the string in the fourth column defines a bijection between $T_i(c)$ and the set in the fifth column.}
    \label{tab:bijection}
\end{table}

\section{Signature}
\label{sec:signature}

In this section, we prove two recursive formulas for the number of $2$-bridge diagrams $D_w$ associated with words $w\in T(c)$ of a fixed crossing number and fixed signature. We also show that if one considers two consecutive crossing numbers at once, then the number of $2$-bridge diagrams $D_w$ associated with words $w$ in $T(2m+1)$ or $T(2m+2)$ of a fixed signature $\sigma=2k-2m+2$ is the binomial coefficient $\binom{2m-1}{k}$. These recursive and closed formulas are used in Section \ref{sec:average} to prove Theorem \ref{thm:avgsignature}, giving an asymptotic formula for the average of the absolute value of the signature of $2$-bridge knots with a fixed crossing number.

The following definition is the quantity of primary interest in this section.
\begin{definition}
    \label{def:s(c,sig)}
    Define $s(c,\sigma)$ to be the number of words $w$ in $T(c)$ whose associated knot $K_w$ has signature $\sigma$. 
\end{definition}

Traczyk \cite{Tra} obtained a formula for the signature of a non-split alternating link that only uses the following combinatorial data from an alternating diagram of the link. If every crossing $\crosspos$ in $D$ is replaced by its \textit{$A$-resolution} $\crosssmooth$, then one obtains a collection of simple closed curves called the \textit{all-$A$ resolution} of $D$. The number of curves in the all-$A$ resolution of $D$ is denoted by $s_A=s_A(D)$. An oriented crossing of the form $\crossposor$ is \textit{positive}, and the number of positive crossings in a diagram $D$ is denoted by $c_+=c_+(D)$. With these definitions, we state a version of Traczyk's result coming from Dasbach and Lowrance \cite{DasLow:sig}.
\begin{theorem}
\label{thm:Tra}
\cite{Tra}
Let $L$ be a non-split alternating link with alternating diagram $D$. 
The signature of $L$ is
\begin{equation}
\sigma(L)=s_A-c_+-1,
\end{equation}
where $s_A$ is the number of circles in the all-$A$-smoothing of $D$ and $c_+$ is the number of positive crossings of  $D$.
\end{theorem}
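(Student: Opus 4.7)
The plan is to derive Traczyk's identity from the Gordon--Litherland formula for the signature of an oriented link in terms of the Goeritz matrix of a checkerboard spanning surface plus a crossing-type correction. The two key inputs are (i) that for a reduced alternating diagram the Goeritz matrix is definite, so its signature is pinned down by the number of regions of one checkerboard color, and (ii) that in an alternating diagram every crossing has the same ``checkerboard type,'' so the Gordon--Litherland correction collapses into a count indexed by orientation sign alone.

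I would start by fixing a checkerboard coloring of $D$, chosen so that at each crossing the local $A$-smoothing merges the two white regions meeting there. With this convention the circles of the all-$A$ smoothing are in bijection with the white regions of $D$, giving $s_A = |W|$. The white Goeritz matrix $G_W$ is then a symmetric $(|W|-1)\times(|W|-1)$ integer matrix whose off-diagonal entries are (minus) the signed counts of crossings that are incident to a given pair of white regions, with each sign determined by the checkerboard type of the crossing. Because $D$ is alternating, all crossings share a single checkerboard type, so these signs all agree, and $G_W$ coincides up to an overall sign with the reduced Laplacian of the connected positively-weighted planar graph whose vertices are the white regions and whose edges come from crossings. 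The classical fact that such a reduced Laplacian is positive definite (its only relation among row sums is killed by deleting the unbounded region) yields $\sigma(G_W) = |W|-1 = s_A - 1$.

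The second step is to apply the Gordon--Litherland formula to the white checkerboard surface,
\[
\sigma(L) \;=\; \sigma(G_W) \;-\; \mu(D), \qquad \mu(D) \;=\; \sum_{x} \eta(x),
\]
where $\eta(x)\in\{-1,0,1\}$ is determined by comparing the checkerboard type of the crossing $x$ with the sign of $x$ with respect to the orientation of $L$. Again because $D$ is alternating, the checkerboard type is constant over all crossings, so $\eta(x)$ depends only on whether $x$ is a positive or negative crossing. A careful sign bookkeeping shows that the convention in which $\sigma(G_W) = +(s_A-1)$ forces $\eta(x)=1$ for a positive crossing and $\eta(x)=0$ for a negative crossing, so $\mu(D) = c_+$. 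Substituting into Gordon--Litherland gives $\sigma(L) = (s_A - 1) - c_+ = s_A - c_+ - 1$.

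I expect the main obstacle to be precisely this sign bookkeeping: three separate sign conventions (the definition of the Goeritz form, the Gordon--Litherland correction $\eta(x)$, and the orientation sign of a crossing) must be pinned down consistently so that the definiteness of $G_W$ and the crossing-type correction line up with signs that produce $+c_+$ rather than $-c_+$, $c_-$, or $c_+ - c_-$. Once the conventions are fixed, the rest is essentially mechanical: the definiteness reduces to the standard Laplacian computation for a connected positively-weighted planar graph, and the collapse of $\mu(D)$ to a one-sided count uses only the defining property that an alternating diagram admits a checkerboard coloring in which all crossings have a common type.
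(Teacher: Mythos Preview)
The paper does not prove this statement. It is quoted as a known result of Traczyk, in the formulation recorded by Dasbach and Lowrance, and is used as a black box for the signature computations in Section~3; no argument is supplied.

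Your outline via the Gordon--Litherland formula is the standard route to Traczyk's identity and is essentially how it is established in the literature. The two structural inputs you isolate are exactly the right ones: (i) for a reduced alternating diagram the Goeritz matrix of the chosen checkerboard surface is, up to a global sign, the reduced Laplacian of a connected positively weighted planar graph and hence definite, pinning its signature to $\pm(s_A-1)$; and (ii) in an alternating diagram all crossings carry the same unoriented Goeritz sign, so the Gordon--Litherland correction collapses to a count governed solely by the oriented sign of each crossing. Your own caveat is well placed: the only remaining content is the sign bookkeeping that makes (i) yield $+(s_A-1)$ and (ii) yield $c_+$ rather than $c_-$ or $c_+-c_-$, and this genuinely requires care. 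One terminological note: in Gordon--Litherland's conventions the symbol $\eta(c)$ usually denotes the unoriented $\pm 1$ entering the Goeritz matrix (constant across an alternating diagram), while the orientation enters separately through the type~I versus type~II dichotomy; your $\eta(x)\in\{-1,0,1\}$ is effectively the product of these two pieces of data, which is fine as a shorthand but worth keeping straight when you execute the bookkeeping.
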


\begin{example}
\label{ex:basecases}
We look at examples of $T(c)$ for $3\leq c \leq 6$ to help the reader gain intuition and also to serve as base cases in proofs below, including Lemma \ref{lem:signaturerecursion2}, Proposition \ref{prop:sym}, and Theorem \ref{thm:binomials}.

Figure \ref{fig:t3sig} shows the alternating diagrams associated to the two words in $T(3)$ and $T(4)$ and their all-$A$ resolutions. 

The only word in $T(3)$ is $+--+$, which corresponds to the braid word $\sigma_1^3$. The diagram has no positive crossings and three components in its all-$A$ resolution. Therefore $\sigma=2$. The only word in $T(4)$ is $+-+-$, which corresponds to the braid word $\sigma_1 \sigma_2^{-1}\sigma_1\sigma_2^{-1}$. The two highlighted crossings are positive, and the diagram has three components in its all-$A$ resolution. Therefore $\sigma=0$. Thus $s(3,2)=1$ and $s(4,0)=1$, and we have that $s(3,\sigma)=s(4,\sigma)=0$ for all other values of $\sigma$.

Table \ref{tab:t56} shows the words in $T(5)$ and $T(6)$, the braid words of their associated alternating diagrams, and the combinatorial data needed to apply Traczyk's formula. The resulting values of $s(c,\sigma)$ are recorded in Table \ref{tab:sig} below.
\end{example}

\begin{figure}[h]
    \[\begin{tikzpicture}[thick]
    \begin{scope}[rounded corners = 3mm, xshift=1cm]
        \draw (2.7,.3) -- (3,0) -- (2,-1) -- (1,-1) -- (0,0) -- (1,1) -- (1.3,.7);
        \draw (0.7,.3) -- (1,0) -- (2,1) -- (2.3,.7);
        \draw (1.7,.3) -- (2,0) -- (3,1) -- (2,2) -- (1,2) -- (0,1) -- (0.3,0.7);
        \foreach \i in {0,1,2} {
            \draw[->] (\i+0.7,0.3) -- (\i+0.8,0.2);
            \draw[->] (\i+0.7,0.7) -- (\i+0.8,0.8);
            }
        \draw (1.5,-1.5) node{$c_+=0$};
    \end{scope}
    
    \begin{scope}[rounded corners = 3mm, xshift = 7cm]
        \draw (1.5,2) -- (1,2) -- (0,1) -- (.5,.5) -- (0,0) -- (1,-1) -- (2,-1) -- (3,0) -- (2.5,.5) -- (3,1) -- (2,2) -- (1.5,2);
        \draw (.75,.75) -- (1,1) -- (1.5,.5) -- (1,0) -- (.5,.5) -- (.75,.75);
        \draw[xshift = 1 cm] (.75,.75) -- (1,1) -- (1.5,.5) -- (1,0) -- (.5,.5) -- (.75,.75);
        \draw (1.5,-1.5) node{$s_A=3$};
    \end{scope}

    \begin{scope}[rounded corners = 3mm, yshift=-4cm]
         \fill[red!20!white] (2.5,.5) circle (.4cm);
        \fill[red!20!white] (3.5,1.5) circle (.4cm);
        \draw (3.7,1.7) -- (4,2) -- (5,0.5) -- (4,-1) -- (1,-1) -- (0,0) -- (1.3,1.3);
        \draw (1.7,1.7) -- (2,2) -- (3,2) -- (4,1) -- (3,0) -- (2.7,.3);
        \draw (2.3,.7) -- (1,2) -- (0,1) -- (.3,.7);
        \draw (.7,.3) -- (1,0) -- (2,0) -- (3.3,1.3);

        \draw[->] (.7,.3) -- (.8,.2);
        \draw[->] (.7,.7) -- (.8,.8);
        \draw[->] (1.3,1.7) -- (1.2,1.8);
        \draw[->] (1.7,1.7) -- (1.8,1.8);
        \draw[->] (2.3,.7) -- (2.2,.8);
        \draw[->] (2.7,.7) -- (2.8,.8);
        \draw[->] (3.7,1.3) -- (3.8,1.2);
        \draw[->] (3.7,1.7) -- (3.8,1.8);
       
        \draw (2.5,-1.5) node{$c_+=2$};

    \end{scope}
    
    \begin{scope}[rounded corners = 3mm, xshift = 6cm, yshift=-4cm]
        \draw (2,-1) -- (1,-1) -- (0,0) -- (.5,.5) -- (0,1) -- (1,2) -- (1.5,1.5) -- (2,2) -- (3,2) -- (3.5,1.5) -- (4,2) -- (5,0.5) -- (4,-1)-- (2,-1);
        \draw (.75,.75) -- (1.5,1.5) -- (2.5,.5) -- (2,0) -- (1,0) -- (.5,.5) -- (.75,.75);
        \draw (2.75,.75) -- (3.5,1.5) -- (4,1) -- (3,0) -- (2.5,.5) -- (2.75,.75);
        \draw (2.5,-1.5) node{$s_A=3$};
    \end{scope}

    \end{tikzpicture}
    \]
    \caption{On the left, the word $+--+$ in $T(3)$ corresponding to the alternating diagram with braid word $\sigma_1^3$ and the word $+-+-$ in $T(4)$ corresponding to the alternating diagram with braid word $\sigma_1\sigma_2^{-1}\sigma_1\sigma_2^{-1}$.  On the right, their all-$A$ resolutions.  For both we indicate the number $c_+$ of positive crossings (shaded) and the number of components $s_A$ in the all-$A$ resolution, used to compute signature $\sigma$.}
    \label{fig:t3sig}
\end{figure}
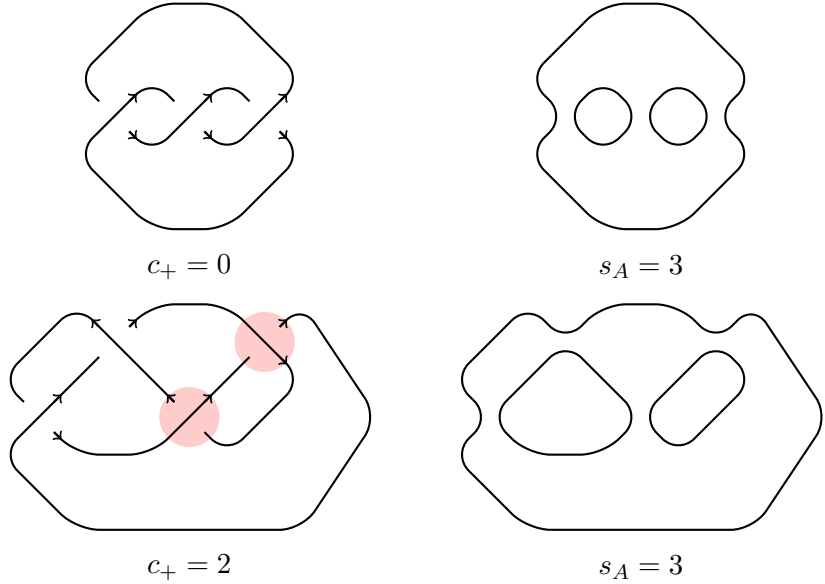

\begin{table}[h]
    \begin{tabular}{|c|c|c|c|c|c|}
    \hline
    $c$ & Word $w$ in $T(c)$ & Braid word & $c_+$ & $s_A$ & $\sigma$\\
    \hline\hline
    \multirow{3}{*}{5} & $+--+--+$ & $\sigma_1^5$  & 0 & 5 & 4\\
    \cline{2-6}
    & $+--++-+$ & $\sigma_1^2\sigma_2^{-2}\sigma_1$  & 0 & 3 & 2\\
    \cline{2-6}
    & $ +-++--+$ & $\sigma_1\sigma_2^{-2}\sigma_1^2$  & 0 & 3 & 2\\
    \hline
    \hline
    \multirow{5}{*}{6} & $+-+-++-$ & $\sigma_1\sigma_2^{-1}\sigma_1\sigma_2^{3}$  & 4 & 3 & -2\\
    \cline{2-6}
    & $+-+--+-$ & $\sigma_1\sigma_2^{-1}\sigma_1^4$  & 4 & 5 & 0\\
    \cline{2-6}
    & $+--++--++-$ & $\sigma_1^2\sigma_2^{-1}\sigma_1\sigma_2^{-2}$  & 3 & 4 & 0\\
    \cline{2-6}
    & $+-++-+-$ & $\sigma_1\sigma_2^{-3}\sigma_1\sigma_2^{-1}$  & 2 & 3 & 0\\
    \cline{2-6}
    & $+--+-+-$ & $\sigma_1^3\sigma_2^{-1}\sigma_1\sigma_2^{-1}$ &  2 & 5 & 2\\
    \hline
    \end{tabular}
    \caption{Each row contains a word in $T(5)$ or $T(6)$, the braid word of the corresponding alternating diagram, the number of positive crossings $c_+$, the number of components $s_A$ in the all-$A$ resolution, and the signature $\sigma$.}
    \label{tab:t56}
\end{table}

In the following lemma, we find our first recursive formula for $s(c,\sigma)$. Its proof follows from the bijections summarized in Table \ref{tab:bijection} and the changes to the signature induced by the replacement summarized in Figure \ref{fig:s(c,sigma)}. See Table \ref{tab:sig} for more values of $s(c,\sigma)$ when $c$ is small, as determined by this recursion.

\begin{remark}
We observe that this table seems to be identical to the sequence given in \cite{OEIS013580}, except that our $\sigma=2$ column has an additional $+1$ for odd crossing numbers.
\end{remark}

\begin{lemma}
\label{lem:signaturerecursion}
If $c\geq 5$, the number $s(c,\sigma)$ of words in $T(c)$ corresponding to a knot with signature $\sigma$ satisfies the recurrence relation
\begin{equation*}
s(c,\sigma)=s(c-1,\sigma+(-1)^c2)+s(c-2,\sigma+(-1)^c2)+s(c-2,\sigma).
\end{equation*}
\end{lemma}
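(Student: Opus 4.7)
The plan is to combine Traczyk's formula $\sigma(K_w) = s_A(D_w) - c_+(D_w) - 1$ from Theorem \ref{thm:Tra} with the partition $T(c) = T_1(c) \sqcup T_2(c) \sqcup T_3(c) \sqcup T_4(c)$ and the bijections $T_1(c) \cup T_2(c) \to T(c-1)$ and $T_j(c) \to T(c-2)$ (for $j = 3, 4$) supplied by Table \ref{tab:bijection}. For each $i \in \{1,2,3,4\}$, let $\phi_i$ denote the bijection restricted to $T_i(c)$, and for $w \in T_i(c)$ set $\Delta_i \sigma := \sigma(K_w) - \sigma(K_{\phi_i(w)})$. I expect to show that $\Delta_i \sigma = -(-1)^c \cdot 2$ when $i \in \{1,2,3\}$ and $\Delta_4 \sigma = 0$. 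Granting this, the identity
\[
s(c,\sigma) = \sum_{i=1}^{4} \bigl|\{\, w \in T_i(c) \,:\, \sigma(K_w) = \sigma \,\}\bigr|
\]
and the bijections immediately yield the recurrence, with the $T_1 \cup T_2$ contribution producing the term $s(c-1, \sigma + (-1)^c 2)$, the $T_3$ contribution producing $s(c-2, \sigma + (-1)^c 2)$, and the $T_4$ contribution producing $s(c-2, \sigma)$.

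To verify the four signature changes, I write $\Delta_i \sigma = \Delta_i s_A - \Delta_i c_+$ and carry out a local diagrammatic analysis at the right-hand end of $D_w$. In each case the replacement alters only the last three runs, so only crossings in a small neighborhood of the right edge of the diagram are affected, and I can read off $\Delta_i c_+$ directly from Table \ref{tab:orientations} once I track the positions mod $3$ of the crossings within the replacement window. The positions mod $3$ of unchanged crossings to the left are preserved, because the word length changes by $0$ in case $i = 1$ and by $3$ in cases $i = 2, 3, 4$; consequently the orientations of all crossings outside the replacement window are unaffected by $\phi_i$.

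The main obstacle is the all-$A$ smoothing count $\Delta_i s_A$, since the number of circles in a smoothing is a priori a global quantity. Here I plan to argue that the local arcs of the all-$A$ smoothing inside the replacement window meet the rest of the diagram along a fixed small number of boundary points, and that in each of the four cases the replacement reconnects these boundary points in a manner that either merges two circles into one, splits one into two, or preserves the circle count, independently of the global structure to the left. Case-by-case bookkeeping then pins down $\Delta_i s_A$, and combining with $\Delta_i c_+$ gives the predicted $\Delta_i \sigma$. The small cases $c = 5, 6$ worked out in Example \ref{ex:basecases} and Table \ref{tab:t56}, together with the values tabulated in Table \ref{tab:sig}, provide an independent sanity check for each of the four local signature shifts.
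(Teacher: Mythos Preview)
Your proposal is correct and follows essentially the same route as the paper. The paper defines $s_i(c,\sigma)=|\{w\in T_i(c):\sigma(K_w)=\sigma\}|$, uses the bijections of Table~\ref{tab:bijection}, and computes the local change $\Delta\sigma$ in each of the four cases via Traczyk's formula by drawing the ends of the alternating diagram and the corresponding all-$A$ smoothing (this is the content of Figure~\ref{fig:s(c,sigma)}); summing then gives the recurrence exactly as you outline. Your anticipation that the $s_A$ change is determined by how the local smoothing arcs reconnect a fixed set of boundary strands is precisely what the paper's figure verifies case by case. One small remark: the orientations (and hence positions mod~$3$) of crossings to the left of the replacement window are automatically preserved simply because positions are counted from the left, so your length-change bookkeeping, while correct, is not actually needed for that step.
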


\begin{table}[h!]
\begin{tabular}{c|cccc|c|c|c|ccccc}
$c\backslash\sigma$ & -10 & -8 & -6 & -4 & -2 & 0 & 2 & 4 & 6 & 8 & 10 & 12 \\
\hline
3 &   &   &   &  &  &  & 1 &  &  &  &  &  \\
4 &   &   &   &  &  & 1 &  &  &  &  &  &  \\
5 &   &   &   &  &  &  & 2 & 1 &  &  &  &  \\
6 &  &  &  &   & 1 & 3 & 1 &  &  &  &  &  \\
7 &  &  &  &  &  & 1 & 5 & 4 & 1 &  &  &  \\
8 &  &  &  & 1 & 5 & 9 & 5 & 1 &  &  &  &  \\
9 &  &  &  &  & 1 & 6 & 15 & 14 & 6 & 1 &  &  \\
10 &  &  & 1 & 7 & 20 & 29 & 20 & 7 & 1 &  &  &  \\
11 &  &  &  & 1 & 8 & 27 & 50 & 49 & 27 & 8 & 1 &  \\
12 &  & 1 & 9 & 35 & 76 & 99 & 76 & 35 & 9 & 1 &  &  \\
13 &  &  & 1 & 10 & 44 & 111 & 176 & 175 & 111 & 44 & 10 & 1 \\
14 & 1 & 11 & 54 & 155 & 286 & 351 & 286 & 155 & 54 & 11 & 1 & 
\end{tabular}
\caption{Numbers $s(c,\sigma)$ of knots in $T(c)$ with $c$ crossings and signature $\sigma$, as generated from Example \ref{ex:basecases} and Lemma \ref{lem:signaturerecursion}.}
\label{tab:sig}
\end{table}

\begin{proof}
Define $S_i(c,\sigma)$ to be the subset of $T_i(c)$ defined in Section \ref{sec:background} consisting of words where $\sigma(K_w)=\sigma$, and define $s_i(c,\sigma)$ to be the number of words in $S_i(c,\sigma)$. 

 Figure \ref{fig:s(c,sigma)} shows that the bijections described in Table \ref{tab:bijection} respect signature in the following sense. Suppose two words have the same crossing number, signature, and are in the same case according to Table \ref{tab:bijection}: that is, both words are in $S_i(c,\sigma)$ for some $i$, $c$, and $\sigma$. Then the images of the two words under the bijections described in Table \ref{tab:bijection} have the same crossing number and signature, and both are elements of the set in the last column of Table \ref{tab:bijection}. Therefore, the bijections from Table \ref{tab:bijection} restrict to bijections when the signature $\sigma$ is fixed, that is, on the sets $S_i(c,\sigma)$.

If $c$ is  odd, Table \ref{tab:bijection} and Figure \ref{fig:s(c,sigma)} imply 
\begin{align*}
    s_1(c,\sigma) = & \; s_2(c-1,\sigma-2) + s_3(c-1,\sigma-2),\\
    s_2(c,\sigma) = & \; s_1(c-1,\sigma-2) + s_4(c-1,\sigma-2),\\
    s_3(c,\sigma) = & \; s(c-2,\sigma-2),~\text{and}\\
    s_4(c,\sigma) = & \; s(c-2,\sigma).
\end{align*}
Adding each of these equations together yields
\[s(c,\sigma) = s(c-1,\sigma-2) + s(c-2,\sigma-2)+s(c-2,\sigma),\]
proving the result when $c$ is  odd.

If $c$ is even, Table \ref{tab:bijection} and Figure \ref{fig:s(c,sigma)} imply
\begin{align*}
    s_1(c,\sigma) = & \; s_2(c-1,\sigma+2) + s_3(c-1,\sigma+2),\\
    s_2(c,\sigma) = & \; s_1(c-1,\sigma+2) + s_4(c-1,\sigma+2),\\
    s_3(c,\sigma) = & \; s(c-2,\sigma+2),~\text{and}\\
    s_4(c,\sigma) = & \; s(c-2,\sigma).
\end{align*}
Adding each of these equations together yields
\[s(c,\sigma) = s(c-1,\sigma+2) + s(c-2,\sigma+2)+s(c-2,\sigma),\]
proving the result when $c$ is even.
\end{proof}

\begin{figure}[!h]
\[\begin{tikzpicture}[thick]



\draw (-.25,1.5) node{\small{Final}};
\draw (-.25,1.1) node{\small{Runs}};

\draw (1.9,1.5) node{\small{Alternating}};
\draw (1.9,1.1) node{\small{Diagram}};

\draw (4,1.5) node{\small{All-$A$}};
\draw (4,1.1) node{\small{Smoothing}};

\draw(-1,-.75) rectangle (5.5,.75);
\draw(0,0) node{${\scriptstyle +-+}$};
\begin{scope}[scale=.5, rounded corners = 1mm, xshift = 2.5cm, yshift = -1cm]

\fill[white!80!red] (.5,.5) circle (.5cm);

\draw (0,0) -- (1.3, 1.3);
\draw (0,1) -- (.3,.7);
\draw (.7,.3) -- (1,0) -- (2,0) -- (3,1) -- (2,2) -- (1.7,1.7);
\draw (0,2) -- (1,2) -- (2.3,.7);
\draw (2.7,.3) -- (3,0);

\draw[thick,->] (.5, .5) -- (.1,.1);
\draw[thick,->] (.7,.3) -- (.9,.1);
\draw[->] (2.5, .5) -- (2.9,.9);
\draw[->] (2.7,.3) -- (2.9,.1);
\draw[->] (1.5, 1.5) -- (1.9,1.1);
\draw[->] (1.3,1.3) -- (1.1,1.1);

\end{scope}

\begin{scope}[scale=.5, rounded corners = 1mm, xshift = 6.75cm, yshift = -1cm]

\draw[-] (0,1) -- (.4,.5) -- (0,0);
\draw[-] (0,2) -- (1,2) -- (1.5,1.6) -- (2,2) -- (3,1) -- (2.6,.5) -- (3,0);

\draw[-] (1.5,1.4) -- (.6,.5) -- (1,0) -- (2,0) -- (2.4,.5) -- (2,1) -- cycle;

\end{scope}

\draw[->] (5.5,0) -- (7,0);
\draw (6.25,.5) node{\small{Case $1_o$}};
\draw (6.25,-.5) node{\small{$\Delta\sigma {=} {-}2$}};

\begin{scope}[xshift = 8 cm]
\draw (-.25,1.5) node{\small{Final}};
\draw (-.25,1.1) node{\small{Runs}};

\draw (1.9,1.5) node{\small{Alternating}};
\draw (1.9,1.1) node{\small{Diagram}};

\draw (4,1.5) node{\small{All-$A$}};
\draw (4,1.1) node{\small{Smoothing}};
\draw(-1,-.75) rectangle (5.5,.75);
\draw(0,0) node{${\scriptstyle ++-}$};
\begin{scope}[scale=.5, rounded corners = 1mm, xshift = 3.5cm, yshift = -1cm]

\fill[white!80!red] (.5,1.5) circle (.5cm);
\fill[white!80!red] (1.5,1.5) circle (.5cm);
\draw (0,0) -- (1,0) -- (2,1) -- (1,2) -- (.6,1.6);
\draw (.4,1.4) -- (0,1);
\draw (0,2) -- (1,1) -- (1.4,1.4);
\draw (1.6,1.6) -- (2,2);
\draw[thick,->] (0.7,1.7) -- (.9,1.9);
\draw[thick,->] (.7,1.3) -- (.9,1.1);
\draw[thick,->] (1.7,1.7) -- (1.9,1.9);
\draw[thick,->] (1.7, 1.3) -- (1.9,1.1);

\end{scope}

\begin{scope}[scale=.5, rounded corners = 1mm, xshift = 7.5cm, yshift = -1cm]

\draw[-] (0,2) -- (.5,1.6) -- (1,2) -- (1.5,1.6) -- (2,2);
\draw[-] (0,1) -- (.5, 1.4) -- (1,1) -- (1.5,1.4) -- (2,1) -- (1,0) -- (0,0);

\end{scope}
\end{scope}

\begin{scope}[yshift = -2cm]

\draw(-1,-.75) rectangle (5.5,.75);
\draw(0,0) node{${\scriptstyle ++--+}$};
\begin{scope}[scale=.5, rounded corners = 1mm, xshift = 2.5cm, yshift = -1cm]

\draw (0,0) -- (1,0) -- (2,1) -- (2.3,.7);
\draw (2.7,.3) -- (3,0);
\draw (0,1) -- (.3,1.3);
\draw (0,2) -- (1.3,.7);
\draw (1.7,.3) -- (2,0) -- (3,1) -- (2,2) -- (1,2) -- (.7,1.7);
\draw[->] (.3,1.3) -- (.1,1.1);
\draw[->] (.5,1.5) -- (.9,1.1);
\draw[->] (1.5,.5) -- (1.9,.9);
\draw[->] (1.7,.3) -- (1.9,.1);
\draw[->] (2.5,.5) -- (2.9,.9);
\draw[->] (2.7,.3) -- (2.9,.1);

\end{scope}

\begin{scope}[scale=.5, rounded corners = 1mm, xshift = 5.5cm, yshift = -1cm]

    \foreach \i/\j in {1/1} {
        \draw (\i,\j+1) -- (\i+.5,\j+.6) -- (\i+1,\j+1);
        \draw (\i,\j)   -- (\i+.5,\j+.4) -- (\i+1,\j);
    }
    \foreach \i/\j in {2/0,3/0} {
        \draw (\i,\j)   -- (\i+.4,\j+.5) -- (\i,\j+1);
        \draw (\i+1,\j) -- (\i+.6,\j+.5) -- (\i+1,\j+1);
    }
\draw (1,0) -- (2,0);
\draw (2,2) -- (3,2) -- (4,1);
\end{scope}

\draw[->] (5.5,0) -- (7,0);
\draw (6.25,.5) node{\small{Case $2_o$}};
\draw (6.25,-.5) node{\small{$\Delta\sigma {=} {-}2$}};

\end{scope}

\begin{scope}[xshift = 8cm, yshift=-2cm]

\draw(-1,-.75) rectangle (5.5,.75);

\draw(0,0) node{${\scriptstyle +-}$};
\begin{scope}[scale=.5, rounded corners = 1mm, xshift = 3.5cm, yshift = -1cm]

\fill[white!80!red] (.5,.5) circle (.5cm);
\fill[white!80!red] (1.5,1.5) circle (.5cm);
\draw (0,0) -- (1.3,1.3);
\draw (1.7,1.7)--(2,2);
\draw (0,1) -- (0.3,0.7);
\draw (0.7,0.3) -- (1,0) -- (2,1) -- (1,2) -- (0,2);

\draw[thick,->] (0.5,0.5) -- (0.9, 0.9);
\draw[thick,->] (0.3,0.7) -- (0.1,0.9);
\draw[thick,->] (1.5,1.5) -- (1.9,1.1);
\draw[thick,->] (1.7, 1.7) -- (1.9, 1.9);

\end{scope}

\begin{scope}[scale=.5, rounded corners = 1mm, xshift = 6.5cm, yshift = -1cm]

    \foreach \i/\j in {2/1} {
        \draw (\i,\j+1) -- (\i+.5,\j+.6) -- (\i+1,\j+1);
        \draw (\i,\j)   -- (\i+.5,\j+.4) -- (\i+1,\j);
    }
    \foreach \i/\j in {1/0} {
        \draw (\i,\j)   -- (\i+.4,\j+.5) -- (\i,\j+1);
        \draw (\i+1,\j) -- (\i+.6,\j+.5) -- (\i+1,\j+1);
    }
\draw[-] (2,0)--(3,1);
\draw[-] (1,2)--(2,2);
\end{scope}
\end{scope}

\begin{scope}[yshift = -4cm]

\draw(-1,-.75) rectangle (5.5,.75);
\draw(0,0) node{${\scriptstyle +--+}$};
\begin{scope}[scale=.5, rounded corners = 1mm, xshift = 2.5cm, yshift = -1cm]

\draw (0,0) -- (1,1) -- (1.3,.7);
\draw (0,1) -- (.3,.7);
\draw (.7,.3) -- (1,0) -- (2,1) -- (2.3,.7);
\draw (1.7,.3) -- (2,0) -- (3,1) -- (2,2) -- (0,2);
\draw (2.7,.3) -- (3,0);
\draw[->] (0.5,0.5) -- (0.9,0.9);
\draw[->] (0.7,0.3) -- (0.9, 0.1);
\draw[->] (1.5,0.5) -- (1.9,0.9);
\draw[->] (1.7,0.3) -- (1.9, 0.1);
\draw[->] (2.5,0.5) -- (2.9,0.9);
\draw[->] (2.7,0.3) -- (2.9, 0.1);
\end{scope}

\begin{scope}[scale=.5, rounded corners = 1mm, xshift = 6.5cm, yshift = -1cm]
    \foreach \i/\j in {0/0,1/0,2/0} {
        \draw (\i,\j)   -- (\i+.4,\j+.5) -- (\i,\j+1);
        \draw (\i+1,\j) -- (\i+.6,\j+.5) -- (\i+1,\j+1);
    }
\draw (0,2) -- (2,2) -- (3,1);
\end{scope}

\draw[->] (5.5,0) -- (7,0);
\draw (6.25,.5) node{\small{Case $3_o$}};
\draw (6.25,-.5) node{\small{$\Delta\sigma {=} {-}2$}};

\end{scope}

\begin{scope}[xshift = 8cm, yshift = -4cm]

\draw(-1,-.75) rectangle (5.5,.75);

\draw(0,0) node{${\scriptstyle +}$};
\begin{scope}[scale=.5, rounded corners = 1mm, xshift = 3.5cm, yshift = -1cm]

\draw (0,1) -- (.3,.7);
\draw (.7,.3) -- (1,0);
\draw (0,0) -- (1,1) -- (0,2);

\draw[->] (.5,.5) -- (.9,.9);
\draw[->] (.7,.3) -- (.9,.1);

\end{scope}

\begin{scope}[scale=.5, rounded corners = 1mm, xshift = 7.5cm, yshift = -1cm]
    \foreach \i/\j in {0/0} {
        \draw (\i,\j)   -- (\i+.4,\j+.5) -- (\i,\j+1);
        \draw (\i+1,\j) -- (\i+.6,\j+.5) -- (\i+1,\j+1);
    }
\draw (0,2) -- (1,1);
\end{scope}
\end{scope}

\begin{scope}[yshift = -6cm]

\draw(-1,-.75) rectangle (5.5,.75);
\draw(0,0) node{${\scriptstyle ++-+}$};
\begin{scope}[scale=.5, rounded corners = 1mm, xshift = 2.5cm, yshift = -1cm]
\draw (0,0) -- (2,0) -- (3,1) -- (2,2) -- (1.7,1.7);
\draw (1.3,1.3) -- (1,1) -- (0,2);
\draw (0,1) -- (.3,1.3);
\draw (.7,1.7) -- (1,2) -- (2.3,.7);
\draw (2.7,0.3) -- (3,0);

\draw[->] (0.5, 1.5) -- (0.1, 1.9);
\draw[->] (0.7,1.7) -- (0.9,1.9);
\draw[->] (1.5, 1.5) -- (1.9,1.1);
\draw[->] (1.3,1.3) -- (1.1, 1.1);
\draw[->] (2.5,0.5) -- (2.9,0.9);
\draw[->] (2.7,0.3) -- (2.9, 0.1);
\end{scope}

\begin{scope}[scale=.5, rounded corners = 1mm, xshift = 6.5cm, yshift = -1cm]
    \foreach \i/\j in {0/1,1/1} {
        \draw (\i,\j+1) -- (\i+.5,\j+.6) -- (\i+1,\j+1);
        \draw (\i,\j)   -- (\i+.5,\j+.4) -- (\i+1,\j);
    }
    \foreach \i/\j in {2/0} {
        \draw (\i,\j)   -- (\i+.4,\j+.5) -- (\i,\j+1);
        \draw (\i+1,\j) -- (\i+.6,\j+.5) -- (\i+1,\j+1);
    }
\draw (0,0) -- (2,0);
\draw (2,2) -- (3,1);
\end{scope}
\draw[->] (5.5,0) -- (7,0);
\draw (6.25,.5) node{\small{Case $4_o$}};
\draw (6.25,-.5) node{\small{$\Delta\sigma {=} 0$}};

\end{scope}

\begin{scope}[xshift = 8cm, yshift = -6cm]
\draw(-1,-.75) rectangle (5.5,.75);

\draw(0,0) node{${\scriptstyle +}$};
\begin{scope}[scale=.5, rounded corners = 1mm, xshift = 3.5cm, yshift = -1cm]

\draw (0,1) -- (.3,.7);
\draw (.7,.3) -- (1,0);
\draw (0,0) -- (1,1) -- (0,2);

\draw[->] (.5,.5) -- (.9,.9);
\draw[->] (.7,.3) -- (.9,.1);

\end{scope}

\begin{scope}[scale=.5, rounded corners = 1mm, xshift = 7.5cm, yshift = -1cm]
    \foreach \i/\j in {0/0} {
        \draw (\i,\j)   -- (\i+.4,\j+.5) -- (\i,\j+1);
        \draw (\i+1,\j) -- (\i+.6,\j+.5) -- (\i+1,\j+1);
    }
\draw (0,2) -- (1,1);
\end{scope}
\end{scope}



\begin{scope}[yshift = -8cm]

\draw(-1,-.75) rectangle (5.5,.75);
\draw(0,0) node{${\scriptstyle -+-}$};
\begin{scope}[scale=.5, rounded corners = 1mm, xshift = 2.5cm, yshift = -1cm]
\fill[white!80!red] (1.5,.5) circle (.5cm);
\fill[white!80!red] (2.5,1.5) circle (.5cm);
\draw (0,0) -- (1,0) -- (2,1) -- (2.3,1.3);
\draw (2.7,1.7) -- (3,2);
\draw (0,2) -- (1,1) -- (1.3,.7);
\draw (1.7,.3) -- (2,0) -- (3,1) -- (2,2) -- (1,2) -- (.7,1.7);
\draw (0,1) -- (.3,1.3);

\draw[->] (.3,1.7) -- (.1,1.9);
\draw[->] (.7,1.7) -- (.9,1.9);
\draw[thick, ->] (1.7,.7) -- (1.9,.9);
\draw[thick, ->] (1.3,.7) -- (1.1,.9);
\draw[thick,->] (2.7,1.7) -- (2.9,1.9);
\draw[thick,->] (2.7,1.3) -- (2.9,1.1);

\end{scope}

\begin{scope}[scale=.5, rounded corners = 1mm, xshift = 6.5cm, yshift = -1cm]
    \foreach \i/\j in {0/1,2/1} {
        \draw (\i,\j+1) -- (\i+.5,\j+.6) -- (\i+1,\j+1);
        \draw (\i,\j)   -- (\i+.5,\j+.4) -- (\i+1,\j);
    }
    \foreach \i/\j in {1/0} {
        \draw (\i,\j)   -- (\i+.4,\j+.5) -- (\i,\j+1);
        \draw (\i+1,\j) -- (\i+.6,\j+.5) -- (\i+1,\j+1);
    }
\draw (0,0)--(1,0);
\draw (1,2)--(2,2);
\draw (2,0)--(3,1);
\end{scope}

\draw[->] (5.5,0) -- (7,0);
\draw (6.25,.5) node{\small{Case $1_e$}};
\draw (6.25,-.5) node{\small{$\Delta\sigma {=} 2$}};

\begin{scope}[xshift = 8 cm]
\draw(-1,-.75) rectangle (5.5,.75);
\draw(0,0) node{${\scriptstyle -{}-+}$};
\begin{scope}[scale=.5, rounded corners = 1mm, xshift = 3.5cm, yshift = -1cm]
\draw (0,2) -- (1,2) -- (2,1) -- (1,0) -- (.7,.3);
\draw (.3,.7) -- (0,1);
\draw (0,0) -- (1,1) -- (1.3,.7);
\draw (1.7,.3) -- (2,0);
\draw[->] (0.7,.3) -- (.9,.1);
\draw[->] (.7,.7) -- (.9,.9);
\draw[->] (1.7,.3) -- (1.9,.1);
\draw[->] (1.7, .7) -- (1.9,.9);

\end{scope}

\begin{scope}[scale=.5, rounded corners = 1mm, xshift = 7.5cm, yshift = -1cm]

    \foreach \i/\j in {0/0,1/0} {
        \draw (\i,\j)   -- (\i+.4,\j+.5) -- (\i,\j+1);
        \draw (\i+1,\j) -- (\i+.6,\j+.5) -- (\i+1,\j+1);
    }
\draw (0,2) -- (1,2) -- (2,1);
\end{scope}
\end{scope}
\end{scope}

\begin{scope}[yshift = -10cm]

\draw(-1,-.75) rectangle (5.5,.75);
\draw(0,0) node{${\scriptstyle -{}-++-}$};
\begin{scope}[scale=.5, rounded corners = 1mm, xshift = 2.5cm, yshift = -1cm]

\fill[white!80!red] (.5,.5) circle (.5cm);
\fill[white!80!red] (1.5,1.5) circle (.5cm);
\fill[white!80!red] (2.5,1.5) circle (.5cm);
\draw (1,2) -- (2,1) -- (2.3,1.3);
\draw (2.7,1.7) -- (3,2);
\draw (0,1) -- (.3,.7);
\draw (0,0) -- (1.3,1.3);
\draw (1.7,1.7) -- (2,2) -- (3,1) -- (2,0) -- (1,0) -- (.7,.3);
\draw[thick,->] (.3,.7) -- (.1,.9);
\draw[thick,->] (.5,.5) -- (.9,.9);
\draw[thick,->] (1.5,1.5) -- (1.9,1.1);
\draw[thick,->] (1.7,1.7) -- (1.9,1.9);
\draw[thick,->] (2.5,1.5) -- (2.9,1.1);
\draw[thick,->] (2.7,1.7) -- (2.9,1.9);
\draw (0,2) -- (1,2);
\end{scope}

\begin{scope}[scale=.5, rounded corners = 1mm, xshift = 5.5cm, yshift = -1cm]

    \foreach \i/\j in {2/1,3/1} {
        \draw (\i,\j+1) -- (\i+.5,\j+.6) -- (\i+1,\j+1);
        \draw (\i,\j)   -- (\i+.5,\j+.4) -- (\i+1,\j);
    }
    \foreach \i/\j in {1/0} {
        \draw (\i,\j)   -- (\i+.4,\j+.5) -- (\i,\j+1);
        \draw (\i+1,\j) -- (\i+.6,\j+.5) -- (\i+1,\j+1);
    }
\draw (1,2) -- (2,2);
\draw (2,0) -- (3,0) -- (4,1);
\end{scope}

\draw[->] (5.5,0) -- (7,0);
\draw (6.25,.5) node{\small{Case $2_e$}};
\draw (6.25,-.5) node{\small{$\Delta\sigma {=} 2$}};
\end{scope}

\begin{scope}[xshift = 8cm, yshift=-10cm]

\draw(-1,-.75) rectangle (5.5,.75);

\draw(0,0) node{${\scriptstyle -+}$};
\begin{scope}[scale=.5, rounded corners = 1mm, xshift = 3.5cm, yshift = -1cm]

\draw (0,2) -- (1.3,.7);
\draw (1.7,.3)--(2,0);
\draw (0,1) -- (0.3,1.3);
\draw (0.7,1.7) -- (1,2) -- (2,1) -- (1,0) -- (0,0);

\draw[->] (0.3,1.3) -- (0.1, 1.1);
\draw[->] (0.7,1.3) -- (0.9,1.1);
\draw[->] (1.7,0.7) -- (1.9,0.9);
\draw[->] (1.7, 0.3) -- (1.9, 0.1);

\end{scope}

\begin{scope}[scale=.5, rounded corners = 1mm, xshift = 6.5cm, yshift = -1cm]

    \foreach \i/\j in {1/1} {
        \draw (\i,\j+1) -- (\i+.5,\j+.6) -- (\i+1,\j+1);
        \draw (\i,\j)   -- (\i+.5,\j+.4) -- (\i+1,\j);
    }
    \foreach \i/\j in {2/0} {
        \draw (\i,\j)   -- (\i+.4,\j+.5) -- (\i,\j+1);
        \draw (\i+1,\j) -- (\i+.6,\j+.5) -- (\i+1,\j+1);
    }
\draw[-] (2,2) -- (3,1);
\draw[-] (1,0)--(2,0);
\end{scope}
\end{scope}

\begin{scope}[yshift = -12cm]

\draw(-1,-.75) rectangle (5.5,.75);
\draw(0,0) node{${\scriptstyle -++-}$};
\begin{scope}[scale=.5, rounded corners = 1mm, xshift = 2.5cm, yshift = -1cm]

\fill[white!80!red] (.5,1.5) circle (.5cm);
\fill[white!80!red] (1.5,1.5) circle (.5cm);
\fill[white!80!red] (2.5,1.5) circle (.5cm);
\draw (0,2) -- (1,1) -- (1.3,1.3);
\draw (0,1) -- (.3,1.3);
\draw (.7,1.7) -- (1,2) -- (2,1) -- (2.3,1.3);
\draw (1.7,1.7) -- (2,2) -- (3,1) -- (2,0) -- (0,0);
\draw (2.7,1.7) -- (3,2);
\draw[thick,->] (0.5,1.5) -- (0.9,1.1);
\draw[thick,->] (0.7,1.7) -- (0.9,1.9);
\draw[thick,->] (1.5,1.5) -- (1.9,1.1);
\draw[thick,->] (1.7,1.7) -- (1.9,1.9);
\draw[thick,->] (2.5,1.5) -- (2.9,1.1);
\draw[thick,->] (2.7,1.7) -- (2.9,1.9);
\end{scope}

\begin{scope}[scale=.5, rounded corners = 1mm, xshift = 6.5cm, yshift = -1cm]
    \foreach \i/\j in {0/1,1/1,2/1} {
        \draw (\i,\j+1) -- (\i+.5,\j+.6) -- (\i+1,\j+1);
        \draw (\i,\j)   -- (\i+.5,\j+.4) -- (\i+1,\j);
    }
\draw (0,0) -- (2,0) -- (3,1);
\end{scope}

\draw[->] (5.5,0) -- (7,0);
\draw (6.25,.5) node{\small{Case $3_e$}};
\draw (6.25,-.5) node{\small{$\Delta\sigma {=} 2$}};

\end{scope}

\begin{scope}[xshift = 8cm, yshift = -12cm]

\draw(-1,-.75) rectangle (5.5,.75);

\draw(0,0) node{${\scriptstyle -}$};
\begin{scope}[scale=.5, rounded corners = 1mm, xshift = 3.5cm, yshift = -1cm]

\fill[white!80!red] (.5,1.5) circle (.5cm);
\draw (0,1) -- (.3,1.3);
\draw (.7,1.7) -- (1,2);
\draw (0,2) -- (1,1) -- (0,0);

\draw[thick,->] (.5,1.5) -- (.9,1.1);
\draw[thick,->] (.7,1.7) -- (.9,1.9);

\end{scope}

\begin{scope}[scale=.5, rounded corners = 1mm, xshift = 7.5cm, yshift = -1cm]
    \foreach \i/\j in {0/1} {
        \draw (\i,\j+1) -- (\i+.5,\j+.6) -- (\i+1,\j+1);
        \draw (\i,\j)   -- (\i+.5,\j+.4) -- (\i+1,\j);
    }
\draw (0,0) -- (1,1);
\end{scope}
\end{scope}

\begin{scope}[yshift = -14cm]

\draw(-1,-.75) rectangle (5.5,.75);
\draw(0,0) node{${\scriptstyle --+-}$};
\begin{scope}[scale=.5, rounded corners = 1mm, xshift = 2.5cm, yshift = -1cm]
\fill[white!80!red] (.5,.5) circle (.5cm);
\fill[white!80!red] (1.5,.5) circle (.5cm);
\fill[white!80!red] (2.5,1.5) circle (.5cm);
\draw (0,2) -- (2,2) -- (3,1) -- (2,0) -- (1.7,.3);
\draw (1.3,.7) -- (1,1) -- (0,0);
\draw (0,1) -- (.3,.7);
\draw (.7,.3) -- (1,0) -- (2.3,1.3);
\draw (2.7,1.7) -- (3,2);

\draw[thick,->] (0.5,.5) -- (0.1,.1);
\draw[thick,->] (0.7,.3) -- (0.9,.1);
\draw[thick,->] (1.5,.5) -- (1.9,.9);
\draw[thick,->] (1.3,.7) -- (1.1,.9);
\draw[thick,->] (2.5,1.5) --(2.9,1.1);
\draw[thick,->] (2.7,1.7) --(2.9,1.9);
\end{scope}

\begin{scope}[scale=.5, rounded corners = 1mm, xshift = 6.5cm, yshift = -1cm]
    \foreach \i/\j in {2/1} {
        \draw (\i,\j+1) -- (\i+.5,\j+.6) -- (\i+1,\j+1);
        \draw (\i,\j)   -- (\i+.5,\j+.4) -- (\i+1,\j);
    }
    \foreach \i/\j in {0/0,1/0} {
        \draw (\i,\j)   -- (\i+.4,\j+.5) -- (\i,\j+1);
        \draw (\i+1,\j) -- (\i+.6,\j+.5) -- (\i+1,\j+1);
    }
\draw (0,2) -- (2,2);
\draw (2,0) -- (3,1);
\end{scope}
\draw[->] (5.5,0) -- (7,0);
\draw (6.25,.5) node{\small{Case $4_e$}};
\draw (6.25,-.5) node{\small{$\Delta\sigma {=} 0$}};

\end{scope}

\begin{scope}[xshift = 8cm, yshift = -14cm]
\draw(-1,-.75) rectangle (5.5,.75);

\draw(0,0) node{${\scriptstyle -}$};
\begin{scope}[scale=.5, rounded corners = 1mm, xshift = 3.5cm, yshift = -1cm]

\fill[white!80!red] (.5,1.5) circle (.5cm);
\draw (0,1) -- (.3,1.3);
\draw (.7,1.7) -- (1,2);
\draw (0,2) -- (1,1) -- (0,0);

\draw[thick,->] (.5,1.5) -- (.9,1.1);
\draw[thick,->] (.7,1.7) -- (.9,1.9);

\end{scope}

\begin{scope}[scale=.5, rounded corners = 1mm, xshift = 7.5cm, yshift = -1cm]
    \foreach \i/\j in {0/1} {
        \draw (\i,\j+1) -- (\i+.5,\j+.6) -- (\i+1,\j+1);
        \draw (\i,\j)   -- (\i+.5,\j+.4) -- (\i+1,\j);
    }
\draw (0,0) -- (1,1);
\end{scope}
\end{scope}

\end{tikzpicture}\]

\caption{The final runs, alternating diagrams, and partial $A$-resolutions correspond to the cases in Lemma \ref{lem:signaturerecursion}, with positive crossings shaded. In each case $ \Delta \sigma = \sigma(K_2)-\sigma(K_1)$ where $K_1$ and $K_2$ are the knots on the left and right, respectively. The knots on the left correspond to words in $T(c)$ while the knots on the right correspond to knots in $T(c-1)\cup T(c-2)$, and the correspondence between left and right is the same as in Table \ref{tab:bijection}.}
\label{fig:s(c,sigma)}
\end{figure}

The recursion in Lemma \ref{lem:signaturerecursion} implies another recursion that we will use to prove Theorem \ref{thm:avgsignature}.

\newpage
\begin{lemma}
\label{lem:signaturerecursion2}
If $c\geq 4$, the number $s(c, \sigma)$ of words in $T(c)$ corresponding to a knot with signature $\sigma$ satisfies a recurrence relation similar to that for binomial coefficients:
\begin{equation}
\label{eq:recursion2}
s(c,\sigma)=
\begin{cases}
s(c-1,\sigma-2)+s(c-1,\sigma-4) & \text{when $c$ is odd and $\sigma\neq 2$,}\\
s(c-1,\sigma-2)+s(c-1,\sigma-4)+1 & \text{when $c$ is odd and $\sigma= 2$,}\\
s(c-1,\sigma+2)+s(c-1,\sigma+4) & \text{when $c$ is even and $\sigma\neq -2$, and}\\
s(c-1,\sigma+2)+s(c-1,\sigma+4)-1 & \text{when $c$ is even and $\sigma= -2$.}\\
\end{cases}
\end{equation}
\end{lemma}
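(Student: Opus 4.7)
My plan is to proceed by induction on $c \geq 4$, using Lemma \ref{lem:signaturerecursion} as the engine. For the base case $c=4$, I verify each instance of the recurrence directly from the data $s(4,0)=1$, $s(4,\sigma)=0$ otherwise, together with $s(3,2)=1$ and $s(3,\sigma)=0$ otherwise from Example \ref{ex:basecases}. The only non-trivial check is
\[ s(4,-2) = s(3,0) + s(3,2) - 1 = 0+1-1 = 0, \]
which confirms the exceptional $-1$ term.

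For the inductive step with $c \geq 5$ odd, I apply Lemma \ref{lem:signaturerecursion} at $(c,\sigma)$ to get
\[ s(c,\sigma) = s(c-1,\sigma-2) + s(c-2,\sigma-2) + s(c-2,\sigma). \]
The claimed identity is therefore equivalent to
\[ s(c-1,\sigma-4) = s(c-2,\sigma-2) + s(c-2,\sigma) - \delta_{\sigma,2}, \]
where $\delta_{\sigma,2}$ equals $1$ when $\sigma=2$ and $0$ otherwise. Since $c-1 \geq 4$ is even, the inductive hypothesis applied at $(c-1,\sigma-4)$ gives exactly this identity, using $\delta_{\sigma-4,-2} = \delta_{\sigma,2}$. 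The even case $c \geq 6$ is symmetric: Lemma \ref{lem:signaturerecursion} reduces the desired formula to the inductive hypothesis at $(c-1,\sigma+4)$, where $c-1 \geq 5$ is odd, and the exceptional term is handled via $\delta_{\sigma+4,2} = \delta_{\sigma,-2}$.

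The main bookkeeping hurdle I anticipate is keeping track of the $\pm 1$ corrections at $\sigma = \pm 2$. These appear because Lemma \ref{lem:signaturerecursion} is a clean three-term recursion with no exceptional cases, whereas the present lemma requires corrections precisely where the signature shift by $\pm 4$ crosses $0$. The induction is consistent precisely because the $+1$ correction at $(c,2)$ for $c$ odd and the $-1$ correction at $(c-1,-2)$ for $c-1$ even are linked by exactly the substitution $\sigma \mapsto \sigma - 4$ used in the reduction, so the two exceptional cases are images of each other under the recursive step.
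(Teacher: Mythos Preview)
Your proof is correct and follows essentially the same route as the paper's own argument: induction on $c$, with the base case $c=4$ verified directly from Example~\ref{ex:basecases}, and the inductive step obtained by applying Lemma~\ref{lem:signaturerecursion} at $(c,\sigma)$ and then invoking the inductive hypothesis at $(c-1,\sigma\mp 4)$. Your use of the Kronecker delta $\delta_{\sigma,\pm 2}$ to track the exceptional correction uniformly is a small notational improvement over the paper, which handles the $\sigma=2$ and $\sigma\neq 2$ subcases separately, but the logic is identical; the paper also verifies $c=5$ as a second base case, which is not strictly necessary since (as you observe) it already falls under the inductive step once $c=4$ is established.
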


\begin{proof}
We proceed by induction with the base cases handled by Example \ref{ex:basecases}:  the row corresponding to $c=4$ in Table \ref{tab:sig} does indeed follow from the row corresponding to $c=3$ by the second two cases in Equation \eqref{eq:recursion2}, and the row corresponding to $c=5$ there follows from the row corresponding to $c=4$ by the first two.

By Lemma \ref{lem:signaturerecursion}, for odd $c$ and $\sigma\neq 2$ we have that $s(c,\sigma)=s(c-1,\sigma-2)+s(c-2,\sigma-2)+s(c-2,\sigma)$.  So in order to show that $s(c,\sigma)=s(c-1,\sigma-2)+s(c-1,\sigma-4)$, we have only to show that $s(c-2,\sigma-2)+s(c-2,\sigma)=s(c-1,\sigma-4)$, which holds by the induction hypothesis. For the $\sigma = 2$ case, we still have the relation $s(c,\sigma)=s(c-1,\sigma-2)+s(c-2,\sigma-2)+s(c-2,\sigma)$. Specializing to $\sigma = 2$, we have $s(c,2)=s(c-1,0)+s(c-2,0)+s(c-2,2)$. By the inductive hypothesis, $s(c-2,0)+s(c-2,2) = s(c-1,-2)+1$. Thus $s(c,2)=s(c-1,0)+s(c-1,-2)+1$.

The case for even $c$ follows similarly.
\end{proof}

The rows in Table \ref{tab:sig} corresponding to even crossing numbers are symmetric about the $\sigma=0$ entry, and the rows corresponding to odd crossing numbers are mostly symmetric about the $\sigma=2$ and $\sigma=4$ entries, except $s(c,2)=s(c,4)+1$ in those rows. Lemma \ref{lem:signaturerecursion2} implies that these symmetries hold in general.

\begin{proposition}
\label{prop:sym}
    The number $s(c,\sigma)$ of words in $T(c)$ satisfies the following symmetries:
    \begin{alignat*}{3}
        s(c,\sigma) = & \; s(c,-\sigma) & \quad &\text{if $c$ is even,}\\
    s(c,2) = & \; s(c,4)+1 & \quad & \text{if $c$ is odd and $\sigma$ is 2 or 4, and}\\
        s(c,\sigma)  = & \;   s(c,6-\sigma)& \quad & \text{if $c$ is odd and $\sigma\geq 6$~\text{or}~$\sigma\leq0$.}    
    \end{alignat*}
\end{proposition}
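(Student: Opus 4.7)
The plan is to establish all three symmetries simultaneously by strong induction on $c\geq 3$, driven by Lemma \ref{lem:signaturerecursion2}. The base cases $c=3$ and $c=4$ follow by direct inspection of Table \ref{tab:sig} (see Example \ref{ex:basecases}). The inductive step splits according to the parity of $c$: the even step relies on the odd symmetries at $c-1$, while the odd step relies on the even symmetry at $c-1$, so all three statements propagate in tandem.

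For the inductive step with $c$ even, I would first handle the trivial case $\sigma=0$. For $\sigma\neq 0,\pm 2$, the uncorrected form of Lemma \ref{lem:signaturerecursion2} gives
\[ s(c,\sigma)=s(c-1,\sigma+2)+s(c-1,\sigma+4),\qquad s(c,-\sigma)=s(c-1,-\sigma+2)+s(c-1,-\sigma+4).\]
The reflection $\tau\mapsto 6-\tau$ sends $\sigma+2\leftrightarrow -\sigma+4$ and $\sigma+4\leftrightarrow -\sigma+2$, so for $|\sigma|\geq 4$, where all four arguments lie in $\{\tau\leq 0\}\cup\{\tau\geq 6\}$, the inductive odd reflection matches the two sums term by term. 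The remaining boundary $\sigma=\pm 2$ reduces to proving $s(c,2)=s(c,-2)$: expanding both sides via Lemma \ref{lem:signaturerecursion2} (with the $-1$ correction arising on the $\sigma=-2$ side) and invoking the inductive identities $s(c-1,6)=s(c-1,0)$ and $s(c-1,4)=s(c-1,2)-1$ should collapse both sides to the common value $s(c-1,0)+s(c-1,2)-1$.

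For the inductive step with $c$ odd, I would prove the asymmetry first: Lemma \ref{lem:signaturerecursion2} with the $+1$ correction yields $s(c,2)=s(c-1,0)+s(c-1,-2)+1$, while the uncorrected form gives $s(c,4)=s(c-1,2)+s(c-1,0)$; the inductive even symmetry $s(c-1,-2)=s(c-1,2)$ then delivers $s(c,2)-s(c,4)=1$ immediately. For the reflection $s(c,\sigma)=s(c,6-\sigma)$ with $\sigma\leq 0$ or $\sigma\geq 6$, neither $\sigma$ nor $6-\sigma$ equals the excluded value $2$, so the uncorrected recursion applies on both sides and the termwise even symmetries $s(c-1,4-\sigma)=s(c-1,\sigma-4)$ and $s(c-1,2-\sigma)=s(c-1,\sigma-2)$ finish the argument.

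The hard part is the bookkeeping at $\sigma=\pm 2$. The $\pm 1$ corrections in Lemma \ref{lem:signaturerecursion2} are precisely the mechanism that creates the single off-by-one entry $s(c,2)-s(c,4)=1$ in odd rows when one passes from an even row to an odd row, and then consumes it when passing from that odd row to the next even row. The proof therefore hinges on checking that those two corrections, together with the inductive odd asymmetry, interact so that the even symmetry is restored exactly and no stray $\pm 1$ survives. Once this cancellation is verified, the remaining matching of arguments under $\tau\mapsto-\tau$ or $\tau\mapsto 6-\tau$ is routine.
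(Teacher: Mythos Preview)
Your proposal is correct and follows essentially the same route as the paper: induction on $c$ with base cases $c=3,4$ from Example \ref{ex:basecases}, the inductive step driven by Lemma \ref{lem:signaturerecursion2}, a parity split in which the odd symmetries feed the even step and vice versa, and explicit bookkeeping at $\sigma=\pm 2$ where the $\pm 1$ corrections appear. Your treatment is in fact slightly more careful than the paper's in flagging exactly which $\sigma$-values require the corrected recursion and in verifying that all four arguments land in the range where the odd reflection applies; the paper handles these implicitly.
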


\begin{proof}
    Example \ref{ex:basecases} shows that the proposition is true when $c=3$ or $4$. We proceed by induction, with cases when $c$ is even or odd.
    
    Suppose $c$ is even. When $\sigma=2$,
    \[s(c,2) =  s(c-1,4)+s(c-1,6) =  (s(c-1,2)-1)+s(c-1,0) = s(c,-2).\]
    If $\sigma>2$, then we have
    \[s(c,\sigma) =  s(c-1,\sigma+2)+s(c-1,\sigma+4) =   s(c-1,4-\sigma)+s(c-1,2-\sigma) =   s(c,-\sigma),\]
    where in the previous two equations, the first and third equalities follow from Lemma \ref{lem:signaturerecursion2} and the second equality holds by the inductive hypothesis when $c$ is odd. 

    Suppose $c$ is odd. When $\sigma=2$,
   \[s(c,2)  =  s(c-1,0) + (s(c-1,-2) + 1)  =  s(c-1,0) + s(c-1,2) + 1  =  s(c,4) + 1.\]    
    If $\sigma \geq 6$, then we have
   \[
    s(c,\sigma)  =  s(c-1,\sigma-2) + s(c-1,\sigma-4)  =  s(c-1,2-\sigma) + s(c-1,4-\sigma)  =  s(c,6-\sigma),\]   
     where in the previous two equations, the first and third equalities follow from Lemma \ref{lem:signaturerecursion2} and the second equality holds by the inductive hypothesis when $c$ is even. 
\end{proof}

If we add consecutive rows in Table \ref{tab:sig} corresponding to $c=2m+1$ and $c=2m+2$, then we obtain the $n=2m-1$ row of Pascal's triangle. The following theorem shows this pattern continues.

\begin{theorem}
\label{thm:binomials}
Let $T_m=T(2m+1)\cup T(2m+2)$. Then there are exactly 
$\binom{2m-1}{k}=\binom{2m-1}{m-1+\frac{\sigma}{2}}$ words representing knots with signature $\sigma=2k-2m+2$. In other words, 
\[s(2m+1,\sigma) + s(2m+2,\sigma) = \binom{2m-1}{k}.\]
\end{theorem}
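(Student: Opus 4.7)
The plan is to induct on $m$, setting $f(m,\sigma) := s(2m+1,\sigma) + s(2m+2,\sigma)$ and adopting the convention $\binom{n}{j} = 0$ when $j \notin \{0,\ldots,n\}$, to establish the stronger identity $f(m,\sigma) = \binom{2m-1}{m-1+\sigma/2}$. The base case $m=1$ is immediate from Example \ref{ex:basecases}: $s(3,2) = s(4,0) = 1$ and all other entries vanish, matching $\binom{1}{0} = \binom{1}{1} = 1$.

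The heart of the argument is to show the Pascal-like recursion
\begin{equation*}
f(m+1,\sigma) = f(m,\sigma-2) + 2f(m,\sigma) + f(m,\sigma+2).
\end{equation*}
To derive it, apply Lemma \ref{lem:signaturerecursion} (unconditional for $c \geq 5$) to both the odd index $c = 2m+3$ and the even index $c = 2m+4$:
\begin{align*}
s(2m+3,\sigma) &= s(2m+2,\sigma-2) + s(2m+1,\sigma-2) + s(2m+1,\sigma),\\
s(2m+4,\sigma) &= s(2m+3,\sigma+2) + s(2m+2,\sigma+2) + s(2m+2,\sigma).
\end{align*}
Summing and regrouping pairs of $s(2m+1,\cdot) + s(2m+2,\cdot)$ terms yields
\begin{equation*}
f(m+1,\sigma) = f(m,\sigma) + f(m,\sigma-2) + s(2m+3,\sigma+2) + s(2m+2,\sigma+2).
\end{equation*}
Apply Lemma \ref{lem:signaturerecursion} a second time, now to $s(2m+3,\sigma+2) = s(2m+2,\sigma) + s(2m+1,\sigma) + s(2m+1,\sigma+2)$. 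The two trailing terms then combine into $f(m,\sigma) + f(m,\sigma+2)$, giving the claimed recursion.

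To close the induction, observe that two iterations of Pascal's rule give the binomial identity $\binom{2m+1}{k'} = \binom{2m-1}{k'-2} + 2\binom{2m-1}{k'-1} + \binom{2m-1}{k'}$. Taking $k' = m + \sigma/2$, the three binomials on the right match $f(m,\sigma-2)$, $f(m,\sigma)$, $f(m,\sigma+2)$ respectively by the inductive hypothesis, while the left side is $\binom{2(m+1)-1}{(m+1)-1+\sigma/2}$, which is the desired value of $f(m+1,\sigma)$. The main obstacle is simply the bookkeeping that produces the symmetric Pascal-like recursion: working with the unconditional Lemma \ref{lem:signaturerecursion} rather than Lemma \ref{lem:signaturerecursion2} neatly sidesteps the $\sigma = \pm 2$ exceptional cases, and the convention on binomials with out-of-range lower index absorbs the boundary values of $\sigma$ where $f(m,\sigma) = 0$.
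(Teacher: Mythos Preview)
Your proof is correct and follows essentially the same approach as the paper: induction on $m$ with base case from Example~\ref{ex:basecases}, a derivation of the Pascal-like recursion $f(m+1,\sigma)=f(m,\sigma-2)+2f(m,\sigma)+f(m,\sigma+2)$ from Lemma~\ref{lem:signaturerecursion}, and closure via two applications of Pascal's rule. The only differences are cosmetic---you parametrize by $\sigma$ rather than $k$ and organize the algebra slightly differently (summing first and then expanding $s(2m+3,\sigma+2)$, whereas the paper expands each summand separately before recombining)---but the logical structure is identical.
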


\begin{proof}
Let $s_{m,k}=s(2m+1,2k-2m+2)+s(2m+2,2k-2m+2)$.  We show $s_{m,k}=\binom{2m-1}{k}$.

The base case where $m=1$, and so $c=3$ and $4$, is satisfied by Example \ref{ex:basecases}.  Suppose that $s_{m-1,k}=\binom{2m-3}{k}$ for $0\leq k \leq 2m-3$ or 0 otherwise.

We first use Lemma \ref{lem:signaturerecursion} and then reorganize indices to utilize the $s_{m,k}$ notation: 
\begin{align}
\begin{split}
\label{eq:sigrec1}
s(2m+1,2k-2m+2) = & \; s(2m,2k-2m)+s(2m-1,2k-2m)\\
& \; +s(2m-1,2k-2m+2)  \\
 = &\; s(2m,2(k-2)-2(m-1)+2)\\
& \; + s(2m-1,2(k-2)-2(m-1)+2)  \\
& \; +s(2m-1,2k-2m+2) \\
 = & \; s_{m-1,k-2}+s(2m-1,2k-2m+2),
\end{split}
\end{align}
since $2k-2m=2(k-2)-2(m-1)+2$.  Similarly, and also by applying Equation \eqref{eq:sigrec1}:
\begin{align}
\begin{split}
\label{eq:sigrec2}
s(2m+2,2k-2m+2)  = & \; s(2m+1,2k-2m+4)+s(2m,2k-2m+4)\\
& \; +s(2m,2k-2m+2)  \\
 = &\; s(2m+1,2(k+1)-2m+2)+s(2m,2k-2(m-1)+2)\\
& \; +s(2m,2k-2m+2)  \\
 = &\; s_{m-1,k-1} + s(2m-1,2(k+1)-2m+2)  \\
& \; +s(2m,2k-2(m-1)+2)+s(2m,2k-2m+2) \\
 = &\; s_{m-1,k-1} + s_{m-1,k} + s(2m,2k-2m+2)
\end{split}
\end{align}
since $2(k+1)-2m+2=2k-2m+4=2k-2(m-1)+2$.

Putting Equations \eqref{eq:sigrec1} and \eqref{eq:sigrec2} 
together, we have
\begin{align*}
s_{m,k} & = s(2m+1,2k-2m+2)+s(2m+2,2k-2m+2)\\
& = s_{m-1,k-2}+s(2m-1,2k-2m+2)\\
& \; + s_{m-1,k-1} + s_{m-1,k} + s(2m,2k-2m+2)\\
& = s_{m-1,k-2} + s_{m-1,k-1} + s_{m-1,k}\\
& \; + s(2m-1,2(k-1)-2(m-1)+2) + s(2m,2(k-1)-2(m-1)+2)\\
& = s_{m-1,k-2} + 2s_{m-1,k-1} + s_{m-1,k},
\end{align*}
since $2k-2m+2=2(k-1)-2(m-1)+2$.

By the inductive hypothesis, we have
\begin{align*}
s_{m,k} & = \left[\binom{2m-3}{k-2} + \binom{2m-3}{k-1}\right]+\left[\binom{2m-3}{k-1} + \binom{2m-3}{k}\right]\\
& = \binom{2m-2}{k-1} + \binom{2m-2}{k} \; = \; \binom{2m-1}{k},
\end{align*}
completing the proof.
\end{proof}

After the first version of this paper was posted as a preprint, the first and third authors together with students \cite{BCDFMST} used the results in this section to show that the signature distribution approaches a normal distribution.

\section{Average signature}
\label{sec:average}

In this section, we prove Theorem \ref{thm:avgsignature}, which states that the average absolute value of the signature of $2$-bridge knots with crossing number $c$ satisfies
\[\lim_{c\to\infty}\left(\overline{|\sigma|}(c)-\sqrt{\frac{2c}{\pi}}\right)=0.\]
 First we prove Theorem \ref{thm:avgsignature} assuming several technical lemmas. In Subsection \ref{subsec:technical}, we state and prove the technical lemmas needed for the proof.

Define an equivalence relation $\approx$ on functions $f_1$ and $f_2$ by $f_1(c)\approx f_2(c)$ if $\lim_{c\to\infty}f_1(c)-f_2(c)=0$. Then Theorem \ref{thm:avgsignature} says that $\overline{|\sigma|}(c)\approx \sqrt{\frac{2c}{\pi}}$. 

\begin{definition}
\label{def:tot}
    The \textit{total absolute signature} $\tot(c)$ from $T(c)$ is defined as 
    \[\tot(c) = \sum_{w\in T(c)} |\sigma(K_w)| =  \sum_{i=-\infty}^\infty |2i| \; s(c,2i),\]
    and the \textit{total absolute palindromic signature} $\tot_p(c)$ from $T_p(c)$ is defined as 
    \[\tot_p(c) = \sum_{w\in T_p(c)} |\sigma(K_w)|.\]
\end{definition}

With this notation, $\overline{|\sigma|}(c)$ can be computed as
\[\overline{|\sigma|}(c) = \frac{\tot(c) + \tot_p(c)}{2|\mathcal{K}(c)|}.\]
The following proposition shows that the contribution $\tot_p(c)$ to the average signature coming from palindromic words does not change the asymptotic behavior of $\overline{|\sigma|}(c)$.
\begin{proposition}
\label{prop:ignorepalindromes}
    The average $\overline{|\sigma|}(c)$ of the absolute value of the signature of $2$-bridge knots with crossing number $c$ satisfies
    \[\overline{|\sigma|}(c)\approx \frac{\tot(c)}{2|\mathcal{K}(c)|}.\]
\end{proposition}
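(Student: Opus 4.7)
The plan is to use the formula $\overline{|\sigma|}(c) = \frac{\tot(c) + \tot_p(c)}{2|\mathcal{K}(c)|}$ directly, so that the difference we want to show tends to zero becomes
\[
\overline{|\sigma|}(c) - \frac{\tot(c)}{2|\mathcal{K}(c)|} \;=\; \frac{\tot_p(c)}{2|\mathcal{K}(c)|}.
\]
Thus the entire proposition reduces to showing that $\tot_p(c)/|\mathcal{K}(c)| \to 0$ as $c \to \infty$. I would bound the numerator and the denominator separately.

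For the denominator, Theorem \ref{thm:ernstsumners} gives $|\mathcal{K}(c)| \geq \tfrac{1}{3}(2^{c-3} - 1)$ in every congruence class, so $|\mathcal{K}(c)|$ grows like $2^{c-3}/3$. For the numerator, I would first bound the signature term: combining Inequalities \eqref{eq:sigbound} and \eqref{eq:crossing} yields $|\sigma(K)| < c$ for every knot $K$ of crossing number $c$, so
\[
\tot_p(c) \;=\; \sum_{w \in T_p(c)} |\sigma(K_w)| \;<\; c \cdot |T_p(c)|.
\]
It therefore suffices to show that $|T_p(c)|$ grows at most like $2^{c/2}$, i.e., essentially the square root of $|T(c)|$.

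To bound $|T_p(c)|$, I would use the defining symmetry of palindromic words in Definition \ref{def:tc}. For odd $c$, a palindrome in $T(c)$ is determined entirely by the exponents $\varepsilon_1, \varepsilon_2, \ldots, \varepsilon_{(c+1)/2}$, with $\varepsilon_1$ forced to equal $1$; each remaining $\varepsilon_i$ lies in $\{1,2\}$, giving at most $2^{(c-1)/2}$ choices. For even $c$ the same symmetry (after swapping $+$ and $-$) yields the bound $|T_p(c)| \leq 2^{c/2 - 1}$. In either case $|T_p(c)| = O(2^{c/2})$.

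Combining these bounds,
\[
\frac{\tot_p(c)}{2|\mathcal{K}(c)|} \;\leq\; \frac{c \cdot 2^{c/2}}{\tfrac{2}{3}(2^{c-3}-1)} \;=\; O\!\left(\frac{c}{2^{c/2}}\right) \;\longrightarrow\; 0,
\]
which completes the proof. The argument is essentially elementary; the only mild obstacle is making the count of palindromic words cleanly uniform across the four congruence classes of $c$ modulo $4$ and the residue condition $\ell \equiv 1 \pmod 3$ on the word length, but these conditions only cut $|T_p(c)|$ down further and therefore do not affect the asymptotic bound.
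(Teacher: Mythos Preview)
Your proof is correct and follows essentially the same approach as the paper: both reduce to showing $\tot_p(c)/|\mathcal{K}(c)|\to 0$ by bounding $|\sigma(K)|$ linearly in $c$ and $|T_p(c)|$ by $O(2^{c/2})$. The only cosmetic differences are that the paper cites the exact formula $t_p(c)=\tfrac{1}{3}\bigl(2^{\lfloor (c-1)/2\rfloor}-(-1)^{\lfloor (c-1)/2\rfloor}\bigr)$ from \cite{CohLow} rather than deriving the $O(2^{c/2})$ bound from scratch, and it obtains $|\sigma(K)|\leq c-1$ from Lemma~\ref{lem:signaturerecursion2} instead of from the chain $|\sigma(K)|\leq 2g_4(K)<c$ via Inequalities~\eqref{eq:sigbound} and~\eqref{eq:crossing}.
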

\begin{proof}
    Lemma \ref{lem:signaturerecursion2}  implies that if $K$ is a $2$-bridge knot with crossing number $c$, then $0\leq |\sigma(K)|\leq c-1$. The first two authors \cite{CohLow} proved the number of palindromic words $t_p(c)$ satisfies
    \[t_p(c)=\frac{1}{3}\left(2^{\left\lfloor\frac{c-1}{2}\right\rfloor}-(-1)^{\left\lfloor\frac{c-1}{2}\right\rfloor}\right),\]
    and therefore
    \[0\leq \tot_p(c)\leq \frac{1}{3}(c-1)\left(2^{\left\lfloor\frac{c-1}{2}\right\rfloor}-(-1)^{\left\lfloor\frac{c-1}{2}\right\rfloor}\right).\]
    Therefore Theorem \ref{thm:ernstsumners} implies that $\displaystyle\lim_{c\to\infty}\frac{\tot_p(c)}{2|\mathcal{K}(c)|}=0$, and thus
    \[\overline{|\sigma|}(c) = \frac{\tot(c)+\tot_p(c)}{2|\mathcal{K}(c)|}\approx \frac{\tot(c)}{2|\mathcal{K}(c)|},\]
    as desired.
\end{proof}
Proposition \ref{prop:ignorepalindromes} can be modified to show that palindromes do not contribute to the asymptotic behavior of the average value of many different invariants over the set of $2$-bridge knots. For example, see the work on genus \cite{CohLow, CohLowURSI}, braid index \cite{LowURSI:braids, SuzTra:braids}, and  crosscap number \cite{CohKinLowShaVan}.

Section \ref{sec:signature} gives various ways of counting the number of 2-bridge knots with a fixed signature and a fixed crossing number (or a pair of fixed crossing numbers). Lemmas \ref{lem:signaturerecursion} and \ref{lem:signaturerecursion2} and Theorem \ref{thm:binomials} can be used to estimate $\tot(c)$, leading to a proof of Theorem \ref{thm:avgsignature}. Although the proof of Theorem \ref{thm:avgsignature} relies on a few technical lemmas, we present it now, and use the remainder of this section to state and prove the various technical lemmas.

\begin{proof}[Proof of Theorem \ref{thm:avgsignature}]
Proposition \ref{prop:ignorepalindromes} states that $\overline{|\sigma|}(c)\approx \frac{\tot(c)}{2|\mathcal{K}(c)|}$. Theorem \ref{thm:binomials} leads to the following formula, found in Lemma \ref{lem:binomialsum}, for the 2-row total $\tot_2(m)$ of the absolute values of the signatures of $2$-bridge knots coming from $T_m=T(2m+1)\cup T(2m+2)$:
\[\tot_2(m) = \sum_{w\in T_m} |\sigma(K_w)| = m\binom{2m}{m}.\]
Lemma \ref{lem:signaturerecursion2} is used to find a relationship between $\tot(2m+1)$ and $\tot(2m+2)$, which is carefully stated in Lemma \ref{lem:totrecursive}. This relationship along with the fact that $\tot_2(m)=\tot(2m+1)+\tot(2m+2)$ implies the following inequality for $j=1$ and $2$:
\begin{equation}
\label{eq:totinequality}
   jm\binom{2m}{m}  - \varepsilon(2m+j)\leq 3\tot(2m+j)\leq jm\binom{2m}{m}+\varepsilon(2m+j)
    \end{equation}
where $c=2m+j$ and $\displaystyle\lim_{c\to\infty}\frac{\varepsilon(c)}{2|\mathcal{K}(c)|}=0$. Lemma \ref{lem:totinequality} gives a formula for $\varepsilon(c)$ and a proof of Inequality \eqref{eq:totinequality}.  Therefore, Inequality \eqref{eq:totinequality} implies that
\begin{equation}
    \label{eq:sigav1}
    \overline{|\sigma|}(2m+j) \approx \frac{\tot(c)}{2|\mathcal{K}(c)|} \approx \frac{jm \binom{2m}{m}}{6|\mathcal{K}(c)|}
\end{equation}
for $j=1$ and $2$.

Multiplying the inequality in Lemma \ref{lem:Wallis} by $\frac{jm}{6|\mathcal{K}(c)|}$ where $j=1$ or $2$ yields
\begin{equation}
\label{eq:UseWallis}
\frac{jm}{6|\mathcal{K}(c)|} \cdot \frac{4^m}{\sqrt{\pi m}} \left(1-\frac{1}{4m}\right) <  \frac{jm \binom{2m}{m}}{6|\mathcal{K}(c)|} < \frac{jm}{6|\mathcal{K}(c)|} \cdot \frac{4^m}{\sqrt{\pi m}}
\end{equation}
The Ernst-Sumners formula for $|\mathcal{K}(c)|$ in Theorem \ref{thm:ernstsumners} implies that
\[2^{c-2}\leq 6|\mathcal{K}(c)| \leq 2^{c-2}+2^{\left\lfloor\frac{c-1}{2}\right\rfloor}+2.\]
Therefore
\begin{equation}
\label{eq:UseES}
\frac{1}{2^{2m-1}+2^m+2} \leq \frac{j}{6|\mathcal{K}(c)|} \leq \frac{1}{2^{2m-1}},    
\end{equation}
where $j=1$ or $2$ and $c=2m+j$. Hence by Inequalities \eqref{eq:UseWallis} and \eqref{eq:UseES}
\[\frac{4^m}{2^{2m-1}+2^m+2}\sqrt{\frac{m}{\pi}}\left(1-\frac{1}{4m}\right) < \frac{jm \binom{2m}{m}}{6|\mathcal{K}(c)|} < 2\sqrt{\frac{m}{\pi}}. \]

Since
\begin{align*}
   &\; \lim_{m\to\infty} \left(2\sqrt{\frac{m}{\pi}} - \frac{4^m}{2^{2m-1}+2^m+2}\sqrt{\frac{m}{\pi}}\left(1-\frac{1}{4m}\right) \right)\\
   = &\; \lim_{m\to\infty}  \frac{4^{m}+8m2^m+16m}{ 4m(2^{2m-1}+2^m+2)} \sqrt{\frac{m}{\pi}}\\
    = &\; \lim_{m\to\infty}  \frac{2^{2m}+8m2^m+16m}{4\sqrt{m\pi}(2^{2m-1}+2^m+2)}  =0,
\end{align*}
Equation \eqref{eq:sigav1} implies
\[\overline{|\sigma|}(2m+j)\approx \frac{jm \binom{2m}{m}}{6|\mathcal{K}(c)|} \approx 2\sqrt{\frac{m}{\pi}}. \]
Since $c=2m+j$ for $j=1$ or $2$, we have
\[\overline{|\sigma|}(c)=\overline{|\sigma|}(2m+j) \approx 2\sqrt{\frac{m}{\pi}} = \sqrt{\frac{2(c-j)}{\pi}}\approx \sqrt{\frac{2c}{\pi}},\]
proving the result.
\end{proof}

\subsection{Technical lemmas needed for Theorem \ref{thm:avgsignature}} 
\label{subsec:technical}

In this subsection, we prove the lemmas used in the proof of Theorem \ref{thm:avgsignature}.

Theorem \ref{thm:binomials} allows us to exactly compute the sum of the absolute values of the signatures of knots coming from $T_m=T(2m+1)\cup T(2m+2)$.
\begin{lemma}
\label{lem:binomialsum}
    Let $\tot_2(m)$ be the 2-row total of the absolute values of the signatures of $2$-bridge knots coming from $T_m=T(2m+1)\cup T(2m+2)$, that is, let 
    \[\tot_2(m) =\tot(2m+1)+\tot(2m+2) = \sum_{w\in T_m} |\sigma(K_w)|.\]
    Then
    \[ \tot_2(m) = m\binom{2m}{m}.\]
\end{lemma}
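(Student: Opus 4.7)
The plan is to reduce the lemma to a classical binomial identity. By Theorem \ref{thm:binomials}, the number of words $w \in T_m$ whose associated knot has signature $\sigma = 2k - 2m + 2$ is $\binom{2m-1}{k}$, for $k = 0, 1, \ldots, 2m-1$. Therefore
\[\tot_2(m) \;=\; \sum_{k=0}^{2m-1} |2k - 2m + 2|\binom{2m-1}{k} \;=\; 2\sum_{k=0}^{2m-1}|k-(m-1)|\binom{2m-1}{k}.\]
Since $\binom{2m}{m} = \binom{2m-1}{m-1} + \binom{2m-1}{m} = 2\binom{2m-1}{m}$, the lemma is equivalent to the identity
\[\sum_{k=0}^{2m-1}|k-(m-1)|\binom{2m-1}{k} \;=\; m\binom{2m-1}{m}. \qquad (\star)\]

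Next, I would split the sum at the kink $k = m-1$. The lower piece $\sum_{k=0}^{m-2}(m-1-k)\binom{2m-1}{k}$ is reindexed via the symmetry $\binom{2m-1}{k} = \binom{2m-1}{2m-1-k}$ (setting $k' = 2m-1-k$), turning it into $\sum_{k=m+1}^{2m-1}(k-m)\binom{2m-1}{k}$. Combining this with the upper piece and isolating the $k=m$ term gives
\[\text{LHS of }(\star) \;=\; \binom{2m-1}{m} \;+\; \sum_{k=m+1}^{2m-1}(2k-2m+1)\binom{2m-1}{k}.\]

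To finish, I would evaluate the remaining sum using the absorption identity $k\binom{2m-1}{k} = (2m-1)\binom{2m-2}{k-1}$ together with the standard tail identities $\sum_{k=m}^{2m-1}\binom{2m-1}{k} = 2^{2m-2}$ and $\sum_{j=m}^{2m-2}\binom{2m-2}{j} = 2^{2m-3} - \tfrac{1}{2}\binom{2m-2}{m-1}$, both of which follow from row-sum and middle-symmetry of Pascal's triangle. Substitution and the relation $(2m-1)\binom{2m-2}{m-1} = m\binom{2m-1}{m}$ cause the powers of $2$ to cancel, leaving $(m-1)\binom{2m-1}{m}$ for the inner sum. Adding back the isolated $\binom{2m-1}{m}$ term yields $m\binom{2m-1}{m}$, establishing $(\star)$.

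The proof is essentially bookkeeping, with no conceptual obstacle once Theorem \ref{thm:binomials} is available; indeed, $(\star)$ is the well-known evaluation of the mean absolute deviation of a $\mathrm{Binomial}(2m-1,1/2)$ distribution from its median, rescaled by $2^{2m-1}$. The only subtlety to watch is keeping track of parity and indexing so that the two halves of the split sum recombine correctly after applying the symmetry $k\leftrightarrow 2m-1-k$.
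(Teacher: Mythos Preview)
Your proof is correct and follows essentially the same approach as the paper: both start from Theorem~\ref{thm:binomials}, exploit the symmetry $\binom{2m-1}{k}=\binom{2m-1}{2m-1-k}$, apply the absorption identity $k\binom{n}{k}=n\binom{n-1}{k-1}$, and finish with the half-row sums of Pascal's triangle. The only cosmetic difference is that the paper passes through row $2m$ via Pascal's rule before absorbing, whereas you stay in row $2m-1$ throughout; your organization is slightly more streamlined, and your remark identifying $(\star)$ as the mean absolute deviation of a $\mathrm{Binomial}(2m-1,\tfrac12)$ from its median is a nice conceptual gloss that the paper does not mention.
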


\begin{proof}
Theorem \ref{thm:binomials} implies that
\[\tot_2(m) =  \sum_{i=1}^m 2i \binom{2m-1}{m-1+i} + \sum_{i=1}^{m-1} 2i \binom{2m-1}{m-1-i}.\]
Since $\binom{2m-1}{j} = \binom{2m-1}{2m-1-j}$, it follows that $\binom{2m-1}{m-1+i} = \binom{2m-1}{m-i}$.  Then 

\begin{align*}
\tot_{2}(m) = & \; \sum_{i=1}^m 2i \binom{2m-1}{m-i} + \sum_{i=1}^{m-1} 2i \binom{2m-1}{m-1-i} \\
= & \; 2m\binom{2m-1}{0}+2\sum_{i=1}^{m-1} i \left[\binom{2m-1}{m-i} + \binom{2m-1}{m-1-i}\right] = 2m+2\sum_{i=1}^{m-1} i \binom{2m}{m-i}.
\end{align*}

Setting $j=m-i$ yields
\begin{align*}
\tot_2(m)= & \; 2m+2\sum_{j=1}^{m-1} (m-j) \binom{2m}{j}\\
= & \; 2m\binom{2m}{0}+2m\sum_{j=1}^{m-1} \binom{2m}{j}-2\sum_{j=1}^{m-1} j \binom{2m}{j}\\
= & \; 2m\sum_{j=0}^{m-1} \binom{2m}{j}-2\sum_{j=1}^{m-1} j \binom{2m}{j}.
\end{align*}

Since $j\binom{2m}{j} = (2m)\binom{2m-1}{j-1}$, it follows that
\begin{align*}
\tot_2(m) = & \; 2m\sum_{j=0}^{m-1} \binom{2m}{j}-4\sum_{j=1}^{m-1} m \binom{2m-1}{j-1} \\
= & \; 2m\sum_{j=0}^{m-1} \binom{2m}{j}-4m\sum_{k=0}^{m-2}  \binom{2m-1}{k}.
\end{align*}

Since $\sum_{j=0}^{m-1}\binom{2m}{j}$ is the sum of all terms before the central term in an even row of Pascal's triangle, it follows that
\[\sum_{ j=0}^{m-1}\binom{2m}{j} = \frac{1}{2}\left(2^{2m}-\binom{2m}{m}\right).\]
Also, since $\sum_{k=0}^{m-1}\binom{2m-1}{k}$ is the sum of the first half of the entries in an odd row of Pascal's triangle, it follows that
\[ \sum_{k=0}^{m-1}\binom{2m-1}{k} = 2^{2m-2},\]
and so
\[ \sum_{k=0}^{m-2}\binom{2m-1}{k} = 2^{2m-2}-\binom{2m-1}{m-1}.\]
Therefore
\begin{align*}
\tot_2(m)= & \; 2m\cdot \frac{1}{2}\left[ 2^{2m}-\binom{2m}{m} \right]-4m\left[ 2^{2m-2}-\binom{2m-1}{m-1} \right]\\
= & \; m2^{2m}-m\binom{2m}{m} -m2^{2m}+4m\binom{2m-1}{m-1} = m\binom{2m}{m}\\
\end{align*}
since $4m\binom{2m-1}{m-1} = 2m\binom{2m}{m}$.
\end{proof}

Lemma \ref{lem:binomialsum} would allow us to compute the average absolute value of the signature of 2-bridge knots with crossing numbers $c=2m+1$ or $2m+2$. In order to compute this average for a single crossing number instead of two crossing numbers at once, we prove the following lemma relating $\tot(2m+2)$ with $\tot(2m+1)$.
\begin{lemma}
    \label{lem:totrecursive}
    If $c$ is even, then $\tot(c) = 2\tot(c-1)-2s(c-1,2)-6s(c-1,4) -2$.
\end{lemma}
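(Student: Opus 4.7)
The plan is to exploit the two sign-symmetries from Proposition~\ref{prop:sym} together with the binomial-like recursion of Lemma~\ref{lem:signaturerecursion2}, and then reconcile the resulting expressions via the correction $s(c-1,2)=s(c-1,4)+1$. Concretely, since $c$ is even, Proposition~\ref{prop:sym} gives $s(c,\sigma)=s(c,-\sigma)$, so
\[
\tot(c)=\sum_{\sigma}|\sigma|s(c,\sigma)=2\sum_{\sigma\geq 2,\ \text{even}}\sigma\, s(c,\sigma).
\]
For every such $\sigma$ we have $\sigma\neq -2$, so the first case of Lemma~\ref{lem:signaturerecursion2} applies cleanly: $s(c,\sigma)=s(c-1,\sigma+2)+s(c-1,\sigma+4)$. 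Substituting and reindexing the two resulting sums ($\tau=\sigma+2$ in one, $\tau=\sigma+4$ in the other), I expect the $\tau=4$ term to appear only once while every $\tau\geq 6$ appears in both sums, yielding
\[
\tot(c)\;=\;4\,s(c-1,4)\;+\;\sum_{\tau\geq 6,\ \text{even}}(4\tau-12)\,s(c-1,\tau).
\]

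Next I turn to $2\tot(c-1)$. Because $c-1$ is odd, Proposition~\ref{prop:sym} gives $s(c-1,\sigma)=s(c-1,6-\sigma)$ whenever $\sigma\leq 0$ or $\sigma\geq 6$, together with the isolated identity $s(c-1,2)=s(c-1,4)+1$. Using the symmetry to fold the $\sigma\leq 0$ contribution into the $\sigma\geq 6$ range (the substitution $\sigma\mapsto 6-\sigma$ turns the weight $|\sigma|=-\sigma$ into $\sigma-6$) produces
\[
\tot(c-1)\;=\;2\,s(c-1,2)+4\,s(c-1,4)+\sum_{\sigma\geq 6,\ \text{even}}(2\sigma-6)\,s(c-1,\sigma).
\]
Doubling this and subtracting from the expression for $\tot(c)$ above, the tails $\sum_{\sigma\geq 6}(4\sigma-12)s(c-1,\sigma)$ cancel exactly, leaving
\[
\tot(c)-2\tot(c-1)\;=\;-4\,s(c-1,2)-4\,s(c-1,4).
\]

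Finally, I rewrite this remainder in the form asserted by the lemma using $s(c-1,2)=s(c-1,4)+1$:
\[
-4\,s(c-1,2)-4\,s(c-1,4)\;=\;-2\,s(c-1,2)-2\bigl(s(c-1,4)+1\bigr)-4\,s(c-1,4)\;=\;-2\,s(c-1,2)-6\,s(c-1,4)-2,
\]
which is precisely the claimed recursion. The main obstacle is purely bookkeeping: one must be careful to track the asymmetric exceptional terms ($s(c-1,2)$ versus $s(c-1,4)$, and the ``$+1$'' that distinguishes them) through the reindexing, and to verify that the $\sigma=0$ contribution to $\tot(c)$ is absent and the $\sigma\geq 6$ tails cancel exactly; a sanity check against the $c=6$ row of Table~\ref{tab:sig}, where $\tot(6)=4=2\cdot 8-2\cdot 2-6\cdot 1-2$, confirms the sign pattern.
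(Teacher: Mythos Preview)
Your proof is correct. Both your argument and the paper's rest on the same two ingredients—Lemma~\ref{lem:signaturerecursion2} and the parity symmetries of Proposition~\ref{prop:sym}—but you organize the computation differently. The paper splits $\tot(c)=\tot_+(c)+\tot_-(c)$ into contributions from positive and negative signatures, applies Lemma~\ref{lem:signaturerecursion2} to each half separately (picking up the $-1$ correction at $\sigma=-2$ in the negative half), and only at the very end uses the odd-row symmetry $s(c-1,6+2i)=s(c-1,-2i)$ to cancel the residual sums $\sum_i s(c-1,\pm 2i)$. You instead invoke the even-row symmetry $s(c,\sigma)=s(c,-\sigma)$ at the outset to fold $\tot(c)$ onto $\sigma\geq 2$, and likewise fold $\tot(c-1)$ onto $\sigma\geq 2$ via the odd-row symmetry, so that the $\sum_{\tau\geq 6}(4\tau-12)s(c-1,\tau)$ tails visibly match and drop out upon subtraction. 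Your route is a bit shorter and avoids tracking the two halves and the $\sigma=-2$ exceptional case of Lemma~\ref{lem:signaturerecursion2} altogether; the paper's route is more mechanical but requires slightly more bookkeeping before the symmetry is brought in.
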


\begin{proof}
The total absolute signature $\tot(c)$ from Definition \ref{def:tot} can be expressed as $\tot(c)=\tot_+(c) + \tot_-(c)$, where
$\tot_\pm(c) =  \sum_{i=1}^\infty 2i\; s(c,\pm 2i).$

    Lemma \ref{lem:signaturerecursion2} implies that if $c$ is even and $\sigma \neq -2$, then $s(c,\sigma) = s(c-1, \sigma +2) + s(c-1, \sigma + 4)$. Thus
\begin{align}
\label{eq:totpos}
\tot_+(c) = & \; \sum_{i=1}^\infty 2i\; s(c,2i) = \sum_{i=1}^\infty 2i \left[ s(c-1,2i+2) + s(c-1,2i+4) \right] \nonumber \\
= & \; \sum_{i=1}^\infty 2i\; s(c-1,2i+2) + \sum_{i=1}^\infty 2i\; s(c-1,2i+4).
\end{align}
    Substituting $j=i+1$ in the first summation of Equation \eqref{eq:totpos} yields
\begin{align*}
\sum_{i=1}^\infty 2i\; s(c-1,2i+2) = & \; \sum_{j=2}^\infty 2(j-1) s(c-1,2j) 
= \sum_{j=2}^\infty 2j\; s(c-1,2j) -2 \sum_{j=2}^\infty  s(c-1,2j)\\
= & \; \tot_+(c-1) -2\sum_{j=1}^\infty s(c-1,2j).
\end{align*}

 Substituting $\ell=i+2$ in the second summation of Equation \eqref{eq:totpos} yields
\begin{align*}
\sum_{i=1}^\infty 2i \; s(c-1,2i+4) = & \; \sum_{\ell=3}^\infty 2(\ell-2) s(c-1,2 \ell) 
=  \sum_{\ell=3}^\infty 2\ell \; s(c-1,2 \ell) -4 \sum_{\ell=3}^\infty  s(c-1,2 \ell) \\
= & \; \tot_+(c-1) - 2s(c-1,2) - 4s(c-1,4) - 4 \sum_{\ell=3}^\infty  s(c-1,2 \ell)\\
= & \; \tot_+(c-1) +2s(c-1,2) - 4\sum_{\ell=1}^\infty s(c-1,2\ell).
\end{align*}
Therefore 
\[\tot_+(c) = 2\tot_+(c-1) +2s(c-1,2)-6\sum_{i=1}^\infty s(c-1,2i).\]
We now find similar expressions for $\tot_-(c)$.
  Lemma \ref{lem:signaturerecursion2} implies that $s(c, -2) = s(c-1,0) + s(c-1,2) -1$, and thus 
\begin{equation}
\label{eq:totneg}
\tot_-(c)=\sum_{i=1}^\infty 2i \; s(c,-2i) = \sum_{i=1}^\infty 2i\; s(c-1,-2i+ 2) + \sum_{i=1}^\infty 2i\; s(c-1,-2i+ 4) - 2.
\end{equation}

Substituting $j=i-1$ in the first summation of Equation \eqref{eq:totneg} yields
\begin{align*}
\sum_{i=1}^\infty 2i\; s(c-1,-2i+ 2) = & \sum_{j=0}^\infty 2(j+1) s(c-1,-2j)\\
= & \; \sum_{j=0}^\infty 2j \; s(c-1,-2j) + 2\sum_{j=0}^\infty s(c-1,-2j)\\
= & \; \tot_-(c-1) + 2 \sum_{j=0}^\infty s(c-1,-2j).
\end{align*}

Substituting $\ell=i-2$ in the second summation of Equation \eqref{eq:totneg} yields 
\begin{align*}
\sum_{i=1}^\infty 2i \; s(c-1,-2i+ 4) = & \sum_{\ell=-1}^\infty 2(\ell + 2) s(c-1,-2\ell)\\
= & \; \sum_{\ell = -1}^\infty 2\ell \; s(c-1,-2\ell) + 4 \sum_{\ell = -1}^\infty  s(c-1,-2\ell)\\
= & \; -2s(c-1,2) + \tot_-(c-1) + 4s(c-1,2) +4 \sum_{\ell=0}^\infty s(c-1,-2\ell)\\
= & \; \tot_-(c-1) + 2s(c-1,2) +  4 \sum_{\ell=0}^\infty s(c-1,-2\ell).
\end{align*}
Therefore
\[\tot_-(c) = 2\tot_-(c-1) + 6 \sum_{i=0}^\infty s(c-1,-2i) + 2s(c-1,2) - 2.\]

Since $c-1$ is odd, by
Proposition \ref{prop:sym} we have $s(c-1, 6+2i) = s(c-1,-2i)$ for all $i\geq 0$, and thus $\sum_{i=3}^\infty s(c-1,2i) = \sum_{i=0}^\infty s(c-1,-2i).$ Hence the total signature $\tot(c)$ can be expressed as

\begin{align*}
\tot(c) = & \tot_+(c) + \tot_-(c)\\
= & \;2\tot_+(c-1) +2s(c-1,2)-6\sum_{i=1}^\infty s(c-1,2i) \\
& \;+ 2\tot_-(c-1) + 6 \sum_{i=0}^\infty s(c-1,-2i) + 2s(c-1,2) - 2 \\
= & \; 2\tot(c-1) -2s(c-1,2) - 6s(c-1,4) - 2,
\end{align*}
completing the proof. 
\end{proof}

Lemma \ref{lem:totrecursive} yields the inequalities in the following lemma for $\tot(2m+1)$ and $\tot(2m+2)$.
\begin{lemma}
    \label{lem:totinequality}
    The sum $\tot(c)$ of the absolute value of the signatures of knots coming from $T(c)$ satisfies the following inequality for $j=1$ and $2$:
    \[ jm\binom{2m}{m} - \varepsilon(2m+j) \leq 3\tot(2m+j)\leq jm\binom{2m}{m}+\varepsilon(2m+j),\]
 where $\varepsilon(2m+j) = 2\binom{2m-1}{m}+6\binom{2m-1}{m+1}+2$. Moreover, if $c=2m+1$ or $c=2m+2$, then
        \[\lim_{m\to\infty} \frac{\varepsilon(c)}{2|\mathcal{K}(c)|}=\lim_{m\to\infty} \frac{1}{2|\mathcal{K}(c)|} \left(2\binom{2m-1}{m}+6\binom{2m-1}{m+1}+2 \right) = 0.\]
\end{lemma}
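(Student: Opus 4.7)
The plan is to combine the two previous lemmas to solve for $\tot(2m+1)$ and $\tot(2m+2)$ individually (up to a small error) and then bound the error using Theorem \ref{thm:binomials}. First I would substitute Lemma \ref{lem:totrecursive} (applied to $c = 2m+2$) into the sum $\tot(2m+1) + \tot(2m+2) = m\binom{2m}{m}$ from Lemma \ref{lem:binomialsum}. This yields
\[
3\tot(2m+1) \;=\; m\binom{2m}{m} + 2s(2m+1,2) + 6s(2m+1,4) + 2,
\]
and a parallel elimination gives
\[
3\tot(2m+2) \;=\; 2m\binom{2m}{m} - 2s(2m+1,2) - 6s(2m+1,4) - 2.
\]
In each case the terms $s(2m+1,2)$ and $s(2m+1,4)$ are the only contributions that separate $3\tot(2m+j)$ from $jm\binom{2m}{m}$.

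Next I would control these two terms via Theorem \ref{thm:binomials}. Setting $\sigma = 2k - 2m + 2$, that theorem yields $s(2m+1,2) \leq s(2m+1,2) + s(2m+2,2) = \binom{2m-1}{m}$ (taking $k=m$) and similarly $s(2m+1,4) \leq \binom{2m-1}{m+1}$ (taking $k=m+1$). Combined with the non-negativity of $s(c,\sigma)$, these two-sided bounds on $s(2m+1,2)$ and $s(2m+1,4)$ sandwich $3\tot(2m+j)$ between $jm\binom{2m}{m}$ and $jm\binom{2m}{m} \pm \bigl(2\binom{2m-1}{m}+6\binom{2m-1}{m+1}+2\bigr)$, as claimed.

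For the limit, I would first use the identity $\binom{2m-1}{m} = \tfrac{1}{2}\binom{2m}{m}$ together with the elementary inequality $\binom{2m-1}{m+1} \leq \binom{2m-1}{m}$ (since $m+1$ is farther from the center of row $2m-1$ than $m$) to obtain the crude bound
\[
2\binom{2m-1}{m}+6\binom{2m-1}{m+1}+2 \;\leq\; 4\binom{2m}{m} + 2.
\]
Stirling's formula (or the bounds in Lemma \ref{lem:Wallis} used later in the paper) gives $\binom{2m}{m} = O(4^m/\sqrt{m})$. Meanwhile, Theorem \ref{thm:ernstsumners} provides $2|\mathcal{K}(c)| \geq \tfrac{2^{c-2}}{3}(1-o(1))$, which for $c = 2m+j$ is of order $4^m$. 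The ratio therefore decays like $1/\sqrt{m}$, proving the second assertion of the lemma.

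The main obstacle is just the bookkeeping of signs in the elimination step: Lemma \ref{lem:totrecursive} has the form $\tot(c) = 2\tot(c-1) - 2s(c-1,2) - 6s(c-1,4) - 2$, so one has to be careful whether the error terms add to or subtract from $jm\binom{2m}{m}$ depending on parity. Once that is pinned down, every subsequent step is a direct application of earlier results and a routine Stirling estimate.
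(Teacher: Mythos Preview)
Your proposal is correct and mirrors the paper's proof: combine Lemmas \ref{lem:binomialsum} and \ref{lem:totrecursive} to express $3\tot(2m+j)$ as $jm\binom{2m}{m}$ plus an error term involving $s(2m+1,2)$ and $s(2m+1,4)$, bound those via Theorem \ref{thm:binomials}, and handle the limit with Lemma \ref{lem:Wallis} and Theorem \ref{thm:ernstsumners}. Your flagged sign concern is apt---for $j=2$ the error term is subtracted rather than added, so the two-sided inequality as literally stated requires exactly the bookkeeping care you mention, though this does not affect the downstream use of the lemma.
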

\begin{proof}
By Lemmas \ref{lem:binomialsum} and \ref{lem:totrecursive}, 
\begin{align*}
    m\binom{2m}{m} = & \; \tot_2(m)\\
    = & \; \tot(2m+1) + \tot(2m+2)\\
    = & \; 3\tot(2m+1) -2s(2m+1,2) - 6s(2m+1,4) - 2.
\end{align*}
Theorem \ref{thm:binomials} implies that $s(2m+1,2)\leq \binom{2m-1}{m}$ and $s(2m+1,4)\leq \binom{2m-1}{m+1}$. Thus
\begin{align*}
    m\binom{2m}{m} \leq & \;  3\tot(2m+1)\\
    = & \; m\binom{2m}{m} + 2s(2m+1,2)+6s(2m+1,4)+2\\
    \leq & \; m\binom{2m}{m} +2 \binom{2m-1}{m} + 6\binom{2m-1}{m+1}+2\\
    = & \; m\binom{2m}{m} + \varepsilon(2m+1)
\end{align*}
proving the inequality in the statement of the lemma when $j=1$.

Again by Lemmas \ref{lem:binomialsum} and \ref{lem:totrecursive}, 
\begin{align*}
    2m\binom{2m}{m} = & \; 2\tot_2(m)\\
    = & \; 2\tot(2m+1) + 2\tot(2m+2)\\
    = & \; 3\tot(2m+2) + 2s(2m+1,2) + 6s(2m+1,4) + 2.
\end{align*}
Thus
\begin{align*}
2m\binom{2m}{m} - \varepsilon(2m+2) = & \; 2m\binom{2m}{m} -2 \binom{2m-1}{m} - 6\binom{2m-1}{m+1}-2\\
 \leq  & \;  2m\binom{2m}{m} - 2s(2m+1,2)- 6s(2m+1,4)-2\\
= & \; 3\tot(2m+2)\\
\leq & \; 2m\binom{2m}{m},
\end{align*}
proving the inequality in the statement of the lemma when $j=2$.

To show that 
        \[\lim_{m\to\infty} \frac{1}{2|\mathcal{K}(c)|} \left(2\binom{2m-1}{m}+6\binom{2m-1}{m+1}+2 \right) = 0,\]
we first observe that if $c=2m+1$, then
\[   \frac{2\binom{2m-1}{m}}{2^{c-2}} = \frac{\binom{2m}{m}}{2^{2m-1}}.\]
Lemma \ref{lem:Wallis} below implies that
\[\frac{2}{\sqrt{\pi m}}\left(1-\frac{1}{4m}\right) < \frac{\binom{2m}{m}}{2^{2m-1}} < \frac{2}{\sqrt{\pi m}}. \]
The squeeze theorem implies that $\frac{\binom{2m}{m}}{2^{2m-1}}=\frac{2\binom{2m-1}{m}}{2^{c-2}}$ goes to zero as $c$ goes to infinity, and thus so does $\frac{2\binom{2m-1}{m}}{2|\mathcal{K}(c)|}$. The argument when $c=2m+2$ is similar. Since $\binom{2m-1}{m+1} < \binom{2m-1}{m}$, it follows that  $\frac{6\binom{2m-1}{m+1}}{2|\mathcal{K}(c)|}$ also goes to zero as $c$ goes to infinity, which implies the desired result.
\end{proof}

Finally, we use the following inequality based on the Wallis formula for $\pi$ \cite{Wallis} to estimate the central binomial coefficient appearing in Lemma \ref{lem:totinequality} and in the proof of Theorem \ref{thm:avgsignature}. 

\begin{lemma}[Hirschhorn \cite{Hirschhorn}]
    \label{lem:Wallis}
    For any positive integer $m$,
    \[\frac{4^m}{\sqrt{\pi m}}\left(1-\frac{1}{4m}\right) < \binom{2m}{m} < \frac{4^m}{\sqrt{\pi m}}.\]
\end{lemma}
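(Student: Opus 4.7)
The plan is to reformulate the two-sided bound in terms of a single normalized sequence and then handle each side via monotonicity combined with a limit computation. Set
\[ c_m \;=\; \sqrt{\pi m}\,\binom{2m}{m}\,4^{-m}, \]
so that the inequalities in the statement become $1 - \tfrac{1}{4m} < c_m < 1$ for all $m \geq 1$. Using the identity $\binom{2(m+1)}{m+1} = \binom{2m}{m}\cdot\tfrac{2(2m+1)}{m+1}$, a direct computation gives the ratio
\[ \frac{c_{m+1}^2}{c_m^2} \;=\; \frac{(2m+1)^2}{4m(m+1)} \;=\; 1 + \frac{1}{4m(m+1)}, \]
which is strictly greater than $1$. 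Hence $c_m^2$, and therefore $c_m$, is strictly increasing in $m$. Stirling's formula applied separately to $(2m)!$ and $m!$ (or equivalently a rearrangement of the partial products into Wallis's infinite product for $\pi/2$) gives $c_m \to 1$. Monotonicity plus this limit immediately yields the upper bound $c_m < 1$.

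For the lower bound the cleanest approach is to introduce the auxiliary sequence
\[ e_m \;=\; \frac{(1 - 1/(4m))^2}{c_m^2}. \]
Since both numerator and denominator tend to $1$, we have $e_m \to 1$. Showing $e_m$ is strictly increasing therefore forces $e_m < 1$ for all $m$, which rearranges to $c_m > 1 - \tfrac{1}{4m}$, the desired lower bound. Substituting the ratio computed above, the monotonicity $e_{m+1}/e_m > 1$ reduces, after clearing denominators, to the polynomial inequality
\[ 4m^3(4m+3)^2 \;>\; (m+1)(4m-1)^2(2m+1)^2. \]
Expanding each side as a polynomial in $m$ and subtracting leaves $16m^3 + 16m^2 + 3m - 1$, which is clearly positive for all $m \geq 1$.

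The main obstacle is really only this polynomial verification, which is elementary once one sets it up correctly; the structural difficulty lies in picking the right auxiliary sequence so that the lower bound is monotone in the \emph{right} direction (one must approach $1$ from below, not above, which dictates the choice of $e_m$ rather than $1/e_m$). The base case $m=1$ is easy to verify directly, since $c_1 = \sqrt{\pi}/2 \approx 0.886$ lies in $(3/4,1)$, so no initial-condition issues arise. Once monotonicity is in place, the squeeze between the increasing sequence $c_m$ and its limit, together with the symmetric squeeze for $e_m$, delivers both bounds simultaneously. This is essentially the shape of Hirschhorn's elementary argument, and it avoids any appeal to integral estimates or to the full Euler--Maclaurin error term in Stirling's formula.
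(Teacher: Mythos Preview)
Your argument is correct. The ratio computation $c_{m+1}^2/c_m^2 = (2m+1)^2/(4m(m+1))$ is right, the monotonicity of $c_m$ together with the Wallis/Stirling limit gives the upper bound, and your auxiliary sequence $e_m$ handles the lower bound; the polynomial difference $16m^3+16m^2+3m-1$ you obtain is exactly what falls out when one expands both sides, and it is positive for $m\geq 1$.

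As for comparison: the paper does not supply its own proof of this lemma. It is stated as a cited result from Hirschhorn and used as a black box in the proof of Theorem~1.1 and Lemma~4.6. Your write-up is in fact a faithful reconstruction of Hirschhorn's one-page argument (monotone sequence sandwiched against its Wallis limit, with a second monotone sequence for the lower bound), so there is no genuine methodological divergence to discuss---you have simply filled in what the paper outsources to the reference.
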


\section{4-genus}
\label{sec:4-genus}

The signature of a knot provides a lower bound for the 4-genus of the knot.  The rest of the paper focuses on an upper bound for the average 4-genus of a 2-bridge knot.

The proof technique is inspired by the proof of \cite[Lemma 2]{BKLMR} with a few notable changes.  Baader, Kjuchukova, Lewark, Misev, and Ray take $n$ to be half the number of crossings in a diagram of a 2-bridge knot and let $n$ tend to infinity.
Since the diagram need not be alternating, this diagrammatic crossing number need not be minimal. Because the variable $n$ satisfies $n+1\leq c\leq 2n$ where $c$ is the crossing number of the knot, their variable closely approximates the crossing number. In their model for 2-bridge knots, there is an even number of twist regions each with an even number of crossings, but both positive and negative twist regions are allowed. In their model, computing the genus is straightforward: the genus of such a knot is half the number of twist regions. However, working with the minimal crossing number is more difficult using their model.

Below we use a local modification of an oriented diagram called a \textit{saddle move}, that replaces two adjacent and oppositely-oriented arcs in a diagram as in Figure \ref{fig:saddle}. If $D_1$ and $D_2$ are diagrams of the knots $K_1$ and $K_2$, respectively, such that $D_1$ can be transformed into $D_2$ via $2k$ saddle moves and any number of Reidemeister moves (in any order), then there is a cobordism of genus $k$ between $K_1$ and $K_2$.
\begin{figure}[h!]
\begin{tikzpicture}[thick, decoration={
    markings,
    mark=at position 0.55 with {\arrow{>}}}]
    \draw[postaction={decorate}] (0,0) to [out = 30, in = 270] 
    (.3,.5) to [out = 90, in = 330]
    (0,1);
    \draw[postaction={decorate}] (1,1) to [out = 210, in = 90]
    (.7,.5) to [out = 270, in = 150]
    (1,0);

    \draw[<->] (2,.5) -- (3,.5);

    \draw[postaction={decorate}] (4,0) to [out = 60, in = 180] 
    (4.5,.3) to [out = 0, in = 120]
    (5,0);
     \draw[postaction={decorate}] (5,1) to [out = 240, in = 0]
    (4.5,.7) to [out = 180, in = 300]
    (4,1);
    
\end{tikzpicture}
\caption{An oriented saddle move.}
\label{fig:saddle}
\end{figure}
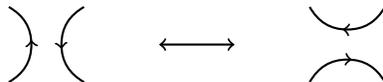

In Subsection \ref{subsec:newmodel}, we select a word $w$ at random from $T(2m+1)\cup T(2m+2)$, and let $D_w$ be its associated alternating diagram for the 2-bridge knot.  For $c=2m+j$ with $j=1$ or $2$, we let $c=st+r$ satisfying $0\leq r-1-j\leq s-1$.  Let $s<2m$ be a constant much smaller than $c$ so that we can partition the $c$ crossings into $t$ subsets of size $s$ with a remaining $r$ crossings.  In Proposition \ref{prop:newii} we perform oriented saddle moves as in Figure \ref{fig:cobordisms} in $t+1$ locations to ``cut'' the knot into $t+2$ connected summands via a cobordism of genus at most $t+1$.  The zeroth connected summand has only a single crossing that we remove by applying a Reidemeister I move. The last connected summand has $r-1$ crossings since we do not perform a saddle move after the ($2m+1$)st crossing.  The remaining $t$ connected summands each have $s$ crossings, with the saddle moves performed immediately following the ($ks+1$)st crossing for $0\leq k \leq \left\lfloor\frac{2m-1}{s}\right\rfloor=t$.  These oriented saddle moves are determined at each of the $t+1$ locations based on the orientations of the three arcs at this location.

By Proposition \ref{prop:bijection}, each of these $t$ summands is a random word of length $s$ in the alphabet $\{\sigma_1,\sigma_2^{-1}\}$.  There are $2^s$ such words possible.  These summands inherit orientations from the orientation of the original diagram $D_w$ associated with $w$ in $T(2m+1)\cup T(2m+2)$, which is prescribed.  There are $3\cdot 2^s$ such oriented summands possible.

 For each oriented connected summand, we identify in Definition \ref{def:mirror} its mirror image by rotating the diagram $180^\circ$, changing each crossing, and reversing the orientation.  Since every 2-bridge knot is equivalent to its reverse, we can reverse the orientation without changing the knot type. Let $2d=2d(s)$ be the number of these oriented connected summands whose diagram is not fixed by the operation rotating by $180^\circ$, changing each crossing, and reversing the orientation; the other $a=a(s)$ diagrams are amphichiral and do not change under this sequence of operations. 
Since $3\cdot 2^{s}= 2d+a$, where $a$ is at most $2^{\frac{s}{2}}$, we have that $d$ is bounded from above by $3\cdot 2^{s-1}$.  In Proposition \ref{prop:iconnectsummirror} we demonstrate why we collect these summands in pairs: if $K$ is a knot and $-\overline{K}$ is its reverse mirror image, then the $4$-genus of the connected sum $K\#(-\overline{K})$ is zero.



Some of the $2d$ summands are links;  in Lemma \ref{lem:link} we turn those appearing among the $t$ summands into knots using at most $3t$ saddle moves. If the last connected summand has two components, it can be transformed into a knot with a single saddle move. Thus there is a cobordism of genus at most $\frac{3}{2}t + \frac{1}{2}$ transforming the link into a knot. These saddle moves do not alter the $2^s$ random crossings but instead add additional crossings.

In Subsection \ref{subsec:random}, we then consider on the lattice $\mathbb{Z}^d\times\mathbb{Z}^a_2$ a random walk with $t$ steps coming from the $t$ summands.  
Theorem \ref{thm:taxicab} gives an upper bound on the expected value $3 \sqrt{2^s t}+a$ of the distance (along the lattice) of a random walk after $t$ steps.  Each of these steps in actuality has at most $s+3$ crossings, giving a contribution to the upper bound of the expected 4-genus of $\frac{3}{2}(s+3)\sqrt{2^st}+\frac{1}{2}(s+3)a$ coming from the 3-genus upper bound and the number of crossings in Inequality \ref{eq:crossing}.  We also have the last summand with $r-1$ crossings left over, contributing $\frac{1}{2}(r-1)$ to the upper bound of the expected 4-genus.

Finally in Section \ref{subsec:analysis} we analyze this upper bound on the expected 4-genus
\[(t+1) +\left(\frac{3}{2}t  + \frac{1}{2}\right)+\frac{3}{2} (s+3)\sqrt{2^st}+\frac{1}{2} (s+3)2^{\frac{s}{2}}+\frac{1}{2}(r-1),\]
which we then show is bounded from above by the term $\frac{4.94c}{\log c}$, to obtain Theorem \ref{thm:avg4genus}, which gives an explicit upper bound of $\frac{9.75 c}{\log c}$ on the average $4$-genus of $2$-bridge knots with crossing number $c$.

\begin{example}
\label{ex:cobordism}
    Let $w=+--+-+-+--++-++-\in T(12)$, with its associated knot diagram $D_w$ shown in Figure \ref{fig:CobEx}, and let $s=3$.  Performing the saddle moves prescribed in Proposition \ref{prop:newii} in the blue highlighted regions after crossings 1, 4, 7, and 10 decomposes the diagram into the connected sum of four 2-bridge knots and links. The first crossing can be removed via a Reidemeister I move. The second and fourth summands are links, and so we perform the saddle moves prescribed by the Link Lemma \ref{lem:link} to turn them into knots. These moves are highlighted in green. The first and third summands (highlighted in purple) are mirror images of one another, and so their connected sum is slice. The above process is depicted in Figure \ref{fig:CobEx}.

    Following our algorithm for this specific example, while making smart decisions, the 4-genus of the knot $K_w=12a_{1023}$ with diagram $D_w$ is bounded above by 6.  We obtain this by $\left(\frac{6}{2}\right)+\left(\frac{1}{2}+\frac{1}{2}\right)+\left\lfloor\frac{4+0+1}{2}\right\rfloor$, where the first contribution comes from Proposition \ref{prop:newii}, the second comes from Lemma \ref{lem:link} and also from the final link component, and the last comes from the upper bound on 3-genus which is an upper bound on 4-genus as in Equation \eqref{eq:crossing}.  The actual 4-genus of $K_w=12a_{1023}$ is one.  The upper bound as obtained in the equation above would be much higher.  
\end{example}

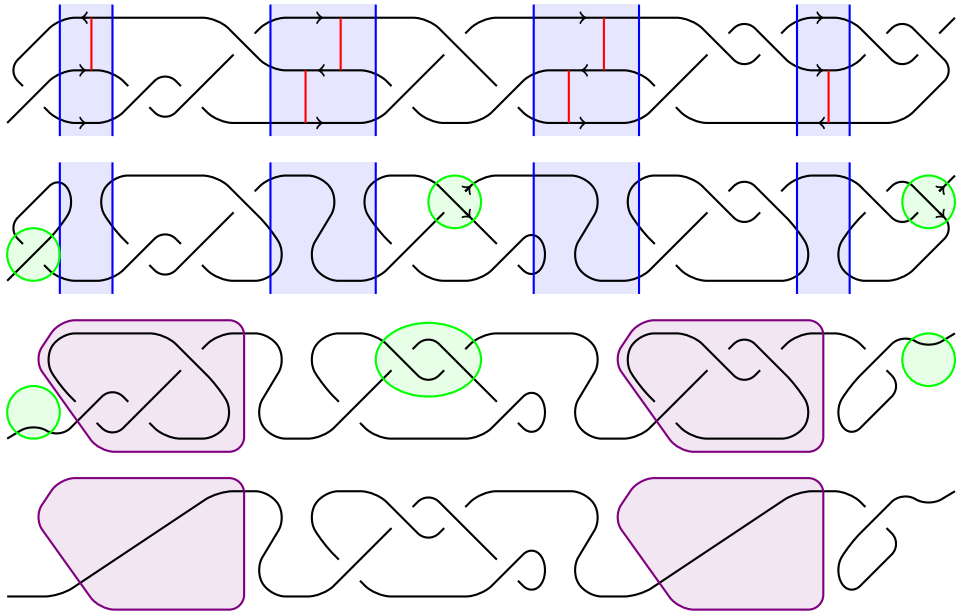
\begin{figure}[h!]
\[\begin{tikzpicture}[scale=.7,thick]

\fill[white!90!blue] (1,-.25) rectangle (2,2.25);
\fill[white!90!blue] (5,-.25) rectangle (7,2.25);
\fill[white!90!blue] (10,-.25) rectangle (12,2.25);
\fill[white!90!blue] (15,-.25) rectangle (16,2.25);

\begin{scope}[rounded corners = 2mm]
    \draw (0,0) -- (1,1) -- (2,1) -- (2.3,.7);
    \draw (2.7,.3) -- (3,0) -- (4.3,1.3);
    \draw (4.7,1.7) -- (5,2) -- (8,2) -- (9.3,.7);
    \draw (9.7,.3) -- (10,0) -- (12,0) -- (13.3,1.3);
    \draw (13.7,1.7) -- (14,2) -- (15,1) -- (16,1) -- (16.3,1.3);
    \draw (16.7,1.7) -- (17,2) -- (18,1) -- (17,0) -- (13,0) -- (12.7,.3);
    \draw (12.3,.7) -- (12,1) -- (10,1) -- (9,0) -- (8,0) -- (7.7,.3);
    \draw (7.3,.7) -- (7,1) -- (5,1) -- (4,2) -- (1,2) -- (0,1) -- (0.3,.7);
    \draw (.7,.3) -- (1,0) -- (2,0) -- (3,1) -- (3.3,.7);
    \draw (3.7,.3) -- (4,0) -- (7,0) -- (8.3,1.3);
    \draw (8.7,1.7) -- (9,2) -- (13,2) -- (14,1) -- (14.3,1.3);
    \draw (14.7,1.7) -- (15,2) -- (16,2) -- (17,1) -- (17.3,1.3);
    \draw (17.7,1.7) -- (18,2);
\end{scope}

\draw[blue] (1,2.25) -- (1,-.25);
\draw[blue] (2,2.25) -- (2,-.25);
\draw[blue] (5,2.25) -- (5,-.25);
\draw[blue] (7,2.25) -- (7,-.25);
\draw[blue] (10,2.25) -- (10,-.25);
\draw[blue] (12,2.25) -- (12,-.25);
\draw[blue] (15,2.25) -- (15,-.25);
\draw[blue] (16,2.25) -- (16,-.25);

\draw[<-] (1.4,2) -- (1.5,2);
\draw[->] (1.4,1) -- (1.5,1);
\draw[->] (1.4,0) -- (1.5,0);

\draw[->] (5.9,2) -- (6,2);
\draw[<-] (5.9,1) -- (6,1);
\draw[->] (5.9,0) -- (6,0);

\draw[->] (10.9,2) -- (11,2);
\draw[<-] (10.9,1) -- (11,1);
\draw[->] (10.9,0) -- (11,0);

\draw[->] (15.4,2) -- (15.5,2);
\draw[->] (15.4,1) -- (15.5,1);
\draw[<-] (15.4,0) -- (15.5,0);

\draw[red] (1.6,1) -- (1.6,2);
\draw[red] (5+2/3,0) -- (5+2/3,1);
\draw[red] (5+4/3,1) -- (5+4/3,2);
\draw[red] (10+2/3,0) -- (10+2/3,1);
\draw[red] (10+4/3,1) -- (10+4/3,2);
\draw[red] (15.6,0) -- (15.6,1);

\begin{scope}[yshift=-3cm]
\fill[white!90!blue] (1,-.25) rectangle (2,2.25);
\fill[white!90!blue] (5,-.25) rectangle (7,2.25);
\fill[white!90!blue] (10,-.25) rectangle (12,2.25);
\fill[white!90!blue] (15,-.25) rectangle (16,2.25);
    \fill[white!90!green] (8.5,1.5) circle (.5cm);
   \fill[white!90!green] (0.5,0.5) circle (.5cm);
   \fill[white!90!green](17.5,1.5) circle (.5cm);

\begin{scope}[rounded corners = 2mm]
    \draw (0,0) -- (1,1) -- (1.3,1.5) -- (1,2) -- (0,1) -- (.3,.7);
    \draw (.7,.3) -- (1,0) -- (2,0) -- (3,1) -- (3.3,.7);
    \draw (3.7,.3) -- (4,0) -- (5,0) -- (5.3,.5) -- (5,1) -- (4,2) -- (2,2) -- (1.7,1.5) -- (2,1) -- (2.3,.7);
    \draw (2.7,.3) -- (3,0) -- (4.3,1.3);
    \draw (4.7,1.7) -- (5,2) -- (6,2) -- (6.3,1.5) -- (6,1) -- (5.7,.5) -- (6,0) -- (7,0) -- (8.3,1.3);
    \draw (8.7,1.7) -- (9,2) -- (11,2) -- (11.3,1.5) -- (11,1) -- (10.7,.5) -- (11,0) -- (12,0) -- (13.3,1.3);
    \draw (13.7,1.7) -- (14,2) -- (15,1) -- (15.3,.5) -- (15,0) --(13,0) -- (12.7,.3);
    \draw (12.3,.7) -- (12,1) -- (11.7,1.5) -- (12,2) -- (13,2) -- (14,1) -- (14.3,1.3);
    \draw (14.7,1.7) -- (15,2) -- (16,2) -- (17,1) -- (17.3,1.3);
    \draw (17.7,1.7) -- (18,2);
    \draw (9.7,.3) -- (10,0) -- (10.3,.5) -- (10,1) -- (9,0) -- (8,0) -- (7.7,.3);
    \draw (7.3,.7) -- (7,1) -- (6.7,1.5) -- (7,2) -- (8,2) -- (9.3,.7);
    \draw (16.7,1.7) -- (17,2)-- (18,1) -- (17,0) -- (16,0) -- (15.7,.5) -- (16,1) -- (16.3,1.3);
\end{scope}

    \draw[blue] (1,2.25) -- (1,-.25);
    \draw[blue] (2,2.25) -- (2,-.25);
    \draw[blue] (5,2.25) -- (5,-.25);
    \draw[blue] (7,2.25) -- (7,-.25);
    \draw[blue] (10,2.25) -- (10,-.25);
    \draw[blue] (12,2.25) -- (12,-.25);
    \draw[blue] (15,2.25) -- (15,-.25);
    \draw[blue] (16,2.25) -- (16,-.25);

    \draw[->] (8.7,1.7) -- (8.8,1.8);
    \draw[->] (8.7,1.3) -- (8.8,1.2);

    \draw[->] (17.7,1.7) -- (17.8,1.8);
    \draw[->] (17.7, 1.3) -- (17.8,1.2);

    \draw[green] (8.5,1.5) circle (.5cm);
    \draw[green] (0.5,0.5) circle (.5cm);
    \draw[green] (17.5,1.5) circle (.5cm);

\end{scope}


\begin{scope}[yshift=-6cm]

    \fill[white!90!green] (.5,.5) circle (.5cm);
    \fill[white!90!green] (17.5,1.5) circle (.5cm);
   \fill[white!90!green] (8,1.5) ellipse (1cm and .7cm);
       \fill[rounded corners = 2mm, white!90!purple] (13,2.25) -- (12,2.25) -- (11.5,1.5) -- (12.75,-.25) -- (15.5,-.25) -- (15.5,2.25) -- (13,2.25);

       \fill[rounded corners = 2mm, white!90!purple] (2,2.25) -- (1,2.25) -- (.5,1.5) -- (1.75,-.25) -- (4.5,-.25) -- (4.5,2.25) -- (2,2.25);

    \begin{scope}[rounded corners = 2mm]
        \draw (0,0) -- (0.5,0.3) -- (1,0) -- (2,1) -- (2.3,.7);
        \draw (2.7,.3) --(3,0) -- (4,0) -- (4.3,.5) -- (4,1) -- (3,2) -- (1,2) -- (.7,1.5) -- (1,1) -- (1.3,.7);
        \draw (1.7,.3) -- (2,0) -- (3.3,1.3);
        \draw (3.7,1.7) -- (4,2) -- (5,2) -- (5.3,1.5) -- (5,1) -- (4.7,.5) -- (5,0) -- (6,0) -- (7.3,1.3);
        \draw (7.7,1.7) -- (8,2) -- (9.3,.7);
        \draw (9.7,.3) -- (10,0) -- (10.3,.5) -- (10,1) -- (9,0) -- (7,0) -- (6.7,.3);
        \draw (6.3,.7) -- (6,1) -- (5.7,1.5) -- (6,2) -- (7,2) -- (8,1) -- (8.3,1.3);
        \draw (8.7,1.7) -- (9,2) -- (11,2) -- (11.3,1.5) -- (11,1) -- (10.7,.5) -- (11,0) -- (12,0) -- (13.3,1.3);
        \draw (13.7,1.7) -- (14,2) -- (15,1) -- (15.3,.5) -- (15,0) -- (13,0) -- (12.7,.3);
        \draw (12.3,.7) -- (12,1) -- (11.7,1.5) -- (12,2) -- (13,2) -- (14,1) -- (14.3,1.3);
        \draw (14.7,1.7) -- (15,2) -- (16,2) -- (16.3,1.7);
        \draw (16.7,1.3) -- (17,1) -- (16,0) -- (15.7,.5) -- (16,1) -- (17,2) -- (17.5,1.7) -- (18,2);
    \end{scope}

    \draw[green] (.5,.5) circle (.5cm);
    \draw[green] (17.5,1.5) circle (.5cm);
    \draw[green] (8,1.5) ellipse (1cm and .7cm);

    \draw[rounded corners = 2mm, purple] (2,2.25) -- (1,2.25) -- (.5,1.5) -- (1.75,-.25) -- (4.5,-.25) -- (4.5,2.25) -- (2,2.25);
    \draw[rounded corners = 2mm, purple] (13,2.25) -- (12,2.25) -- (11.5,1.5) -- (12.75,-.25) -- (15.5,-.25) -- (15.5,2.25) -- (13,2.25);

\end{scope}


\begin{scope}[yshift = -9cm, rounded corners = 2mm]

       \fill[rounded corners = 2mm, white!90!purple] (13,2.25) -- (12,2.25) -- (11.5,1.5) -- (12.75,-.25) -- (15.5,-.25) -- (15.5,2.25) -- (13,2.25);

       \fill[rounded corners = 2mm, white!90!purple] (2,2.25) -- (1,2.25) -- (.5,1.5) -- (1.75,-.25) -- (4.5,-.25) -- (4.5,2.25) -- (2,2.25);

\draw (0,0) -- (1,0) -- (4,2) -- (5,2) -- (5.3,1.5) -- (5,1) -- (4.7,.5) -- (5,0) -- (6,0) -- (7.3,1.3);
 \draw (7.7,1.7) -- (8,2) -- (9.3,.7);
\draw (9.7,.3) -- (10,0) -- (10.3,.5) -- (10,1) -- (9,0) -- (7,0) -- (6.7,.3);
\draw (6.3,.7) -- (6,1) -- (5.7,1.5) -- (6,2) -- (7,2) -- (8,1) -- (8.3,1.3);
\draw (8.7,1.7) -- (9,2) -- (11,2) -- (11.3,1.5) -- (11,1) -- (10.7,.5) -- (11,0) -- (12,0) --  (15,2) -- (16,2) -- (16.3,1.7);
    \draw (16.7,1.3) -- (17,1) -- (16,0) -- (15.7,.5) -- (16,1) -- (17,2) -- (17.5,1.7) -- (18,2);

        \draw[rounded corners = 2mm, purple] (2,2.25) -- (1,2.25) -- (.5,1.5) -- (1.75,-.25) -- (4.5,-.25) -- (4.5,2.25) -- (2,2.25);
    \draw[rounded corners = 2mm, purple] (13,2.25) -- (12,2.25) -- (11.5,1.5) -- (12.75,-.25) -- (15.5,-.25) -- (15.5,2.25) -- (13,2.25);

\end{scope}

\end{tikzpicture}\]
\caption{The word $w=+--+-+-+--++-++- \in T(12)$ corresponding to the knot $12a_{1023}$ and the diagrams obtained from it via cobordisms as in our algorithm.}
\label{fig:CobEx}
\end{figure}

\subsection{Decomposing a knot into connected summands}
\label{subsec:newmodel}  
Instead of selecting a word $w$ at random from $T(2m+1)\cup T(2m+2)$, we establish a bijection that allows us to choose a word $w'$ at random from the set $ \{\sigma_1,\sigma_2^{-1}\}^{2m-1}$, removing the global condition of length modulo 3 and showing that indeed we have independent and identically distributed random variables for the individual crossings, which will become necessary in Subsection \ref{subsec:random}.

\begin{proposition}
\label{prop:bijection}    
There is a bijection between $T(2m+1)\cup T(2m+2)$ and $\{\sigma_1,\sigma_2^{-1}\}^{2m-1}$, the set of words of length $2m-1$ in $\{\sigma_1,\sigma_2^{-1}\}$.
\end{proposition}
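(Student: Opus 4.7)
The plan is to construct an explicit bijection via unique extension of braid prefixes. First, I check that both sets have the same cardinality: using $|T(c)| = (2^{c-2} - (-1)^c)/3$, we get $|T(2m+1)| + |T(2m+2)| = (2^{2m-1}+1)/3 + (2^{2m}-1)/3 = 2^{2m-1}$, which equals $|\{\sigma_1, \sigma_2^{-1}\}^{2m-1}|$. So it suffices to produce an injection in one direction.

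Next, I exploit the braid-word correspondence for $w \in T(c)$: the associated braid word $b_1 b_2 \cdots b_c$ has $b_i = \sigma_1$ exactly when ($i$ is odd with $\varepsilon_i = 1$) or ($i$ is even with $\varepsilon_i = 2$); otherwise $b_i = \sigma_2^{-1}$. Call position $i$ a \emph{match} if $\varepsilon_i = 1$ and a \emph{mismatch} if $\varepsilon_i = 2$. Then the condition $\sum_i \varepsilon_i \equiv 1 \pmod 3$ translates to $c + M \equiv 1 \pmod 3$, where $M$ is the total mismatch count in $b_1 \cdots b_c$. The boundary conditions $\varepsilon_1 = \varepsilon_c = 1$ force $b_1 = \sigma_1$, and $b_c = \sigma_1$ if $c$ is odd or $b_c = \sigma_2^{-1}$ if $c$ is even.

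Given $s = (s_1, \ldots, s_{2m-1}) \in \{\sigma_1, \sigma_2^{-1}\}^{2m-1}$, form the length-$2m$ braid prefix $P = \sigma_1 s_1 \cdots s_{2m-1}$ and let $M_s$ denote its mismatch count. There are three candidate extensions of $P$ to a valid braid word representing an element of $T(2m+1) \cup T(2m+2)$: first, append $\sigma_1$ to land in $T(2m+1)$, with total mismatch $M_s$, which requires $M_s \equiv m \pmod 3$; second, append $\sigma_1 \sigma_2^{-1}$ to land in $T(2m+2)$, with total mismatch $M_s$, which requires $M_s \equiv m+2 \pmod 3$; third, append $\sigma_2^{-1} \sigma_2^{-1}$ to land in $T(2m+2)$, with total mismatch $M_s + 1$ (since position $2m+1$ is odd, the appended $\sigma_2^{-1}$ there is a mismatch), which requires $M_s \equiv m+1 \pmod 3$. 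The three required residues $\{m, m+1, m+2\}$ exhaust $\mathbb{Z}/3\mathbb{Z}$, so exactly one extension is valid. This defines a map $\Phi \colon \{\sigma_1, \sigma_2^{-1}\}^{2m-1} \to T(2m+1) \cup T(2m+2)$; distinct $s$ produce distinct braid prefixes, hence distinct full braid words, hence distinct words $w$, so $\Phi$ is injective. Combined with the cardinality computation, $\Phi$ is a bijection.

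The crux of the argument is that the three possible extensions (one to $T(2m+1)$ and two to $T(2m+2)$) produce mismatch counts $M_s$, $M_s$, and $M_s+1$ under constraints covering the three residues modulo $3$; these conspire to select exactly one valid extension. The main technical obstacle is the parity bookkeeping: verifying how each appended braid letter contributes to $M$ based on whether its position is odd or even and whether it is $\sigma_1$ or $\sigma_2^{-1}$. An alternative proof uses the Jacobsthal recursion $|T(c)| = |T(c-1)| + 2|T(c-2)|$, which gives $|T(2m+1)| + |T(2m+2)| = 4(|T(2m-1)| + |T(2m)|)$, matching $2^{2m-1} = 4 \cdot 2^{2m-3}$, and supports an inductive construction; but the explicit extension above is more transparent and is naturally suited to the i.i.d.\ model used in Subsection \ref{subsec:random}.
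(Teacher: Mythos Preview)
Your proof is correct and is essentially the same as the paper's, just run in the opposite direction: the paper defines an injection $f\colon T(2m+1)\cup T(2m+2)\to\{\sigma_1,\sigma_2^{-1}\}^{2m-1}$ by stripping the prefix $+$ and the suffix $+$, $+-$, or $++-$ (corresponding in braid letters to $\sigma_1$, $\sigma_1\sigma_2^{-1}$, $\sigma_2^{-1}\sigma_2^{-1}$), while your $\Phi$ is precisely the inverse map, appending the unique suffix determined by the residue of $M_s$ modulo $3$. Your observation that the three residue conditions $m,m+1,m+2$ exhaust $\mathbb{Z}/3\mathbb{Z}$ is the dual of the paper's observation that the inner words $w_1,w_2,w_3$ have lengths $2,1,0$ modulo $3$.
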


\begin{proof}
Due to Fact \ref{fact:Jn}, with $|T(c)|=|J(c-2)|$ and $|\{\sigma_1,\sigma_2^{-1}\}^{2m-1}|=2^{2m-1}$, we need only show an injection.

Let $w\in T(2m+1)\cup T(2m+2)$. If $w\in T(2m+1)$, then $w=+w_1+$ for some word $w_1$. If $w\in T(2m+2)$, then $w=+w_2+-$ or $w=+w_3++-$ for some words $w_2$ or $w_3$. Define $f:T(2m+1)\cup T(2m+2)\to \{\sigma_1,\sigma_2^{-1}\}^{2m-1}$ by setting $f(w)$ to be the word obtained by first deleting the prefix and suffix from $w$ and then by replacing in $w_i$ runs $+$ and $--$ with $\sigma_1$ and runs $-$ and $++$ by $\sigma_2^{-1}$, as in Section \ref{sec:background}.

Since $w\in T(2m+1)\cup T(2m+2)$, its length is equivalent to one modulo three, and since $w$ is either $+w_1+$, $+w_2+-$, or $+w_3++-$, the lengths of $w_1$, $w_2$, and $w_3$ modulo three are 2, 1, and 0, respectively. Therefore, $f(+w_1+)$, $f(+w_2+-)$, and $f(+w_3++-)$ are all different words in $\{\sigma_1,\sigma_2^{-1}\}^{2m-1}$. Injectivity within each type of word is obvious.  Hence $f$ is injective, and therefore also bijective.
\end{proof}

In \cite[Lemma 2 (ii)]{BKLMR}, the authors perform $2(t-1)$ saddle moves at the start and end of two consecutive twist regions in order to transform a knot into $t$ connected summands as in Figure \ref{fig:BKLMR}.

\begin{figure}[!h]
\begin{center}
\begin{tikzpicture}

\begin{scope}[rounded corners = 3mm, thick]
    \draw (0,0) -- (1,1) -- (1.3,.7);
    \draw (1.7,.3) -- (2,0) -- (8,0) -- (9,1) -- (9.3,.7);
    \draw (9.7,.3) -- (10,0) -- (11,1) -- (11.3,.5) -- (11,0) -- (10.7,.3);
    \draw (10.3,.7) -- (10,1) -- (9,0) -- (8.7,.3);
    \draw (8.3,.7) -- (7,2) -- (6.7,1.7);
    \draw (6.3,1.3) -- (6,1) -- (5,2) -- (4.7,1.7);
    \draw (4.3,1.3) -- (4,1) -- (3,2) -- (2.7,1.5) -- (3,1) -- (3.3,1.3);
    \draw (3.7,1.7) -- (4,2) -- (5,1) -- (5.3,1.3);
    \draw (5.7,1.7) -- (6,2) -- (7,1) -- (7.3,1.3);
    \draw (7.7,1.7) -- (8,2) -- (12,2) -- (13,1) -- (13.3,1.3);
    \draw (12.3,1.3) -- (12,1) -- (11.7,.5) -- (12,0) -- (14,0);
    \draw (12.7,1.7) -- (13,2) -- (14,1);
    \draw (13.7,1.7) -- (14,2);
    \draw (0,1) -- (.3,.7);
    \draw (.7,.3) -- (1,0) -- (2,1) -- (2.3,1.5) -- (2,2) -- (0,2);

\end{scope}

\end{tikzpicture}
\end{center}
\caption{\label{fig:BKLMR} Two saddle moves at the start and end of consecutive twist regions, taken from \cite[Figure 3]{BKLMR}, with crossings matching our setting.}
\end{figure}
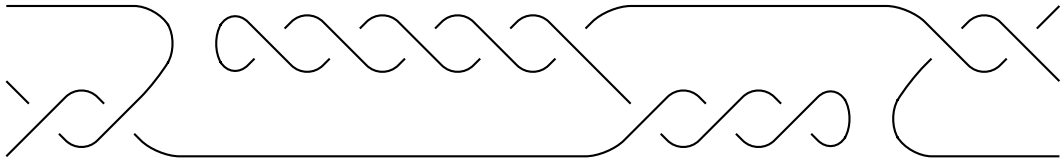

In our work instead, we will use cobordisms of genus at most $t+1$ to produce $t+1$ connected summands, with all but the last having $s$ crossings.  Each connected summand is determined by an oriented word, defined as follows.

\begin{definition}
\label{def:OW}
For every positive integer $s$, define $W(s)$ to be the set $\{\sigma_1,\sigma_2^{-1}\}^s$ of words of size $s$.  Define $OR=\{o_1, o_2,o_3\}$ to be the set of the three possible orientations, with $o_i$ having the $i$th strand oriented to the left.  Let $OW(s)=OR\times W(s)$ be the set of oriented words of size $s$. For brevity, we often denote the oriented word $(o_i,z)\in OW(s)$ by $\vec{\omega}$. Each oriented word $\vec{\omega}$ has an associated connected summand diagram (or long knot diagram) $D_{\vec{\omega}}$ obtained by capping off two adjacent and oppositely oriented strands on the left and right of the standard diagram of the oriented word $\vec{\omega}$, as determined in Figure \ref{fig:orwords}. The top row of Figure \ref{fig:orwords} shows how to cap off the strands to the left of $\vec{\omega}$, and the bottom row shows how to cap off the strands to the right of $\vec{\omega}$.
\end{definition}

\begin{figure}[h!]
\begin{tikzpicture}[thick]

\draw (2,-.5) -- (1,-.5) -- (1,2.5) -- (2,2.5);

\draw[->] (1,2) -- (.5,2);
\draw (.5,2) -- (0,2);

\draw[->] (0,1) -- (0.5,1);
\draw (0.5,1) -- (1,1);

\draw[->] (0,0) -- (0.5,0);
\draw (0.5,0) -- (1,0);

\draw[dashed] (0,-.5) -- (0,2.5);

    \begin{scope}[decoration={
    markings,
    mark=at position 0.5 with {\arrow{>}}}]]
        \draw[postaction={decorate}] (0,2) to [out=180, in = 180]  (0,1);
    \end{scope}
    
 \draw (-.3,0) -- (0,0);
 \node at (1.7,1) {$\vec{\omega}$};

 \begin{scope}[xshift = 4cm]
 \draw (2,-.5) -- (1,-.5) -- (1,2.5) -- (2,2.5);

\draw (1,2) -- (.5,2);
\draw[<-] (.5,2) -- (0,2);

\draw (0,1) -- (0.5,1);
\draw[<-] (0.5,1) -- (1,1);

\draw[->] (0,0) -- (0.5,0);
\draw (0.5,0) -- (1,0);

\draw[dashed] (0,-.5) -- (0,2.5);

    \begin{scope}[decoration={
    markings,
    mark=at position 0.5 with {\arrow{>}}}]]
        \draw[postaction={decorate}] (0,1) to [out=180, in = 180]  (0,2);
    \end{scope}
    
 \draw (-.3,0) -- (0,0);
 
 \node at (1.7,1) {$\vec{\omega}$};
 \end{scope}
 
 \begin{scope}[xshift = 8cm]
 \draw (2,-.5) -- (1,-.5) -- (1,2.5) -- (2,2.5);

\draw (1,2) -- (.5,2);
\draw[<-] (.5,2) -- (0,2);

\draw[->] (0,1) -- (0.5,1);
\draw (0.5,1) -- (1,1);

\draw (0,0) -- (0.5,0);
\draw[<-] (0.5,0) -- (1,0);

\draw[dashed] (0,-.5) -- (0,2.5);

    \begin{scope}[decoration={
    markings,
    mark=at position 0.5 with {\arrow{>}}}]]
        \draw[postaction={decorate}] (0,0) to [out=180, in = 180]  (0,1);
    \end{scope}
    
 \draw (-.3,2) -- (0,2);
 
 \node at (1.7,1) {$\vec{\omega}$};
 \end{scope}
 
 \begin{scope}[yshift =- 4cm, xshift = 1.7cm]

 \draw (-2,-.5) -- (-1,-.5) -- (-1, 2.5) -- (-2, 2.5);
 
 \draw[->] (0,2) -- (-.5,2);
 \draw (-.5,2) -- (-1,2);
 
 \draw[->] (-1,1) -- (-.5,1);
 \draw (-.5,1) -- (0,1);
 
 \draw[->] (-1,0) -- (-.5,0);
 \draw (-.5,0) -- (0,0);
 
     \begin{scope}[decoration={
    markings,
    mark=at position 0.5 with {\arrow{>}}}]]
        \draw[postaction={decorate}] (0,1) to [out=0, in = 0]  (0,2);
    \end{scope}
    
    \draw[dashed] (0,-.5) -- (0,2.5);

  \node at (-1.7,1) {$\vec{\omega}$};
  
   \draw (.3,0) -- (0,0);
 
 \end{scope}
  \begin{scope}[yshift =- 4cm, xshift = 5.7cm]

 \draw (-2,-.5) -- (-1,-.5) -- (-1, 2.5) -- (-2, 2.5);
 
 \draw(0,2) -- (-.5,2);
 \draw[<-] (-.5,2) -- (-1,2);
 
 \draw (-1,1) -- (-.5,1);
 \draw[<-] (-.5,1) -- (0,1);
 
 \draw[->] (-1,0) -- (-.5,0);
 \draw (-.5,0) -- (0,0);
 
     \begin{scope}[decoration={
    markings,
    mark=at position 0.5 with {\arrow{>}}}]]
        \draw[postaction={decorate}] (0,0) to [out=0, in = 0]  (0,1);
    \end{scope}
    
    \draw[dashed] (0,-.5) -- (0,2.5);

  \node at (-1.7,1) {$\vec{\omega}$};
  
   \draw (.3,2) -- (0,2);
 
 \end{scope}
  \begin{scope}[yshift =- 4cm, xshift = 9.7cm]

 \draw (-2,-.5) -- (-1,-.5) -- (-1, 2.5) -- (-2, 2.5);
 
 \draw(0,2) -- (-.5,2);
 \draw[<-] (-.5,2) -- (-1,2);
 
 \draw[->] (-1,1) -- (-.5,1);
 \draw(-.5,1) -- (0,1);
 
 \draw (-1,0) -- (-.5,0);
 \draw[<-] (-.5,0) -- (0,0);
 
     \begin{scope}[decoration={
    markings,
    mark=at position 0.5 with {\arrow{>}}}]]
        \draw[postaction={decorate}] (0,1) to [out=0, in = 0]  (0,0);
    \end{scope}
    
    \draw[dashed] (0,-.5) -- (0,2.5);

  \node at (-1.7,1) {$\vec{\omega}$};
  
   \draw (.3,2) -- (0,2);
    
 \end{scope}

\end{tikzpicture}
\caption{\label{fig:orwords} The connected summand diagram $D_{\vec{\omega}}$ associated with the oriented word $\vec{\omega}$.}
\end{figure}

\begin{proposition}
\label{prop:newii}
Let $w \in T(2m+1) \cup T(2m+2)$, let $D_w$ be its associated alternating diagram, and let $K_w$ be the knot represented by $D_w$. Then $K_w$ admits a cobordism of genus at most $t+1$, realized by at most $2t+2$ saddle moves on $D_w$, to a connected sum
\[
L_1 \# L_2 \# \cdots \# L_{t+1},
\]
where each summand $L_i$ is the unknot, a $2$-bridge knot, or a $2$-bridge link.
\end{proposition}

\begin{figure}[h!]
\begin{tikzpicture}[thick]
    \draw[->] (0,0) -- (3,0);
    \draw[->] (3,1) -- (0,1);
    \draw[->] (0,2) -- (3,2);
    \draw[red] (1,0) -- (1,1);
    \draw[red] (2,1) -- (2,2);

    \draw[->] (1.5,-.5) -- (1.5,-1.5);
    \begin{scope}[yshift = -4cm, decoration={
    markings,
    mark=at position 0.5 with {\arrow{>}}}]]
        \draw[postaction={decorate}] (0,0) to (.5,0)
        to [out = 0, in = 0] (.5,1) to (0,1);
        \draw[postaction={decorate}] (0,2) to (1.5,2) to [out = 0, in = 0] (1.5,1) to [out = 180, in = 180] (1.5,0) to (3,0);
        \draw[postaction={decorate}] (3,1) to (2.5,1) to [out=180, in = 180] (2.5,2) to (3,2);
    \end{scope}

    \begin{scope}[xshift = 4cm]
        \draw[->](0,2) -- (2,2);
        \draw[->](0,1) -- (2,1);
        \draw[->](2,0) -- (0,0);
        \draw[red] (1,1) -- (1,0);

        \draw[->] (1,-.5) -- (1,-1.5);
    \end{scope}
    \begin{scope}[xshift = 4cm, yshift = -4cm,decoration={
    markings,
    mark=at position 0.525 with {\arrow{>}}}]]
        \draw[postaction={decorate}] (0,1) to (.5,1) to [out = 0, in = 0] (.5,0) to (0,0);
        \draw[postaction={decorate}] (2,0) to (1.5,0) to [out = 180, in = 180] (1.5,1) to (2,1);
        \draw[postaction={decorate}] (0,2) to (2,2);
    \end{scope}

    \begin{scope}[xshift = 7cm]
        \draw[->](2,2) -- (0,2);
        \draw[->](0,1) -- (2,1);
        \draw[->](0,0) -- (2,0);
        \draw[red] (1,1) -- (1,2);

        \draw[->] (1,-.5) -- (1,-1.5);
    \end{scope}
    \begin{scope}[xshift = 7cm, yshift = -4cm,decoration={
    markings,
    mark=at position 0.51 with {\arrow{<}}}]
        \draw[postaction={decorate}] (0,2) to (0.5,2) to [out = 0, in = 0] (.5,1) to (0,1);
        \draw[postaction={decorate}] (2,1) to (1.5,1) to [out = 180, in = 180] (1.5,2) to (2,2);
        \draw[postaction={decorate}] (2,0) to (0,0);
    \end{scope}
    
\end{tikzpicture}
\caption{\label{fig:cobordisms}
Saddle moves determined by orientations of the strands after each $s$ crossings.  }
\end{figure}
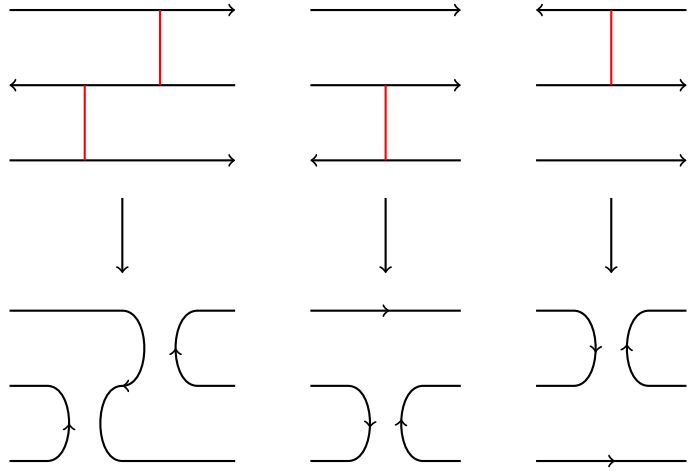

\begin{proof}
The word $w$ gives an oriented alternating diagram $D_w$ of $K_w$. For $0\leq k \leq \left\lfloor\frac{2m-1}{s}\right\rfloor=t$, define $\vec{\omega}_k\in OW(s)$ to be the oriented word consisting of the portion of $D_w$ after crossing $1+(k-1)s$ and before crossing $1+ks$. Define $\vec{\omega}_{t+1}$ to be the oriented braid word consisting of the portion of the diagram after crossing $1+ts$ until immediately after the final crossing.  

Performing the saddle moves depicted in Figure \ref{fig:cobordisms} following crossing $1+ks$ for $0\leq k \leq t$ transforms $D_w$ into the diagram $U\#D_{\vec{\omega}_1}\# D_{\vec{\omega}_2}\#\cdots\# D_{\vec{\omega}_t}\#D_{\vec{\omega}_{t+1}}$ where the $D_{\vec{\omega}_i}$ are defined in Definition \ref{def:OW} and $U$ is a single crossing that can be removed via a Reidemeister I move. Since each cobordism described in Figure \ref{fig:cobordisms} consists of at most two saddle moves, the total number of saddle moves is at most $2t+2$. Thus there is a cobordism of genus at most $t+1$ between $K_w$ and $L_1\# L_2\# \cdots \# L_t \# L_{t+1}$ where  $L_k$ is the knot or link with diagram $D_{\vec{\omega}_k}$ for $1\leq k \leq t+1$.

\end{proof}

\begin{remark}
\label{rem:cutting}
There is a cobordism of genus at most $t+1$ from the knot $K_w$ to the connected sum of links $L_1\#L_2\# \cdots \# L_t \# L_{t+1}$ contributing $t+1$ to the upper bound for expected 4-genus. We will keep track of these contributions to the upper bound for the expected 4-genus in remarks after results as we proceed.
\end{remark}

In \cite[Lemma 2 (i)]{BKLMR}, the authors pair up connected summands that are mirror images of each other; we do the same below.  The reason for this is the following well-known result on 4-genus.

\begin{proposition}
\label{prop:iconnectsummirror}
For all knots $K$, the connected sum of $K$ with its reverse mirror image $-\overline{K}$ is a ribbon knot; in particular, $g_4(K\#(-\overline{K}))=0$.
\end{proposition}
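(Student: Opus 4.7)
The plan is to construct an explicit ribbon disk for $K \# (-\overline{K})$, since this establishes the ribbon condition directly and the 4-genus statement then follows from the classical fact that ribbon knots are slice: any ribbon disk immersed in $S^3$ can be pushed into the interior of $B^4$, and each ribbon singularity smoothed out, to produce a properly embedded smooth disk in $B^4$ with boundary the original knot. Hence it suffices to exhibit a ribbon disk bounded by $K \# (-\overline{K})$ in $S^3$.

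For the construction, I would fix a diagram $D$ of $K$ and let $D^*$ be the diagram obtained by reflecting $D$ across a vertical axis and simultaneously switching every crossing (each over-strand becomes an under-strand); then $D^*$ represents $-\overline{K}$. Place $D$ and $D^*$ side by side in the plane and join them by a single band disjoint from all crossings to obtain a diagram $\widehat{D}$ of $K \# (-\overline{K})$. By construction, $\widehat{D}$ admits a reflection symmetry $\tau$ that swaps $D$ with $D^*$.

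Next, I would use $\tau$ to assemble the ribbon disk. For each arc of $\widehat{D}$ lying in the plane between successive crossings, consider the planar strip bounded by that arc together with its $\tau$-image; these strips, together with a region around the fusion band and small neighborhoods of each $\tau$-symmetric pair of crossings, fit together into an immersed disk $\Delta \subset S^3$ with $\partial \Delta = K \# (-\overline{K})$. The only self-intersections of $\Delta$ occur along transverse arcs where the sheet near a crossing of $D$ meets the mirror sheet near the corresponding crossing of $D^*$, and the reflection symmetry forces each such arc to have one preimage in the interior of $\Delta$ and one preimage with endpoints on $\partial \Delta$; in other words, each is a ribbon singularity, never a clasp. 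Thus $\Delta$ is a ribbon disk, so $K \# (-\overline{K})$ is ribbon, and in particular $g_4(K \# (-\overline{K})) = 0$.

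The main obstacle is presentational rather than conceptual: it takes some care to draw $\Delta$ cleanly enough to see explicitly that every double arc is of ribbon type and that none is a clasp. Because this is a classical result in knot concordance, going back at least to Fox, a brief sketch accompanied by a citation to a standard reference (such as Kawauchi's \emph{Survey of Knot Theory} or Livingston's concordance survey) should be adequate for the purposes of this paper.
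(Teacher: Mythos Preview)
The paper does not prove this proposition at all: it is stated as ``the following well-known result on 4-genus'' with no argument and no specific citation, and the text immediately moves on. Your closing instinct---that a brief sketch together with a citation to a standard reference (Fox--Milnor, Kawauchi, or Livingston) suffices---is exactly how the paper treats it, except that the paper omits even the sketch.

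Your sketch is essentially the classical construction and is correct in spirit, but the description of the immersed disk $\Delta$ as a union of ``planar strips'' between arcs of $\widehat{D}$ and their $\tau$-images is vague enough that a reader might not see why the pieces glue to a disk or why each double arc is ribbon rather than clasp. The cleanest diagrammatic version is to realize $K \# (-\overline{K})$ symmetrically across a plane $P$ and take $\Delta$ to be the union of all straight segments joining each point on the knot to its mirror image; equivalently, one can give the 4-dimensional argument by writing $K$ as the closure of a knotted arc $\alpha \subset B^3$ and observing that $\alpha \times I \subset B^3 \times I \cong B^4$ is a ribbon disk with boundary $K \# (-\overline{K})$. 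Either of these is easier to make precise than the strip-gluing picture, should you wish to include more than a citation.
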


Proposition \ref{prop:iconnectsummirror} motivates us to keep track of which connected summands occur together with their reverse mirror images. In order to do this combinatorially, we need to describe the mirror operation directly on the oriented words that encode the summands obtained in Proposition \ref{prop:newii}. The operation must account not only for changing each crossing, but also for the fact that the summand diagrams inherit orientations from the original diagram. The following definition gives this operation, allowing us later to group summands into mirror pairs whose contributions to the 4-genus can be discarded.

\begin{definition}
\label{def:mirror}
Let $\vec{\omega}\in OW(s)$ be an oriented word, and let $D_{\vec{\omega}}$ be its associated diagram obtained as in Figure \ref{fig:orwords}. Define $\mirror(D_{\vec{\omega}})$ to be the diagram obtained by rotating $D_{\vec{\omega}}$ by $180^\circ$, changing every crossing, and reversing the orientation. Define $\mirror(\vec{\omega})$ to be the oriented word in $OW(s)$ whose diagram is $D_{\mirror(\vec{\omega})}=\mirror(D_{\vec{\omega}})$. The links with diagrams $D_{\vec{\omega}}$ and $\mirror(D_{\vec{\omega}})$ are mirror images of one another.

If $\vec{\omega} = (o_i,z)$ where $z \in W(s)$ is an unoriented word, then we define $\mirror(z) \in W(s)$ to be the unoriented word such that $\mirror(\vec{\omega}) = (o_j,\mirror(z))$ for some $j=1$, $2$, or $3$. The word $\mirror(z)$ is obtained from $z$ by reversing the word and exchanging every $\sigma_1$ with a $\sigma_2^{-1}$ and vice versa.
\end{definition}

In Example \ref{ex:cobordism}, there are three connected summands corresponding to the oriented words $\vec{\omega}_1=(o_1,\sigma_1^2\sigma_2^{-1})$, $\vec{\omega}_2=(o_2,\sigma_1\sigma_2^{-1}\sigma_1)$, and $\vec{\omega}_3=(o_2,\sigma_1\sigma_2^{-2})$.  The first and third are mirror images of each other since the orientation to the right of $\vec{\omega}_1$ is $o_2$ and reversing $\sigma_1^2\sigma_2^{-1}$ and interchanging $\sigma_1$ and $\sigma_2^{-1}$  results in $\sigma_1\sigma_2^{-2}$, it follows that $\vec{\omega}_3=\mirror(\vec{\omega}_1)$.

It may be the case that some of our connected summands are 2-component links rather than knots. We transform each 2-component link summand into a knot summand via an operation $\merge$ described in Definition \ref{def:merge} below.  Figure \ref{fig:aux} shows two auxiliary moves we use to define $\merge$.  The first, which we call a twist, takes two coherently oriented strands and adds a crossing between them using a Reidemeister I move and a saddle move.  The second changes any oriented crossing using two saddle moves and two Reidemeister I moves.

\begin{figure}[h!]
\begin{tikzpicture}[thick]

\draw[->] (1.5,.5) -- (2,.5);
\draw[->] (5,.5) -- (5.5,.5);
\draw[->] (8,.5) -- (8.5,.5);

\begin{scope}[yshift = 2cm]

\draw (-2,.5) node{Twist};

\draw[->] (1.5,.5) -- (2,.5);
\draw[->] (5,.5) -- (5.5,.5);

\draw[->, rounded corners=3mm] (0,0) -- (0.3,0.5) -- (0,1);
\draw[->, rounded corners = 3mm] (1,0) -- (0.7,0.5) -- (1,1);

\begin{scope}[xshift = 2.5cm]
    \begin{knot}[
consider self intersections,
clip width = 4,
end tolerance = 2pt
]
\strand[->] (0,0) to [out = 45, in = 90,looseness=2] (1.25,.5) to [out = -90, in = -45, looseness = 2] (0,1);
\end{knot}
\draw [->] (1.75,0) to [out = 120, in =240] (1.75,1);
\draw[red] (1.25,.5) -- (1.6,.5);
\end{scope}

\begin{scope}[xshift = 6cm]
\begin{knot}[
consider self intersections,
clip width = 4,
end tolerance = 2pt
]
\strand[->] (0,0) -- (1,1);
\strand[->] (1,0) -- (0,1);
\end{knot}
\end{scope}

\end{scope}


\draw (-2,.5) node{Crossing change};

\begin{knot}[
consider self intersections,
clip width = 4,
end tolerance = 2pt
]
\strand[->] (0,0) -- (1,1);
\strand[->] (1,0) -- (0,1);
\end{knot}
\draw[red] (.9,.9) -- (.9,.1);

\begin{scope}[xshift = 2.5cm]
    \begin{knot}[
consider self intersections,
clip width = 4,
end tolerance = 2pt
]
\strand[->] (0,0) to [out = 45, in = 90,looseness=2] (1.25,.5) to [out = -90, in = -45, looseness = 2] (0,1);
\end{knot}
\draw [->] (1.75,0) to [out = 120, in =240] (1.75,1);
\end{scope}

\begin{scope}[xshift = 6cm]
    \begin{knot}[
consider self intersections,
clip width = 4,
end tolerance = 2pt
]
\flipcrossings{1};
\strand[->] (0,0) to [out = 45, in = 90,looseness=2] (1.25,.5) to [out = -90, in = -45, looseness = 2] (0,1);
\end{knot}
\draw [->] (1.75,0) to [out = 120, in =240] (1.75,1);
\draw[red] (1.25,.5) -- (1.6,.5);
\end{scope}

\begin{scope}[xshift = 9cm]
\begin{knot}[
consider self intersections,
clip width = 4,
end tolerance = 2pt
]
\strand[->] (1,0) -- (0,1);
\strand[->] (0,0) -- (1,1);
\end{knot}
\end{scope}

\end{tikzpicture}

\caption{Two auxiliary moves. The top row shows how to introduce a crossing between two coherently oriented strands using a Reidemeister I move and a saddle move. The second row shows how to change a crossing using two saddle moves and two Reidemeister I moves.}
\label{fig:aux}
\end{figure}

\begin{definition}
\label{def:merge}
Let $\vec{\omega}=(o_i,z)\in OW(s)$. Suppose its associated diagram $D_{\vec{\omega}}$ has two components. If $s$ is even, the orientations of the three strands between crossings $\frac{s}{2}$ and $\frac{s}{2}+1$ of $\vec{\omega}$ are determined by whether the top, middle, or bottom strand faces to the left.  If the top or bottom strand faces to the left, then two adjacent strands are coherently oriented to the right, and the operation $\merge$ performs a twist move as in the first row of Figure \ref{fig:linklemma}. Otherwise, the middle strand faces to the left, and $\merge$ performs a Reidemeister II move on two incoherently oriented strands, followed by a twist move, and then finally a crossing change as in the second and third rows of Figure \ref{fig:linklemma}. In this case, if $z$ is less than $\mirror(z)$ in lexicographical order, then  $\merge$ performs the moves in the second row, and if $\mirror(z)$ is less than $z$ in lexicographical order, then $\merge$ performs the moves in the third row. We remark that in this case it is impossible for $\mirror(z)=z$ for the following reason. If $\mirror(z)=z$, then $D_{\vec{\omega}}$ and $\mirror(D_{\vec{\omega}})$ are the same diagram, and thus their signature is zero. However, every 2-bridge link with two components has odd signature, a contradiction. 

When $s$ is odd, consider the orientations of the three strands at the central crossing.  If the crossing has two strands oriented to the right, $\merge$ performs the twist move to create a second copy of the crossing as in the fourth row of Figure \ref{fig:linklemma}.  Otherwise, on exactly one side of the central crossing, there are two coherently oriented strands:  they appear on the right side in the fifth row of Figure \ref{fig:linklemma} and they appear on the left side in the sixth row of the figure.  In both cases, $\merge$ performs the twist move to create the other crossing.

\end{definition}

%
%

\begin{lemma}
\label{lem:link} \emph{Link Lemma.  } Let $\vec{\omega}\in OW(s)$ be an oriented word with associated diagram $D_{\vec{\omega}}$. Suppose that $D_{\vec{\omega}}$ has two components. Then $\merge(D_{\vec{\omega}})$ has only one component. Also, the operations $\mirror$ and $\merge$ commute.

\end{lemma}


\begin{figure}[h!]
\begin{tikzpicture}[thick]

\draw[purple, ultra thick] (.5,0) -- (.5,.5);
\draw[->] (0,0) -- (1,0);
\draw[->] (0,.5) -- (1,.5);
\draw[->] (1,1) -- (0,1);

\draw [->] (2,.5) -- (3,.5);

\begin{scope}[xshift = 4cm]

\draw[rounded corners=2mm, ->] (0,0) -- (.25,0) -- (.75,.5) -- (1,.5);
\draw[rounded corners = 2mm] (0,.5) -- (.25,.5) -- (.4,.35);
\draw[rounded corners = 2mm, ->] (.6,.15) -- (.75,0) -- (1,0);
\draw[->] (1,1) -- (0,1);
\end{scope}
\begin{scope}[yshift = -2cm]
\draw[->] (0,0) -- (1,0);
\draw[<-] (0,.5) -- (1,.5);
\draw[->] (0,1) -- (1,1);

\draw [->] (2,.5) -- (3,.5);
\draw (2.4,.7) node{\small{R II}};

\begin{scope}[xshift = 4cm]
\fill[white!80!purple] (0.75,0.25) circle (.2cm);
\draw[->, rounded corners = 2mm] (0,0) -- (.5,.5) -- (1,0);
\draw (1,.5) -- (.85,.35);
\draw[rounded corners = 2mm] (.65,.15) -- (.5,0) -- (.35,.15);
\draw[->] (.15,.35) -- (0,.5);
\draw[->] (0,1) -- (1,1);
\draw[purple, ultra thick] (0.5,.4) -- (0.5,1);

\draw [->] (2,.5) -- (3,.5);

\end{scope}

\begin{scope}[xshift = 8cm, rounded corners = 2mm]

\draw (0,0) -- (.65,.65);
\draw[->] (.85,.85) -- (1,1) -- (1.5,1);
\draw[<-] (0,.5) -- (.15,.35);
\draw (.35,.15) -- (.5,0) -- (1,0) -- (1.5,.5);
\draw (0,1) -- (.5,1) -- (1.15,.35);
\draw[->] (1.35,.15) -- (1.5,0);
    
\end{scope}

\end{scope}
\begin{scope}[yshift = -4cm]
\draw[->] (0,0) -- (1,0);
\draw[<-] (0,.5) -- (1,.5);
\draw[->] (0,1) -- (1,1);

\draw [->] (2,.5) -- (3,.5);
\draw (2.4,.7) node{\small{R II}};

\begin{scope}[xshift = 4cm]
\fill[white!80!purple] (0.75,0.75) circle (.2cm);
\draw[->, rounded corners = 2mm] (0,1) -- (.5,.5) -- (1,1);
\draw (1,.5) -- (.85,.65);
\draw[rounded corners = 2mm] (.65,.85) -- (.5,1) -- (.35,.85);
\draw[->] (.15,.65) -- (0,.5);
\draw[->] (0,0) -- (1,0);
\draw[purple, ultra thick] (0.5,.6) -- (0.5,0);

\draw [->] (2,.5) -- (3,.5);

\end{scope}

\begin{scope}[xshift = 8cm, rounded corners = 2mm]

\draw (0,1) -- (.65,.35);
\draw[->] (.85,.15) -- (1,0) -- (1.5,0);
\draw[<-] (0,.5) -- (.15,.65);
\draw (.35,.85) -- (.5,1) -- (1,1) -- (1.5,.5);
\draw (0,0) -- (.5,0) -- (1.15,.65);
\draw[->] (1.35,.85) -- (1.5,1);
    
\end{scope}

\end{scope}

\begin{scope}[yshift=-6cm]

\draw[rounded corners = 2mm, ->] (0,0) -- (.25,0) -- (.75,.5) -- (1,.5);
\draw[rounded corners = 2mm] (0,.5) -- (.25,.5) -- (.35,.35);
\draw[rounded corners = 2mm, ->] (.65,.15) -- (.75,0) -- (1,0);
\draw[->] (1,1) -- (0,1);
\draw[purple, ultra thick] (.8,.47) -- (.8,.05);

\draw [->] (2,.5) -- (3,.5);

\begin{scope}[xshift = 4cm, rounded corners = 2mm]
    \draw (0,0) -- (.5,.5) -- (.65,.35);
    \draw[->] (.85,.15) -- (1,0);
    \draw (0,.5) -- (.15,.35);
    \draw[->] (.35,.15) -- (.5,0) -- (1,.5);
    \draw[->] (1,1) -- (0,1);
\end{scope}

\end{scope}

\begin{scope}[yshift=-8cm]

\draw[rounded corners = 2mm, ->] (0,0) -- (.25,0) -- (.75,.5) -- (1,.5);
\draw[rounded corners = 2mm, <-] (0,.5) -- (.25,.5) -- (.35,.35);
\draw[rounded corners = 2mm] (.65,.15) -- (.75,0) -- (1,0);
\draw[<-] (1,1) -- (0,1);
\draw[purple, ultra thick] (.8,.47) -- (.8,1);

\draw [->] (2,.5) -- (3,.5);

\begin{scope}[xshift = 4cm, rounded corners = 2mm]
    \draw (0,0) -- (.65,.65); 
    \draw[->] (.85,.85) -- (1,1);
    \draw[<-] (0,.5) -- (.15,.35);
    \draw (.35,.15) -- (.5,0) -- (1,0);
    \draw[->] (0,1) -- (.5,1) -- (1,.5);
\end{scope}

\end{scope}

\begin{scope}[yshift=-10cm]

\draw[rounded corners = 2mm, <-] (0,0) -- (.25,0) -- (.75,.5) -- (1,.5);
\draw[rounded corners = 2mm] (0,.5) -- (.25,.5) -- (.35,.35);
\draw[rounded corners = 2mm,->] (.65,.15) -- (.75,0) -- (1,0);
\draw[<-] (1,1) -- (0,1);
\draw[purple, ultra thick] (.2,.47) -- (.2,1);

\draw [->] (2,.5) -- (3,.5);

\begin{scope}[xshift = 4cm, rounded corners = 2mm]
    \draw (0,1) -- (.65,.35); 
    \draw[->] (.85,.15) -- (1,0);
    \draw (0,.5) -- (.15,.65);
    \draw[->] (.35,.85) -- (.5,1) -- (1,1);
    \draw[<-] (0,0) -- (.5,0) -- (1,.5);
\end{scope}

\end{scope}

\draw (-3,1.5) rectangle (10,-10.5);
\draw (-3,-4.5) -- (10,-4.5);
\draw (-.5,1.5) -- (-.5,-10.5);
\draw (-1.75 ,-1.5) node {Even};
\draw (-1.75, -7.5) node {Odd};

\end{tikzpicture}
\caption{\label{fig:linklemma}
 Saddle moves can be used to turn connected summands that are 2-component links into knots, as in the Link Lemma \ref{lem:link}. Twist moves are indicated with a purple segment and crossing changes by highlighting the crossing.}
\end{figure}

\begin{proof}

If two strands of $D_{\vec{\omega}}$ are coherently oriented, then they must belong to distinct components; inserting a crossing between them, which $\merge$ does in every case above, creates a knot.

A straightforward check shows that the operations $\mirror$ and $\merge$ commute, that is, performing the $\merge$ operation in Figure \ref{fig:linklemma} to the link diagram $D_{\vec{\omega}}$ followed by rotating by $180^\circ$, changing the crossings, and reversing the orientation results in the same diagram as rotating by $180^\circ$, mirroring the crossings, and reversing the orientation followed by performing the $\merge$ operation in Figure \ref{fig:linklemma}. Perhaps the most interesting case in this straightforward check is when $s$ is even and the middle strand of $\vec{\omega}$ between crossings $\frac{s}{2}$ and $\frac{s}{2}+1$ points to the left. In this case $\merge$ performs the operation in the second row of Figure \ref{fig:linklemma} to one of $D_{\vec{\omega}}$ or $\mirror(D_{\vec{\omega}})$ and the operation in the third row of Figure \ref{fig:linklemma} to the other, ensuring symmetry under the $\mirror$ operation.
\end{proof}

\begin{remark}
\label{rem:link}
If the ($t+1$)st summand $L_{t+1}$ is a 2-component link, then since the last crossing is oriented to the right, we can perform the inverse of the twist move to remove this crossing and turn the summand into a knot $K_{t+1}$. Thus there is a cobordism of genus at most $\frac{3}{2}t + \frac{1}{2}$ consisting of at most $3t+1$ saddle moves from the link $L_1\#\cdots \#L_{t+1}$ in Proposition \ref{prop:newii} with up to $2t+2$ components to the knot $K_1\#\cdots \#K_{t+1}$.



\end{remark}

Performing the Link Lemma does not change the number $3\cdot 2^s$ of possible connected summands.

If the oriented word $\vec{\omega}=(o_i,z)$ satisfies $\vec{\omega}=\mirror(\vec{\omega})$, then the summand $K_{\vec{\omega}}$ with diagram $D_{\vec{\omega}}$ is amphichiral.   In order for this to occur, the number of crossings $s$ must be even, the unoriented word $z$ has to be invariant when reversing its order and swapping each $\sigma_1$ with a $\sigma_2^{-1}$ (and vice versa), and the middle strand between crossings $\frac{s}{2}$ and $\frac{s}{2}+1$ must be pointed to the left. We say such oriented words are of \textit{amphichiral type}. An oriented word not of amphichiral type is said to be of \textit{chiral type}. The number $a=a(s)$ of oriented words of amphichiral type is 0 when $s$ is odd and $2^\frac{s}{2}$ when $s$ is even. The number $d=d(s)$ of oriented braid words of chiral type satisfies $3\cdot 2^s=2d+a$, so that $d\leq 3\cdot 2^{s-1}$.


\subsection{Random walk on a lattice}  \label{subsec:random} Each pair of words of chiral type corresponding to knots that are mirror images of each other defines a dimension in the integer sublattice $\mathbb{Z}^d$ of $\mathbb{Z}^d\times\mathbb{Z}_2^a$.  For each pair of mirror image knots, we arbitrarily choose one.  When this word appears as a connected summand, we will take a step in the positive direction in that dimension.  When its mirror appears as a connected summand, we will take a step in the negative direction in that dimension.  Since a word of amphichiral type is equivalent to its own mirror, each of these will define a dimension in $\mathbb{Z}^a_2$.

The goal of this subsection is to show the following result:

\begin{theorem}
\label{thm:taxicab}
For all positive integers $s$ and $t$, the expected taxicab distance from the origin after a random walk as in Definition \ref{def:random} of $t$ steps on the lattice $\mathbb{Z}^d\times\mathbb{Z}_2^a$ with $2d+a=3\cdot 2^s$ is at most $3\sqrt{2^s t} +a$.
\end{theorem}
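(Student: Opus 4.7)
The plan is to decompose the taxicab distance into its contribution from the $\mathbb{Z}^d$ factor and its contribution from the $\mathbb{Z}_2^p$ factor, then treat each piece separately. Writing the position at time $t$ as $X_t = (Y_t, Z_t)$ with $Y_t \in \mathbb{Z}^d$ and $Z_t \in \mathbb{Z}_2^p$, the taxicab distance is $\|Y_t\|_1 + \|Z_t\|_H$, where the second term is the Hamming weight. The $\mathbb{Z}_2^p$ piece is trivially bounded: each of the $p$ coordinates of $Z_t$ is $0$ or $1$, so $E[\|Z_t\|_H] \leq p$, which is exactly the additive $+p$ in the target bound. So the real work is showing $E[\|Y_t\|_1] \leq 3\sqrt{2^s t}$.

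For the $\mathbb{Z}^d$ piece, I would write $Y_t = \sum_{k=1}^{t} V_k$, where the $V_k$ are i.i.d.\ (by Proposition~\ref{prop:bijection}, the connect summands are independent and uniform). Each $V_k$ takes the value $+e_i$ with probability $\tfrac{1}{3 \cdot 2^s}$, the value $-e_i$ with probability $\tfrac{1}{3 \cdot 2^s}$ for $i = 1, \dots, d$ (since the two mirror-paired non-palindromic summands contribute $+1$ and $-1$ in that coordinate), and equals $0 \in \mathbb{Z}^d$ whenever a palindromic summand is drawn. Consequently, each coordinate $V_k^{(i)}$ is a mean-zero random variable equal to $\pm 1$ with total probability $\tfrac{2}{3 \cdot 2^s}$, so $\mathrm{Var}(V_k^{(i)}) = \tfrac{2}{3 \cdot 2^s}$. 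Independence across $k$ gives $\mathrm{Var}(Y_t^{(i)}) = \tfrac{2t}{3 \cdot 2^s}$.

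Now I apply Cauchy--Schwarz (or Jensen's inequality for the concave function $x \mapsto \sqrt{x}$) coordinate by coordinate:
\[
E\bigl[\,|Y_t^{(i)}|\,\bigr] \;\leq\; \sqrt{E\bigl[(Y_t^{(i)})^2\bigr]} \;=\; \sqrt{\tfrac{2t}{3 \cdot 2^s}}.
\]
Summing over the $d$ coordinates and using the bound $d \leq 3 \cdot 2^{s-1}$ recorded just before the theorem statement,
\[
E[\|Y_t\|_1] \;\leq\; d \sqrt{\tfrac{2t}{3 \cdot 2^s}} \;\leq\; 3 \cdot 2^{s-1} \cdot \sqrt{\tfrac{2t}{3 \cdot 2^s}} \;=\; \sqrt{\tfrac{3}{2} \cdot 2^s \cdot t} \;\leq\; 3\sqrt{2^s t}.
\]
Adding the trivial $E[\|Z_t\|_H] \leq p$ bound yields the theorem.

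There is not really a main obstacle: once independence of the steps is in hand (inherited from Proposition~\ref{prop:bijection}), it is a standard second-moment computation. The only subtlety worth flagging in the write-up is making sure the reader sees that mirror-paired summands produce $\pm 1$ in a single $\mathbb{Z}^d$-coordinate (justifying the mean-zero structure) while palindromic summands produce a $0$ in $\mathbb{Z}^d$ and a $\pm 1$ toggle in one $\mathbb{Z}_2$-coordinate, and that the factor $3$ in the final bound is crude; the proof actually gives the sharper constant $\sqrt{3/2}$, which I would mention as a remark.
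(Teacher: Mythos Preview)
There is a genuine gap: the steps of the walk are not i.i.d., and this is precisely the point of the paper's setup. Proposition~\ref{prop:bijection} shows that the \emph{unoriented} letters in $\{\sigma_1,\sigma_2^{-1}\}$ are i.i.d., but the lattice directions are indexed by \emph{oriented} words $(X_\ell,Z_\ell)\in OR\times W(s)$. By Definition~\ref{def:random}, the orientation $X_{\ell+1}$ is deterministically computed from $(X_\ell,Z_\ell)$, so the sequence $\{X_\ell\}$ is a Markov chain with the transition matrix of Equation~\eqref{eq:Pmatrix}, started at the fixed state $X_1=o_1$. Consequently your increments $V_k$ are neither independent nor identically distributed: the probability that $(X_\ell,Z_\ell)=\vec w=(o_*,z_*)$ is $\Pr(X_\ell=o_*)\cdot 2^{-s}$, which is not $1/(3\cdot 2^s)$ in general, and for a mirror pair $\vec w,\mirror(\vec w)$ with $o_*\neq\overline{o}_*$ these two probabilities differ, so $E[V_k^{(i)}]\neq 0$. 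More seriously, knowing $V_\ell$ tells you $X_{\ell+1}$, which biases $V_{\ell+1}$; the cross terms $E(I_{\vec w}(\ell)I_{\vec w}(\ell+k))$ do not vanish.

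The paper's proof (Proposition~\ref{prop.Jbound}) confronts exactly this. It expands $E(D_{\vec w}(t)^2)$ including the cross terms and controls them via the exponential mixing bound~\eqref{eq.pkdiffbound} on $\mathbf P^k$, obtaining $|E(I_{\vec w}(\ell)I_{\vec w}(\ell+k))|\leq 2(2^{-s})^{k+1}$; summing the resulting geometric series costs an extra factor of~$2$ in the variance, giving $E|D_{\vec w}(t)|\leq 2\sqrt{t/2^s}$ per coordinate rather than your $\sqrt{2t/(3\cdot 2^s)}$. This is why the final constant is~$3$ and not~$\sqrt{3/2}$: the slack absorbs the Markov correlations, not carelessness. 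Your decomposition into $\mathbb Z^d$ and $\mathbb Z_2^p$ pieces and the trivial bound on the palindromic part are fine and match the paper; what is missing is the Markov-chain correlation analysis.
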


\begin{remark}
\label{rem:randomwalkdistance}
After applying the Link Lemma \ref{lem:link}, each connected summand has at most $s+3$ crossings, yielding an upper bound on 4-genus by $\frac{s+3}{2}$ by Inequality \ref{eq:crossing}.  The result above implies that the expected taxicab distance of $3\sqrt{2^s t} +a$ is the number of connected summands that are not canceled by their mirror.  Then this contributes $\frac{1}{2}(s+3)(3\sqrt{2^s t} +a)$ to the upper bound for the expected 4-genus of the knot $K_w$.

Additionally, there are at most $r-1$ crossings in the last connected summand, contributing an additional $\frac{1}{2}(r-1)$ to the upper bound for the expected 4-genus of the knot $K_w$ by Inequality \ref{eq:crossing}.
\end{remark}

 To achieve this theorem, we first establish the correct probabilistic setting.  The authors of \cite{BKLMR} were able to use a simpler result on randomness because in their model for 2-bridge knots, the orientations before each pair of twist regions are fixed throughout.  We are not so lucky; instead our orientations are properly modeled by a Markov chain.

\begin{definition}
\label{def:random}
The set $OW(s)$ of oriented words of size $s$ can be partitioned into the set $OW^{Amp}(s)$ of all the words of amphichiral type and the set $OW^{Chi}(s)$ of all words that are of chiral type.

Each oriented word $\vec{\omega}=(o,z)\in OW(s)$ corresponds to an oriented braid where $o$ is the orientation that occurs to the left of braid word $z$ and $o'$ is the orientation to the right of $z$.  Let $\mirror(\vec{\omega})$ be the mirror of $\vec{\omega}$ as given in Definition \ref{def:mirror}.

Now we introduce our random variables.  Let $Z_1,Z_2,\ldots$ be a sequence of independent random variables, each chosen uniformly from $W(s)$.  
Let $X_1$ denote the orientation immediately to the right of the first crossing; this orientation is always $o_1$. 
 Let $X_{\ell+1}$ denote the orientation immediately after $Z_\ell$. Since $X_{\ell+1}$ is determined by the $\ell$th oriented word $(X_\ell,Z_\ell)$, it is a random quantity.
\end{definition}

Next we establish the transition matrix $\mathbf{P}$ for the Markov chain and find a formula for $\mathbf{P}^k$.

\begin{proposition}
Let $s$ be a positive integer.  By the construction given in Definition \ref{def:random}, the sequence $\{X_i\}$ is a Markov chain on the set $OR$.  
Using the ordering  $o_1,o_2,o_3$ of the set of states,  its transition probability matrix $\mathbf{P}$ is
\begin{equation}
   \label{eq:Pmatrix}
       \mathbf{P}   \; = \;  \frac{1}{2^s}    \left(      \begin{array}{ccc}  1 & 1 & 0 \\  1 & 0 & 1 \\ 0 & 1 & 1 \end{array} \right)^s .
\end{equation}
Moreover, for any nonnegative integer $k$, the $k$th power
$\mathbf{P}^k$ equals
\begin{eqnarray}
         \label{eq:Pmatrix2odd}
       & &    \frac{1}{3}  \left(      \begin{array}{ccc}  1+2^{-ks} & 1+2^{-ks} & 1-2^{1-ks}  \\ 
       1+2^{-ks} & 1-2^{1-ks} & 1+2^{-ks} \\
              1-2^{1-ks} & 1+2^{-ks}    &      1+2^{-ks}  \end{array} \right)   
              \hspace{5mm} \hbox{if $ks$ is odd, and }
        \\
        \label{eq:Pmatrix2even}
        & &  \frac{1}{3}  \left(      \begin{array}{ccc}  
        1+2^{1-ks} & 1-2^{-ks} & 1-2^{-ks}  \\  1-2^{-ks} & 1+2^{1-ks} & 1-2^{-ks} \\
              1-2^{-ks} & 1-2^{-ks}    &      1+2^{1-ks}  \end{array} \right)   
              \hspace{5mm} \hbox{if $ks$ is even.}
\end{eqnarray}
\end{proposition}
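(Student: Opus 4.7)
The plan has two main computations: first to derive the one-crossing transition matrix by elementary case analysis, and then to diagonalize $M$ to obtain the power formulas.  The Markov property is immediate, since $X_{\ell+1}$ is determined by $X_\ell$ together with $Z_\ell$, and the $Z_\ell$'s are independent of each other and of $X_1$.  For the transition matrix, I will work at the level of a single crossing.  The generator $\sigma_1$ transposes the strands at positions $2$ and $3$, and $\sigma_2^{-1}$ transposes those at positions $1$ and $2$.  Writing each orientation $o_i$ as ``leftward arrow in position $i$'' and applying each of the two possible crossings to each of the three orientations produces the one-crossing transition matrix
\[
Q \;=\; \tfrac{1}{2}\begin{pmatrix} 1 & 1 & 0 \\ 1 & 0 & 1 \\ 0 & 1 & 1 \end{pmatrix}.
\]
Since $Z_\ell$ is a uniformly chosen word in $W(s)=\{\sigma_1,\sigma_2^{-1}\}^s$, equivalently $s$ independent uniform crossings, the $s$-step transition matrix is $\mathbf{P}=Q^s$, which is exactly Equation \eqref{eq:Pmatrix}.

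For the power formula, I will diagonalize the symmetric integer matrix $M$ appearing in Equation \eqref{eq:Pmatrix}.  Its characteristic polynomial factors as $(\lambda-2)(\lambda-1)(\lambda+1)$, with mutually orthogonal eigenvectors $(1,1,1)^T$, $(1,0,-1)^T$, and $(1,-2,1)^T$ of squared norms $3$, $2$, and $6$.  The associated spectral projections are
\[
E_2 = \tfrac{1}{3}\begin{pmatrix} 1 & 1 & 1 \\ 1 & 1 & 1 \\ 1 & 1 & 1 \end{pmatrix},\qquad
E_1 = \tfrac{1}{2}\begin{pmatrix} 1 & 0 & -1 \\ 0 & 0 & 0 \\ -1 & 0 & 1 \end{pmatrix},\qquad
E_{-1} = \tfrac{1}{6}\begin{pmatrix} 1 & -2 & 1 \\ -2 & 4 & -2 \\ 1 & -2 & 1 \end{pmatrix},
\]
so $M^n = 2^n E_2 + E_1 + (-1)^n E_{-1}$, and therefore
\[
\mathbf{P}^k \;=\; \frac{M^{ks}}{2^{ks}} \;=\; E_2 \;+\; 2^{-ks}E_1 \;+\; (-1)^{ks}\,2^{-ks}E_{-1}.
\]
Separating the two cases $(-1)^{ks}=\pm 1$ and simplifying each matrix entry, for instance verifying in the $ks$-even case that the $(1,1)$-entry is $\tfrac{1}{3}+\tfrac{2^{-ks}}{2}+\tfrac{2^{-ks}}{6} = \tfrac{1+2^{1-ks}}{3}$, recovers Equations \eqref{eq:Pmatrix2odd} and \eqref{eq:Pmatrix2even}.

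There is no substantive obstacle: the only delicate step is correctly enumerating how the two possible crossings act on the three orientation states, and after that the argument is routine diagonalization and bookkeeping.  A pleasant by-product of the spectral decomposition is that $\mathbf{P}^k \to \tfrac{1}{3}J_3$ geometrically at rate $2^{-s}$ per step, which will feed into the random-walk analysis in Subsection \ref{subsec:analysis}.
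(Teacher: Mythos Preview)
Your proof is correct.  The derivation of the one-step transition matrix $Q$ is essentially the same as the paper's (the paper phrases it via the homomorphism $\sigma_i^{\pm1}\mapsto (i\ i+1)$ to the symmetric group, but the content is identical; note that your assignment of which pair of strands $\sigma_1$ swaps differs from the paper's convention, but by the $1\leftrightarrow 3$ symmetry of $M$ this has no effect on the resulting matrix).

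Your computation of $\mathbf{P}^k$ takes a genuinely different route.  The paper writes $M=B-J$ with $B$ the all-ones matrix and $J$ the anti-diagonal permutation matrix, observes the relations $J^2=I$, $B^2=3B$, $BJ=JB=B$, and expands $(B-J)^r$ by the binomial theorem, collapsing the sum to a closed form.  You instead diagonalize $M$ directly, finding eigenvalues $2,1,-1$ and writing $M^n=2^nE_2+E_1+(-1)^nE_{-1}$ via the spectral projections.  Both arguments are short and elementary; the paper's trick exploits the specific decomposition $M=B-J$ and avoids computing eigenvectors, while your approach is the canonical method for powers of a symmetric matrix and makes the geometric convergence rate $2^{-s}$ (your closing remark) completely transparent.
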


The entries of the matrix $\mathbf{P}$ are the transition probabilities,
\[   \mathbf{P}_{ij} \;\equiv\;  \mathbf{P}(o_i,o_j)  \;=\; \Pr( X_{\ell+1}=o_j \,|\, X_\ell=o_i\, ) \,,
\]
and a well-known property of Markov chains tells us that the powers of $\mathbf{P}$ give the multi-step transition probabilities:
\[    (\mathbf{P}^k)(o_i,o_j)   \;=\; \Pr( X_{t+k}=o_j \,|\, X_t=o_i\,) \,.
\]
As $s$ gets larger, Equations \eqref{eq:Pmatrix2odd} and \eqref{eq:Pmatrix2even} show that the probabilities of each of the orientations approach $\frac{1}{3}$.

\begin{proof}
We first prove Equation \eqref{eq:Pmatrix}.  For $s=1$, the set $W(1)=\{\sigma_1,\sigma_2^{-1}\}$.  Consider the map from the braid group to the symmetric group sending each generator $\sigma_i^{\pm1}$ to the transposition $(i \; i+1)$.  Then the first element of $W(1)$ exchanges the lower two strands and the second exchanges the upper two strands.  Divide by the number of elements of $W(1)$.  To get the matrix for general $\mathbf{P}$, simply multiply this matrix by itself $s$ times.

To get Equations 
\eqref{eq:Pmatrix2odd} and \eqref{eq:Pmatrix2even}, 
let $B=  \left(      \begin{array}{ccc}  1 & 1 & 1 \\  1 & 1 & 1 \\ 1 & 1 & 1 \end{array} \right)$ 
and $J= \left(      \begin{array}{ccc}  0 & 0 & 1 \\  0 & 1 & 0 \\ 1 & 0 & 0 \end{array} \right)$.  
Then $M=B-J$.  
Let $r=ks$.  Observe that $J^2=I$, $B^2=3B$, and $JB=B=BJ$.  Then using the Binomial Theorem twice, we 
find that
\begin{eqnarray*}
    M^r  \; = \; (B-J)^r  &=& (-J)^r \,+\,\sum_{k=1}^r \binom{r}{k} B^k (-1)^{r-k}  \\
   & = & (-J)^r  \,+\,  \sum_{k=1}^r \binom{r}{k} (-1)^{r-k} 3^{k-1}B  \\
   & = & (-J)^{r} \,+\,  (-1)^{r+1}\left(\frac{  \sum_{k=0}^r\binom{r}{k}(-3)^k  \,-\,1 }{-3}\right) B 
   \\
   & = & (-J)^r  \,+\, (-1)^{r+1}\left( \frac{(1-3)^r-1}{-3} \right)\,B
   \\
   & = & (-J)^r  \,+\, \frac{1}{3}\left(2^r-(-1)^r\right)\,B\,.
\end{eqnarray*}
Thus we have that $M^r$ equals $-J+ \frac{1}{3}(2^r+1)B$  if $r$ is odd, and $I+\frac{1}{3}(2^r-1)B$ if $r$ is even.  Since $P^k=2^{-r}M^r$, Equations \eqref{eq:Pmatrix2odd} and 
\eqref{eq:Pmatrix2even} follow.
\end{proof}
It follows from Equations (\ref{eq:Pmatrix2odd}) and (\ref{eq:Pmatrix2even}) that
\begin{equation}
   \label{eq.pkdiffbound}
   \left| (\mathbf{P}^{k})(o_i,o_j)  \,-\,  (\mathbf{P}^{k})(o_i,o_\ell) \right|   \;\leq \;(2^{-s})^k
   \hspace{5mm}\hbox{for all }o_i,o_j,o_\ell \in OR.
\end{equation}

Now we introduce our word-counting process.

\begin{definition}
\label{def:indicator}
Partition the set $OW^{Chi}(s)$ of words  into 
two sets $OW^{+}(s)$ and $OW^{-}(s)$ by arbitrarily assigning one of $\vec{\omega}$ or $\mirror(\vec{\omega})$ to be in $OW^{+}(s)$ and the other to be in $OW^{-}(s)$.

For each word of chiral type $\vec{\omega}\in OW^{+}(s)$ and $\ell\in \mathbb{N}$, define the plus/minus indicator of
the word $\vec{\omega}$ at step $\ell$ to be 
\[     I_{\vec{\omega}}(\ell)  \; = \; \begin{cases}   +1 & \hbox{if }(X_{\ell},Z_\ell) \,=\, \vec{\omega}  \\   -1 & \hbox{if }(X_{\ell},Z_\ell) \,=\, \mirror(\vec{\omega})  
      \\   0 & \hbox{otherwise}    \end{cases}
\]  
and the displacement from the origin in the $\vec{\omega}$ dimension after $t$ steps to be
\[  D_{\vec{\omega}}(t) \;=\;  \sum_{\ell=1}^t I_{\vec{\omega}}(\ell) \,.
\]

Analogously, for a word of amphichiral type $\vec{\omega}\in OW^{Amp}(s)$ and $\ell\in \mathbb{N}$, define 
\[     I_{\vec{\omega}}(\ell)  \; = \; \begin{cases}   +1 & \hbox{if }(X_{\ell},Z_\ell) \,=\, \vec{\omega} 
      \\   0 & \hbox{otherwise}    \end{cases}
\]  
and 
\[  D_{\vec{\omega}}(t) \;=\;  \sum_{\ell=1}^t I_{\vec{\omega}}(\ell) 
  \mod 2\,.
\]

Lastly, define the taxicab distance from the origin of the word-counting process at time $t$ to be 
\begin{equation}  
 \label{eq.defDist}
 \text{Dist}(t)   \;=\;  \sum_{\vec{\omega}\in\, OW^{+}(s)\cup OW^{Amp}(s)}|D_{\vec{\omega}}(t)| \,.
\end{equation}
\end{definition}

Observe that $\sum_{\vec{\omega}\in\, OW^{+}(s)\cup OW^{Amp}(s)}|I_{\vec{\omega}}(\ell)|$ equals 1 for every $\ell\in \mathbb{N}$.  
We view the word-counting process
as taking values in a high-dimensional space where each coordinate is indexed by a word in $OW^{+}(s)\cup OW^{Amp}(s)$.

Since the expectation
\[  E(\text{Dist}(t))  =  \sum_{\vec{\omega}\in \,OW^{+}(s)}E|D_{\vec{\omega}}(t)| 
  \,+\,\sum_{\vec{\omega}\in \, OW^{Amp}(s)}E|D_{\vec{\omega}}(t)|, 
\]
our next step in finding an upper bound on $E(\text{Dist}(t))$ in Theorem \ref{thm:taxicab} is to find an upper bound on the expectation $E|D_{\vec{\omega}}(t)|$.

\begin{proposition}
   \label{prop.Jbound}
Let $s$ and $t$ be positive integers, and let $\vec{\omega}\in OW^{Chi}(s)$. The expected absolute value of the displacement from the origin in the $\vec{\omega}$ coordinate of the word-counting process after $t$ steps satisfies 
$E| D_{\vec{\omega}}(t)|   \,\leq \,  2\sqrt{t/2^s}$.   
\end{proposition}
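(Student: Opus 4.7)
The plan is to apply Cauchy--Schwarz, $E|D_{\vec w}(t)| \leq \sqrt{E[D_{\vec w}(t)^2]}$, and reduce the claim to showing $E[D_{\vec w}(t)^2] \leq 4t/2^s$. I would expand the square as $\sum_{\ell=1}^{t} E[I_{\vec w}(\ell)^2] + 2\sum_{1\leq \ell<m\leq t} E[I_{\vec w}(\ell) I_{\vec w}(m)]$ and estimate the diagonal and off-diagonal contributions separately, using the exponential mixing bound in Equation \eqref{eq.pkdiffbound} to control the cross-correlations induced by the Markov chain.

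Write $\vec w = (o,z)$ and $\mirror(\vec w) = (o',z')$, and let $\pi_\ell$ denote the distribution of $X_\ell$. Because $Z_\ell$ is uniform on $W(s)$ and independent of $X_\ell$, direct computation (in the generic case $o\neq o'$) gives
\[
E[I_{\vec w}(\ell)^2] = \frac{\pi_\ell(o)+\pi_\ell(o')}{2^s} \leq \frac{1}{2^s}.
\]
For the off-diagonal terms I would use the Markov property to condition on $X_{\ell+1}$. Setting $\Phi(o)=1/2^s$, $\Phi(o')=-1/2^s$, and $\Phi\equiv 0$ otherwise, one has $E[I_{\vec w}(\ell)\mid X_\ell] = \Phi(X_\ell)$, and the function $g_k(y) := \sum_x (\mathbf{P}^k)(y,x)\Phi(x) = \bigl((\mathbf{P}^k)(y,o)-(\mathbf{P}^k)(y,o')\bigr)/2^s$ satisfies $E[I_{\vec w}(m)\mid X_{\ell+1}=y] = g_{m-\ell-1}(y)$. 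The mixing estimate \eqref{eq.pkdiffbound} then gives $|g_k(y)| \leq 2^{-(k+1)s}$, so
\[
\bigl|E[I_{\vec w}(\ell) I_{\vec w}(m)]\bigr| = \bigl|E[I_{\vec w}(\ell)\, g_{m-\ell-1}(X_{\ell+1})]\bigr| \leq 2^{-(m-\ell)s}\, E|I_{\vec w}(\ell)|.
\]

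Summing the geometric series $\sum_{k=1}^{\infty} 2^{-ks} = 1/(2^s-1)$ over $m>\ell$ and then summing over $\ell$ yields
\[
E[D_{\vec w}(t)^2] \leq \left(1 + \frac{2}{2^s-1}\right)\frac{t}{2^s} = \frac{2^s+1}{2^s-1}\cdot\frac{t}{2^s} \leq \frac{4t}{2^s},
\]
since $(2^s+1)/(2^s-1) \leq 3$ for every $s\geq 1$. The degenerate case $o=o'$ (which can happen for the middle orientation $o_2$) is handled separately: there $g_k \equiv 0$, so the cross terms vanish, and the direct diagonal bound $E[D_{\vec w}(t)^2]\leq 2t/2^s$ already suffices. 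Taking square roots gives the claim. The main obstacle is that the indicators $I_{\vec w}(\ell)$ are not independent, since they share the underlying Markov-chain trajectory $\{X_\ell\}$, so a naive variance computation is unavailable; the exponential mixing in \eqref{eq.pkdiffbound} is precisely what makes the cross-correlations summable and limits the cost of the dependence to a bounded multiplicative factor.
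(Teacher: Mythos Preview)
Your proof is correct and follows essentially the same approach as the paper: bound $E|D_{\vec w}(t)|$ via Cauchy--Schwarz by $\sqrt{E[D_{\vec w}(t)^2]}$, expand the second moment into diagonal and off-diagonal terms, and control the cross-correlations using the exponential mixing bound \eqref{eq.pkdiffbound} so that the geometric series over the time lag converges. The only cosmetic differences are that the paper computes the four probabilities $a(\pm 1,\pm 1)$ explicitly rather than packaging them via your conditional-expectation function $g_k$, and the paper uses the cruder diagonal bound $E[I_{\vec w}(\ell)^2]\le 2\cdot 2^{-s}$ (not distinguishing your ``generic'' and ``degenerate'' cases), arriving at the same final estimate $E[D_{\vec w}(t)^2]\le 4t\,2^{-s}$.
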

\begin{proof}
For our fixed $\vec{\omega}\in OW^{Chi}(s)$, we shall write
 $\vec{\omega}$ as $(o_*,z_*)$ and 
 $\mirror(\vec{\omega})$ as $(\overline{o}_*,\overline{z}_*)$.  
Recall that we write $o_*'$ for the orientation that occurs immediately after $\vec{\omega}$, and then $\overline{o}_*'$ is the orientation that occurs immediately after $\mirror(\vec{\omega})$.

Since $\left(E|D_{\vec{\omega}}(t)|\right)^2 \;\leq \;  E(D_{\vec{\omega}}(t)^2)$ by the Cauchy-Schwarz inequality, our goal shall be to obtain a bound on $E(D_{\vec{\omega}}(t)^2)$.  We use 
\begin{equation}
   \label{eq.JsumI}
     E(D_{\vec{\omega}}(t)^2)  \;=\;  \sum_{\ell=1}^t E(I_{\vec{\omega}}(\ell)^2)  \,+\,  2 \sum_{k=1}^{t-1} \sum_{\ell=1}^{t-k}  E( \,I_{\vec{\omega}}(\ell)\,I_{\vec{\omega}}(\ell+k)\, ) \,.
\end{equation}
We first consider $E(I_{\vec{\omega}}(\ell)^2)$.  Observe that $I_{\vec{\omega}}(\ell)$ equals 0 unless $(X_\ell,Z_\ell)$ is either $(o_*,z_*)$ or $(\overline{o}_*,\overline{z}_*)$, in which 
case $I_{\vec{\omega}}(\ell)^2$ equals 1.
Thus
\begin{eqnarray} 
    \nonumber
    E(I_{\vec{\omega}}(\ell)^2)  &=&  \Pr(X_\ell=o_*,Z_\ell=z_*)+\Pr(X_\ell=\overline{o}_*,Z_\ell=\overline{z}_*)   \\
    \label{eq.I2bound}
  & \leq &    \Pr(Z_\ell=z_*)+\Pr(Z_\ell=\overline{z}_*) \qquad\;=\;  \frac{2}{2^{s}}.
\end{eqnarray}

Next  we consider the term $ E( \,I_{\vec{\omega}}(\ell)\,I_{\vec{\omega}}(\ell+k)\, )$, where $k\geq 1$.
Observe that $I_{\vec{\omega}}(\ell)I_{\vec{\omega}}(\ell+k)$ equals 0 unless $(X_\ell,Z_\ell)$ and $(X_{\ell+k},Z_{\ell+k})$ each is either $(o_*,z_*)$ or $(\overline{o}_*,\overline{z}_*)$, in which 
case $I_{\vec{\omega}}(\ell)I_{\vec{\omega}}(\ell+k)$ equals $1$ or $-1$.
Thus 
\begin{equation}
    \label{eq.Eaaaa}
     E(\,I_{\vec{\omega}}(\ell)\,I_{\vec{\omega}}(\ell+k)\, )   \;=\;  a(1,1) \,-\,a(1,-1) \,+\,a(-1,-1)\,-\,a(-1,1) \,,
\end{equation}
where
\[    a(i,j)   \;=\;  \Pr(I_{\vec{\omega}}(\ell)=i, \,I_{\vec{\omega}}(\ell+k)=j)\,.
\]

We will need to exploit some cancellation here.  Let's start with $a(1,1)-a(1,-1)$.  We have
\[   a(1,1)  = \Pr(X_\ell=o_*,Z_\ell=z_*,X_{\ell+k}=o_*,Z_{\ell+k}=z_*)  
        = u_1 u_2 u_3 u_4\,,   
\]
where
\begin{eqnarray*}
u_1\; & =\; & \Pr(X_\ell=o_*),   \\
u_2 \; & =\; &  \Pr(Z_{\ell}=z_*\,|  \,  X_\ell=o_*)  \\  u_3  \; & =\; &  \Pr(X_{\ell+k}=o_*\,|  \,  X_\ell=o_*,Z_\ell=z_*)   
        \\   
u_4 \; & =\; & \Pr(Z_{\ell+k}=z_*\,|  \,  X_\ell=o_*,Z_\ell=z_*,X_{\ell+k}=o_*)   \,.
\end{eqnarray*}
By the independence of terms of the $Z$ sequence, we have $u_2=u_4 = 2^{-s}$.  
The value of $u_1$ depends on the initial orientation $X_1=o_1$, via
\[   u_1 \;=\;  \Pr(X_\ell=o_*)  \;=\;  
  \mathbf{P}^{\ell-1}(o_1,o_*) \,.
\]
The quantity $u_3$ is a bit more subtle.   Given the information that $X_\ell=o_*$ and $Z_\ell=z_*$, it must also be 
true that $X_{\ell+1}=o_*'$.  Therefore, the Markov property of the $X$ process tells us that 
\[  u_3 \;=\;  \Pr(X_{\ell+k}=o_*\,|  \,  X_{\ell+1}=o_*') \;=\;  (\mathbf{P}^{k-1})(o_*',o_*) \,.
\]
Putting the pieces together shows that 
\[     a(1,1)  \;=\;  (2^{-s})^2 \,\Pr(X_\ell=o_*)  \,(\mathbf{P}^{k-1})(o_*',o_*) \,.
\]
An exactly analogous argument (with the only difference being in the $u_3$ term) shows that 
\begin{eqnarray*}
    a(1,-1)  & = & \Pr(X_\ell=o_*,Z_\ell=z_*,X_{\ell+k}=\overline{o}_*,Z_{\ell+k}=\overline{z}_*)  \\
       & = & (2^{-s})^2 \,\Pr(X_\ell=o_*)  \,(\mathbf{P}^{k-1})(o_*',\overline{o}_*) \,.
\end{eqnarray*}
Then 
\begin{eqnarray}
   \nonumber
   |a(1,1)-a(1,-1)|  &=  & (2^{-s})^2 \,\Pr(X_\ell=o_*)  \,\left| (\mathbf{P}^{k-1})(o_*',o_*) \,-\,   (\mathbf{P}^{k-1})(o_*',\overline{o}_*)\right|
     \\
     \label{eq.adiff1}
     & \leq &  (2^{-s})^2 \cdot 1 \cdot (2^{-s})^{k-1}        \hspace{12mm}\hbox{(by Eq.\ (\ref{eq.pkdiffbound}))}.
\end{eqnarray}
Again, a very similar calculation shows that
\begin{equation}
           \label{eq.adiff2}
    |a(-1,-1)\,-\,a(-1,1)|  \;\leq \;  (2^{-s})^2 \cdot 1 \cdot (2^{-s})^{k-1}.
\end{equation}
Inserting Equations (\ref{eq.adiff1}) and (\ref{eq.adiff2}) into (\ref{eq.Eaaaa}) shows that 
\begin{equation}
   \label{eq.EIIbound}
      \left|   E(\,I_{\vec{\omega}}(\ell)\,I_{\vec{\omega}}(\ell+k)\, ) \right|  \;\leq \;  2 \,(2^{-s})^{k+1} \hspace{5mm} \hbox{for all $k\geq 1$}.
\end{equation}
Finally, using Equations (\ref{eq.JsumI}), (\ref{eq.I2bound}),     and (\ref{eq.EIIbound}), we obtain
\begin{eqnarray}
  \nonumber
   E(D_{\vec{\omega}}(t)^2) & \leq &   2t \,2^{-s} \,+\,  2\sum_{k=1}^{t-1}  (t-k)  \,(2^{-s})^{k+1}
   \\
    \nonumber
   & \leq & 2t  \left(2^{-s}  \,+  \,\frac{(2^{-s})^2}{1-2^{-s}} \right)
   \\
      \label{eq.J2sumbound}
   &\leq & 4t\,2^{-s}   \hspace{10mm}\hbox{(since $2^{-s}\leq 1/2$)}.
\end{eqnarray}
The proposition now follows because $E|D_{\vec{\omega}}(t)| \;\leq \;  \sqrt{E(D_{\vec{\omega}}(t)^2)}$.
\end{proof}

We can now finish proving the main theorem from this subsection.

\begin{proof}[Proof of Theorem \ref{thm:taxicab}]
Proposition \ref{prop.Jbound} is the main ingredient for proving the bound on the expected value of the taxicab distance $\text{Dist}(t)$ as defined in Equation
(\ref{eq.defDist}).  Recall that $a=|OW^{Amp}(s)|$ is the number of oriented words of amphichiral type. 
Then the number of oriented word-mirror pairs is $|OW^{+}(s)|=\frac{1}{2}(3\cdot 2^s-a)$.
Since $|D_{\vec{\omega}}(t)|\leq 1$ when $\vec{\omega}\in OW^{Amp}(s)$,
the expected taxicab distance of the word-counting process after $t$ steps satisfies
\begin{eqnarray*}
  E(\text{Dist}(t)) & = & \sum_{\vec{\omega}\in \,OW^{+}(s)}E|D_{\vec{\omega}}(t)| 
  \,+\,\sum_{\vec{\omega}\in \, OW^{Amp}(s)}E|D_{\vec{\omega}}(t)| 
  \\ &  \leq  & 
    \frac{3\cdot 2^s-a}{2} \left(  2  \sqrt{\frac{t}{2^s}} \right) \,+\, a\cdot 1
    \\  & \leq & 
    3 \sqrt{2^s\,t} \,+\,a\, ,
\end{eqnarray*}
where the second line above follows because $|OW^+(s)|=\frac{3\cdot 2^s-a}{2}$ and $|OW^{Amp}(s)|=a$. 
\end{proof}

Finally in the last subsection, we will analyze all of the contributions to the upper bound of the expected 4-genus.

\subsection{Analysis}  
\label{subsec:analysis}  Our last goal is to determine the upper bound for the expected 4-genus of $\frac{9.75c}{\log c}$ given in Theorem \ref{thm:avg4genus}.  First we consider contributions from $T(2m+1)$ and $T(2m+2)$ together.

\begin{proposition}
    \label{prop:avg4genus2}
    The average $4$-genus over all knots in $T_m=T(2m+1)\cup T(2m+2)$ satisfies the inequality
    \[\frac{\sum_{w\in T_m} g_4(K_w)}{|T_m|} < \frac{4.94c}{\log c},\]
    where the base of the logarithm is 10 and $c=2m+j$ for $j=1$ or $2$.
\end{proposition}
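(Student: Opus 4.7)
My plan is to optimize the upper bound (5.4) from the preceding subsections over the free parameter $s \in \{1, \ldots, 2m-1\}$. By Proposition \ref{prop:bijection}, the uniform distribution on $T_m$ corresponds to the uniform distribution on $\{\sigma_1, \sigma_2^{-1}\}^{2m-1}$, so the average $4$-genus over $T_m$ equals the expected $4$-genus in the random model of Subsection \ref{subsec:random}. Combining Proposition \ref{prop:newii}, the Link Lemma \ref{lem:link}, Theorem \ref{thm:taxicab}, and the bound $p \le 3 \cdot 2^{s/2}$, we obtain
\[
\frac{\sum_{w \in T_m} g_4(K_w)}{|T_m|} \; \le \; (t+1) + \tfrac{3}{2} t + \tfrac{3}{2}(s+3)\sqrt{2^s t} + \tfrac{1}{2}(s+3)\,2^{s/2} + \tfrac{1}{2}(r-1)
\]
for every integer $s$ with $c = st + r$, $1 + j \le r \le s + j$, and $j \in \{1, 2\}$. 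The remaining task is to choose $s = s(c)$ so that the right-hand side is at most $\tfrac{4.94\, c}{\log_{10} c}$.

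The natural choice is $s$ of order $\log_2 c$: balancing the random-walk term against the deterministic cut terms $(t+1) + \tfrac{3}{2} t$, the asymptotic optimum is $s \sim \log_2 c - 3 \log_2 \log_2 c$. With such a choice, $2^s$ is at most $c/(\log_2 c)^3$ up to a bounded factor, $s \sim \log_2 c$, and $t \le c/s$ is of order $c/\log_2 c$, so that $\sqrt{2^s t}$ is of order $c/(\log_2 c)^2$. The dominant random-walk term $\tfrac{3}{2}(s+3)\sqrt{2^s t}$ is then of order $\tfrac{3c}{2 \log_2 c}$, the deterministic cut contributions $(t+1) + \tfrac{3}{2} t$ are of order $\tfrac{5c}{2 \log_2 c}$, and the palindrome term $\tfrac{1}{2}(s+3)\,2^{s/2} = O(\sqrt{c}/\sqrt{\log c})$ and remainder $\tfrac{1}{2}(r-1) = O(\log c)$ are of strictly smaller order. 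Combining and converting $\log_2 c = \log_{10} c / \log_{10} 2$, the asymptotic bound is $\tfrac{4c}{\log_2 c} \approx \tfrac{1.21\, c}{\log_{10} c}$, leaving substantial room beneath the target $\tfrac{4.94\, c}{\log_{10} c}$.

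To extract an explicit inequality valid for every $c \ge 3$, I would fix an integer-valued choice of $s(c)$ that tracks the asymptotic optimum for large $c$ and takes an appropriate value for small $c$, substitute into the displayed upper bound, and estimate each of its five terms using standard inequalities. Small values of $c$ below some explicit threshold can be verified by direct enumeration of $T(c)$, which is feasible since the analogous signature data is already computable from Table \ref{tab:sig}. The main obstacle is numerical bookkeeping rather than conceptual difficulty: one must carefully match the discrete choice of $s$ to the continuous asymptotic optimum, handle floor-function slack and the coarse bound $p \le 3 \cdot 2^{s/2}$ (which can be tightened by restricting $s$ to be odd, so that $p = 0$), and verify that the specific constant $4.94$ holds uniformly across all crossing numbers. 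The gap between the asymptotic constant $\approx 1.21$ and the target $4.94$ strongly suggests that the chief subtlety lies in achieving a clean uniform bound for small and moderate $c$, not in the large-$c$ optimization.
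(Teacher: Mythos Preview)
Your overall plan matches the paper's: start from the upper bound in Expression~\eqref{eq:analysis}, choose $s$ as a function of $c$, and bound each of the five terms by a multiple of $c/\log_{10} c$. Two points, however, separate your sketch from a complete argument and from the paper's execution.

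First, your handling of small $c$ is off. You propose ``direct enumeration of $T(c)$,'' invoking the signature data in Table~\ref{tab:sig}; but signature only gives a \emph{lower} bound on $g_4$, and computing $g_4$ exactly for all knots in $T(c)$ up to some threshold is not feasible. The paper's move here is much simpler: since $g_4(K)<c/2$ for every knot by Inequality~\eqref{eq:crossing}, and since $c/2\le 4.94\,c/\log_{10} c$ whenever $\log_{10} c\le 9.88$, the claimed inequality is automatic for all $c\le 7\times 10^{9}$. No computation of $4$-genera is needed; only sufficiently large $c$ requires analysis of Expression~\eqref{eq:analysis}.

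Second, your choice $s\sim \log_2 c-3\log_2\log_2 c$ is asymptotically sharper (giving roughly $1.2\,c/\log_{10} c$), but it makes the explicit estimates messier. The paper instead takes $s=\lceil \log_{10} c\rceil$, which is exactly tuned to the base of the logarithm in the target bound. With this choice, $10^{s-1}<c\le 10^s$, so the crucial inequality $s^2 2^s<c/\log_{10} c$ reduces to the elementary $2s^3<5^{s-1}$, valid for $s\ge 5$. Each of the five summands in~\eqref{eq:analysis} is then bounded by an explicit multiple of $c/\log_{10} c$ (contributing $2.51$, $2.4$, and two negligible terms), and these add up to the constant $4.94$. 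The paper's choice trades asymptotic optimality for a clean uniform argument; your more refined choice would yield a better constant in principle, but you would still need to carry out the explicit term-by-term estimates and replace the enumeration idea with the trivial bound for small $c$.
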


\begin{proof} Proposition \ref{prop:newii} and Remark \ref{rem:cutting} construct a cobordism of genus at most $t+1$ between $K_w$ and a connected sum $L_1\#\cdots \# L_{t+1}$ of $t+1$ knots and links. In Lemma \ref{lem:link} and Remark \ref{rem:link}, we use the operation $\merge$ to construct a cobordism of genus at most $\frac{3}{2}t+\frac{1}{2}$ between $L_1\#\cdots \# L_{t+1}$ and a connected sum of knots $K_1\#\cdots \#K_{t+1}$. Theorem \ref{thm:taxicab} and Remark \ref{rem:randomwalkdistance} show that the expected $4$-genus of $K_1\#\cdots \#K_{t}$ is at most $\frac{1}{2}(s+3)(3\sqrt{2^s t}+a)$, where $a\leq 2^{\frac{s}{2}}$. Inequality \ref{eq:crossing} implies that the $4$-genus of the last summand $K_{t+1}$ is at most $\frac{1}{2}(r-1)$.

Thus an upper bound for the average 4-genus of $K_w$ where $w\in T_m$ is given by
\begin{equation}
\label{eq:analysis}
(t+1) +\left(\frac{3}{2}t +\frac{1}{2}\right)+\frac{3}{2}{(s+3)}\sqrt{2^st}+\frac{1}{2}{(s+3)}2^{\frac{s}{2}}+\frac{1}{2}{(r-1)},
\end{equation}
where $2m+j=c=st+r$ and $1+j\leq r\leq s+j$.

We give a simpler upper bound for this expression.  To achieve this we let $s$ grow towards infinity by setting $s=\lceil \log c \rceil$ with logarithm base 10.  Then $s-1 < \log c \leq s$ or equivalently, $10^{s-1}< c \leq 10^{s}$. Since $t=\frac{c}{s}-\frac{r}{s}<\frac{c}{s}\leq\frac{c}{\log c}$, it follows that $t<\frac{c}{\log c}$. 

Inequality \ref{eq:crossing} implies that $g_4(K) < \frac{c}{2}$ for any knot $K$, and thus $\frac{\sum_{w\in T_m} g_4(K_w)}{|T_m|} < \frac{c}{2}$. Since $\frac{c}{2}\leq \frac{4.94c}{\log c}$ for all $c\leq 7\times 10^{9}$, we only need to prove our inequality for larger $c$. We make a less extreme assumption on $c$. For the remainder of the proof, suppose $c> 10000$. Then $s\geq 5$ and $t\geq 1999.$

The sum of the terms in the first two parentheses in Expression \eqref{eq:analysis} is  $\frac{5t+3}{2}$, and since $t\geq 1999$, it follows that 
\begin{equation}
\label{eq:CS_LL}
    (t+1)+\left(\frac{3}{2}t+\frac{1}{2}\right)=\frac{5t+3}{2} < \frac{251t}{100} = 2.51 t < \frac{2.51 c}{\log c}.
\end{equation}  

In order to bound the next summand $\frac{3}{2}{(s+3)}\sqrt{2^st}$ in Expression \eqref{eq:analysis}, we first observe that  $2s^3<5^{s-1}$ for ${s}\geq 5$.  Since $c>10^4$ and $s\geq 5$, we have
\[
 s^2 2^{s} = \frac{2s^32^{s-1}}{s} < \frac{5^{s-1}2^{s-1}}{s} = \frac{10^{s-1}}{s}<\frac{c}{\log c}.
\]
Because $t< \frac{c}{\log c}$ and $s+3\leq \frac{8}{5}s$ for $s\geq 5$, it follows that
\begin{equation}
\label{eq:end}
\frac{3}{2}(s+3)\sqrt{2^s t} \leq \frac{12}{5}\;s\sqrt{2^s t}=\frac{12}{5}\sqrt{s^2 2^s t}<\frac{12}{5}\sqrt{\left(\frac{c}{\log c}\right)^2}=\frac{12 c}{5\log c} =\frac{2.4 c}{\log c}.
\end{equation}

The upper bound on the following summand $\frac{1}{2}{(s+3)}2^{\frac{s}{2}}$ of \eqref{eq:analysis} follows from a similar argument as the previous one. Because $s\geq 5$, we have
\[\frac{1}{2}(s+3)2^{\frac{s}{2}} \leq \frac{4}{5} s 2^{\frac{s}{2}} = \frac{4}{5} \sqrt{s^2 2^s} \leq \frac{4}{5} \sqrt{\frac{c}{\log c}}.\]
Since $\sqrt{x}\leq \frac{x}{\sqrt{x_0}}$ for all $x\geq x_0$ and $50<\sqrt{\frac{10,001}{\log 10,001}}$, it follows that
\begin{equation}
\label{eq:atermbound}    
\sqrt{\frac{c}{\log c}} \leq \frac{c}{50\log c} = \frac{0.02c}{\log c}
\end{equation}
for all $c> 10^4$.

For the final summand $\frac{1}{2}{(r-1)}$ of Expression \eqref{eq:analysis}, since $r-1-j\leq s -1$, we have that $r\leq s+j \leq s+2$, and so $\frac{1}{2}(r-1)\leq \frac{s}{2}+\frac{1}{2}$.  Since for $s\geq 4$ we have $50s(s+1)\leq 10^{s-1} < c$, we have that
\begin{equation}
\label{eq:lastsummandbound}    
\frac{r-1}{2}\leq \frac{s+1}{2}\leq \frac{1}{100} \frac{10^{s-1}}{s} < \frac{c}{100\log c} = \frac{0.01c}{\log c}.
\end{equation}

Adding the upper bounds on each of the summands of Expression \eqref{eq:analysis} from Inequalities \eqref{eq:CS_LL} through \eqref{eq:lastsummandbound} yields an overall upper bound of
\[
\frac{\sum_{w\in T_m} g_4(K_w)}{|T_m|}< \frac{2.51c}{\log c}
+
\frac{2.4c}{\log c}
+
\frac{0.02c}{\log c}
+
\frac{0.01c}{\log c}
<
\frac{4.94c}{\log c},
\]
completing the proof.
\end{proof}

We use the bound on the average $4$-genus of knots from $T(2m+1)\cup T(2m+2)$ in Proposition \ref{prop:avg4genus2} to prove Theorem \ref{thm:avg4genus}, providing an explicit sublinear upper bound on the average 4-genus of the set of 2-bridge knots with crossing number $c$.

\begin{proof}[Proof of Theorem \ref{thm:avg4genus}]
We first convert the estimate from Proposition \ref{prop:avg4genus2}, which averages over $T_m=T(2m+1)\cup T(2m+2)$,
into an estimate for a single crossing number $c$.

Let $c=2m+j$, where $j\in\{1,2\}$, and write $c=st+r$ with $1+j\leq r\leq s+j$. In Proposition \ref{prop:avg4genus2}, the only part of the estimate that genuinely uses the average over the two rows $T(2m+1)\cup T(2m+2)$ is Inequality \eqref{eq:end}, which uses the random-walk estimate from Theorem \ref{thm:taxicab}. In particular, Theorem \ref{thm:taxicab} bounds the expected number of
chiral summands not canceled by their mirrors over all of $T_m$ by
$3\sqrt{2^s t}$. Thus the total number of such summands over $T_m$ is
bounded above by
\[
|T_m|\,3\sqrt{2^s t}=2^{2m-1}\,3\sqrt{2^s t}.
\]

We now pass from $T_m$ to a single row $T(c)$. If $c=2m+1$, then, in the
worst case, all of these non-canceling summands occur in $T(2m+1)$. Hence
their expected number over $T(c)$ is bounded above by
\[
\frac{|T_m|\,3\sqrt{2^s t}}{t(2m+1)}
=
\frac{2^{2m-1}\,9\sqrt{2^s t}}{2^{2m-1}+1}
<
9\sqrt{2^s t}.
\]
If $c=2m+2$, then the same worst-case argument gives
\[
\frac{|T_m|\,3\sqrt{2^s t}}{t(2m+2)}
=
\frac{2^{2m-1}\,9\sqrt{2^s t}}{2^{2m}-1}
<
9\sqrt{2^s t}.
\]
Therefore, for either parity of $c$, the expected number of non-canceling
chiral summands in $T(c)$ is at most $9\sqrt{2^s t}$. Since each such summand has at most $s+3$ crossings, Inequality
\eqref{eq:crossing} gives a contribution of at most $\frac{9}{2}(s+3)\sqrt{2^s t}$
to the average 4-genus over $T(c)$. Multiplying Inequality \eqref{eq:end} yields its replacement
\begin{equation}
\label{eq:onec}
\frac{9}{2}(s+3)\sqrt{2^s t} < \frac{36c}{5\log c} = \frac{7.2 c}{\log c}.
\end{equation}

The estimates from Proposition \ref{prop:avg4genus2} for the remaining
terms in Expression \eqref{eq:analysis} do not depend on averaging over both
rows. Thus the upper bounds in Inequalities \eqref{eq:CS_LL}, \eqref{eq:onec}, \eqref{eq:atermbound}, and \eqref{eq:lastsummandbound}   add to give the following upper bound:
\begin{equation}
\label{eq:staticterms}
\frac{\sum_{w\in T(c)} g_4(K_w)}{t(c)}
<
\frac{2.51c}{\log c}
+
\frac{7.2c}{\log c}
+
\frac{0.02c}{\log c}
+
\frac{0.01c}{\log c}
<
\frac{9.74c}{\log c}.
\end{equation}

It remains to pass from the partially double-counted set $T(c)$ to the set
$K(c)$ of 2-bridge knots, where chiral pairs are counted only once. Recall
that a non-palindromic knot is represented by two words in $T(c)$, while a
palindromic knot is represented by only one word. Therefore $2|K(c)|=t(c)+t_p(c)$, and
\begin{align*}\overline{g_4}(c) = &\; \frac{\sum_{w\in T(c)} g_4(K_w) + \sum_{w\in T_p(c)} g_4(K_w)}{2|\mathcal{K}(c)|}\\
= & \; \frac{\sum_{w\in T(c)} g_4(K_w) + \sum_{w\in T_p(c)} g_4(K_w)}{t(c) + t_p(c)}\\
\leq & \;\frac{\sum_{w\in T(c)} g_4(K_w)}{t(c)}  + \frac{\sum_{w\in T_p(c)} g_4(K_w)}{t(c)}\\
< & \; \frac{9.74 c}{\log c} + \frac{c t_p(c)}{2t(c)},
\end{align*}
where the first inequality follows because $t(c)+t_p(c)\geq t(c)$ and the second inequality from both Inequalities \eqref{eq:staticterms} and \eqref{eq:crossing}. 
A computation shows that for all $c\geq 14$,
\[\frac{c t_p(c)}{2t(c)} = \frac{c\left(2^{\left\lfloor\frac{c-1}{2}\right\rfloor}-(-1)^{\left\lfloor\frac{c-1}{2}\right\rfloor}\right)}{2\left(2^{c-2}-(-1)^{c}\right)} < \frac{0.01c}{\log c},\]
implying that $\overline{g_4}(c) < \frac{9.75 c}{\log c}$ when $c\geq 14$.

For $3\leq c<14$, the bound $g_4(K)<c/2$ gives
\[
\overline{g_4}(c)<\frac{c}{2}<\frac{9.75c}{\log c}.
\]
Thus the desired bound holds for all $c\geq 3$. Since $\frac{c}{\log c}$ is sublinear in $c$, the average 4-genus of a $2$-bridge knot is sublinear in the crossing number.
\end{proof}

\begin{remark}
In \cite{CohLow} the first and second authors present the average 3-genus for 2-bridge knots.  This result could be implemented above to reduce the coefficient further, but the error term of the result would make our proof more complicated.
\end{remark}

\newcommand{\etalchar}[1]{$^{#1}$}
\def\cprime{$'$}
\providecommand{\bysame}{\leavevmode\hbox to3em{\hrulefill}\thinspace}


\begin{thebibliography}{CDLRRSW23}

\bibitem[BH11]{BjoHar}
Michael Bj\"{o}rklund and Tobias Hartnick, \emph{Biharmonic functions on groups
  and limit theorems for quasimorphisms along random walks}, Geom. Topol.
  \textbf{15} (2011), no.~1, 123--143.

\bibitem[BJ07]{BriJen}
Mark Brittenham and Jacqueline Jensen, \emph{Families of knots for which
  {M}orton's inequality is strict}, Comm. Anal. Geom. \textbf{15} (2007),
  no.~5, 971--983.

\bibitem[BKLMR25]{BKLMR}
Sebastian Baader, Alexandra Kjuchukova, Lukas Lewark, Filip Misev, and Arunima
  Ray, \emph{Average four-genus of two-bridge knots}, Proc. Amer. Math. Soc. 1\textbf{53} (2025), no. 1, 357--362.

\bibitem[BCDFM{\etalchar{+}}26]{BCDFMST}
Cody Baker, Moshe Cohen, Henry Dam, Rebecca Felber, Neal Madras, Ritvik Saha, and Daisy Thackrah, \emph{A central limit theorem for the signatures of 2-bridge knots}, arXiv:2604.21107, 2026.

\bibitem[BM19]{BakMot}
Kenneth~L. Baker and Kimihiko Motegi, \emph{Twist families of {L}-space knots,
  their genera, and {S}eifert surgeries}, Comm. Anal. Geom. \textbf{27} (2019),
  no.~4, 743--790.

\bibitem[CDLRR{\etalchar{+}}24]{CohLowURSI}
Moshe Cohen, Abigail DiNardo, Adam~M. Lowrance, Steven Raanes, Izabella~M.
  Rivera, Andrew~J. Steindl, and Ella~S. Wanebo, \emph{The distribution of
  genera of 2-bridge knots}, Fund. Math. \textbf{267} (2024), no. 3. 195--229.

\bibitem[CEZK18]{CoEZKr}
Moshe Cohen, Chaim Even-Zohar, and Sunder~Ram Krishnan, \emph{Crossing numbers
  of random two-bridge knots}, Topology Appl. \textbf{247} (2018), 100--114.

\bibitem[CFL24]{LowURSI:braids}
Tobias Clark, Jeremy Frank, and Adam~M. Lowrance, \emph{The distribution of
  braid indices of 2-bridge knots}, to appear in J. Knot Theory Ramifications, arXiv:2401.08388, 2024.

\bibitem[CK15]{CoKr}
Moshe Cohen and Sunder~Ram Krishnan, \emph{Random knots using {C}hebyshev
  billiard table diagrams}, Topol. Appl. \textbf{194} (2015), 4--21.

\bibitem[CKLSVC25]{CohKinLowShaVan}
Moshe Cohen, Thomas Kindred, Adam~M. Lowrance, Patrick~D. Shanahan, and
  Cornelia~A. Van~Cott, \emph{Average crosscap number of a 2-bridge knot}, to appear in Algebr. Geom. Topol. arXiv:2501.03099.

\bibitem[CL24]{CohLow}
Moshe Cohen and Adam Lowrance, \emph{The average genus of a 2-bridge knot is
  asymptotically linear}, New York J. Math. \textbf{30} (2024), 1029--1055.

\bibitem[Coh21]{Co:3-bridge}
Moshe Cohen, \emph{The {J}ones polynomials of three-bridge knots via
  {C}hebyshev knots and billiard table diagrams}, J. Knot Theory Ramifications
  \textbf{30} (2021), no.~13, Paper No. 2141006, 29 pp.

\bibitem[Coh23]{Coh:lower}
\bysame, \emph{A lower bound on the average genus of a 2-bridge knot}, J. Knot
  Theory Ramifications \textbf{32} (2023), no.~9, Paper No. 2350055, 21 pp.

\bibitem[DET03]{DiaErnThi}
Y.~Diao, C.~Ernst, and M.~Thistlethwaite, \emph{The linear growth in the
  lengths of a family of thick knots}, J. Knot Theory Ramifications \textbf{12}
  (2003), no.~5, 709--715.

\bibitem[DL11]{DasLow:sig}
Oliver~T. Dasbach and Adam~M. Lowrance, \emph{Turaev genus, knot signature, and
  the knot homology concordance invariants}, Proc. Amer. Math. Soc.
  \textbf{139} (2011), no.~7, 2631--2645.

\bibitem[DL18]{DasLow:gT1}
\bysame, \emph{Invariants for {T}uraev genus one links}, Comm. Anal. Geom.
  \textbf{26} (2018), no.~5, 1103--1126.

\bibitem[DT25]{DunTio}
Nathan~M. Dunfield and Giulio Tiozzo, \emph{Roots of {A}lexander polynomials of random positive 3-braids},
Adv. Math. 471 (2025), Paper No. 110254, 53 pp.

\bibitem[DVBBZ{\etalchar{+}}21]{machinelearning}
Alex Davies, Petar Veli\v{c}kovi\'{c}, Lars Buesing, Sam Blackwell, Daniel Zheng,
  Nenad Toma\v{s}ev, Richard Tanburn, Peter Battaglia, Charles Blundell, Andr\'as
  Juh\'asz, Marc Lackenby, Geordie Williamson, Demis Hassabis, and Pushmeet
  Kohli, \emph{Advancing mathematics by guiding human intuition with {AI}},
  Nature (2021), no.~600, 70--74.

\bibitem[ES87]{ErnSum}
Claus Ernst and De~Witt Sumners, \emph{The growth of the number of prime
  knots}, Math. Proc. Cambridge Philos. Soc. \textbf{102} (1987), no.~2,
  303--315.

\bibitem[EZ17]{EZ:survey}
Chaim Even-Zohar, \emph{Models of random knots}, J. Appl. Comput. Topol.
  \textbf{1} (2017), no.~2, 263--296.

\bibitem[EZHLN16]{EZHLN}
Chaim Even-Zohar, Joel Hass, Nati Linial, and Tahl Nowik, \emph{Invariants of
  random knots and links}, Discrete Comput. Geom. \textbf{56} (2016), no.~2,
  274--314.

\bibitem[FPS23]{FucPurSte}
Urs Fuchs, Jessica~S. Purcell, and John Stewart, \emph{Constructing knots with
  specified geometric limits}, Pacific J. Math. \textbf{324} (2023), no.~1,
  111--142.

\bibitem[GJ15]{GalJab}
Michael Gallaspy and Stanislav Jabuka, \emph{The {G}oeritz matrix and signature
  of a two bridge knot}, Rocky Mountain J. Math. \textbf{45} (2015), no.~4,
  1119--1148.

\bibitem[GL78]{GorLit}
C.~McA. Gordon and R.~A. Litherland, \emph{On the signature of a link}, Invent.
  Math. \textbf{47} (1978), no.~1, 53--69.

\bibitem[Hir15]{Hirschhorn}
Michael~D. Hirschhorn, \emph{Wallis's product and the central binomial
  coefficient}, Amer. Math. Monthly \textbf{122} (2015), no.~7, 689.

\bibitem[Hom22]{Hom:slice}
Jennifer Hom, \emph{Getting a handle on the {C}onway knot}, Bull. Amer. Math.
  Soc. (N.S.) \textbf{59} (2022), no.~1, 19--29.

\bibitem[Hug20]{Hug:ml}
Mark~C. Hughes, \emph{A neural network approach to predicting and computing
  knot invariants}, J. Knot Theory Ramifications \textbf{29} (2020), no.~3,
  2050005, 20.

\bibitem[KP11]{KosPec4}
P.V. Koseleff and D. Pecker, \emph{Chebyshev diagrams for two-bridge knots}, Geom. Dedicata. \textbf{150} (2011), 405--425.

\bibitem[LEP13]{LudEvaPaa}
Lewis~D. Ludwig, Erica~L. Evans, and Joseph~S. Paat, \emph{An infinite family
  of knots whose mosaic number is realized in non-reduced projections}, J. Knot
  Theory Ramifications \textbf{22} (2013), no.~7, 1350036, 11.

\bibitem[Lev23]{Lev:slice}
Adam~Simon Levine, \emph{A note on rationally slice knots}, New York J. Math.
  \textbf{29} (2023), 1363--1372.

\bibitem[LHS22]{LevHajSaz}
Jesse S.~F. Levitt, Mustafa Hajij, and Radmila Sazdanovic, \emph{Big data
  approaches to knot theory: understanding the structure of the {J}ones
  polynomial}, J. Knot Theory Ramifications \textbf{31} (2022), no.~13, Paper
  No. 2250095, 20.

\bibitem[Low15]{LowAltDist}
Adam~M. Lowrance, \emph{Alternating distances of knots and links}, Topology
  Appl. \textbf{182} (2015), 53--70.

\bibitem[McC12]{McC}
Ben McCarty, \emph{Cube number can detect chirality and {L}egendrian type of
  knots}, J. Knot Theory Ramifications \textbf{21} (2012), no.~3, 1250026, 9.

\bibitem[Mis21]{Mis}
Filip Misev, \emph{On families of fibred knots with equal {S}eifert forms},
  Comm. Anal. Geom. \textbf{29} (2021), no.~2, 465--482.

\bibitem[Mur65]{Mur:4genus}
Kunio Murasugi, \emph{On a certain numerical invariant of link types}, Trans.
  Amer. Math. Soc. \textbf{117} (1965), 387--422. \MR{171275}

\bibitem[OEI21]{OEIS1045}
\emph{The {O}n-{L}ine {E}ncyclopedia of {I}nteger {S}equences},
  http://oeis.org, 2021, Sequence A001045.

\bibitem[OEI24]{OEIS013580}
\emph{The {O}n-{L}ine {E}ncyclopedia of {I}nteger {S}equences},
  http://oeis.org, 2024, Sequence A013580.

\bibitem[OT24]{OwaTsv}
Nicholas Owad and Anastasiia Tsvietkova, \emph{Random meander model for links},
Discrete Comput. Geom. 72 (2024), no. 4, 1417--1436.

\bibitem[Pic20]{Pic:slice}
Lisa Piccirillo, \emph{The {C}onway knot is not slice}, Ann. of Math. (2)
  \textbf{191} (2020), no.~2, 581--591.

\bibitem[QADQ14]{QazAlDQur}
Khaled Qazaqzeh, Isam Al-Darabsah, and Aisheh Quraan, \emph{The signature of
  rational links}, New York J. Math. \textbf{20} (2014), 183--194.

\bibitem[RD23]{RayDiao}
Dawn Ray and Yuanan Diao, \emph{The average genus of oriented rational links
  with a given crossing number}, J. Knot Theory Ramifications \textbf{32} (2023), no. 2, Paper No. 2350010, 14 pp.

\bibitem[Sch56]{Sch}
Horst Schubert, \emph{Knoten mit zwei {B}r\"ucken}, Math. Z. \textbf{65} (1956), 133--170.
  
\bibitem[ST24]{SuzTra:genus}
Masaaki Suzuki and Anh~T. Tran, \emph{Genera and crossing numbers of 2-bridge knots}, Fund. Math. 264 (2024), no. 2, 179--195.

\bibitem[ST24]{SuzTra:braids}
\bysame, \emph{On the braid index of a two-bridge knot}, J. Knot Theory Ramifications 33 (2024), no. 6, Paper No. 2450018, 22 pp.

\bibitem[SV22]{SorVil}
Marc Soret and Marina Ville, \emph{Random walks and the symplectic
  representation of the braid group}, arXiv:2203.00984, 2022.

\bibitem[Tra04]{Tra}
Pawe{\l} Traczyk, \emph{A combinatorial formula for the signature of
  alternating diagrams}, Fund. Math. \textbf{184} (2004), 311--316.

\bibitem[Tro62]{Trotter}
H.~F. Trotter, \emph{Homology of group systems with applications to knot
  theory}, Ann. of Math. (2) \textbf{76} (1962), 464--498.

\bibitem[Wal56]{Wallis}
J.~Wallis, \emph{Arithmetica infinitorum}, 1656.

\end{thebibliography}
\end{document}